\numberwithin{equation}{section}
\theoremstyle{definition}
\newtheorem{defi}{Definition}[section]
\newtheorem*{nota*}{Notation}
\theoremstyle{plain}
\newtheorem{lemm}[defi]{Lemma}
\newtheorem{prop}[defi]{Proposition}
\newtheorem{theo}[defi]{Theorem}
\newtheorem*{theo*}{Theorem}
\newtheorem{coro}[defi]{Corollary}
\newtheorem{conj}[defi]{Conjecture}
\theoremstyle{remark}
\newtheorem{rema}[defi]{Remark}
\newtheorem*{rema*}{Remark}
\newtheorem{exam}[defi]{Example}
\newtheorem*{exam*}{Example}
\renewcommand{\subset}{\subseteq}
\renewcommand{\supset}{\supseteq}
\newcommand{\cC}{\mathcal{C}}
\newcommand{\cN}{\mathcal{N}}
\newcommand{\cD}{\mathcal{D}}
\newcommand{\bbZ}{\mathbb{Z}}
\newcommand{\bbQ}{\mathbb{Q}}
\newcommand{\de}{\delta}
\newcommand{\De}{\Delta}
\newcommand{\la}{\lambda}
\newcommand{\varep}{\varepsilon}
\newcommand{\vep}{\varepsilon}
\newcommand{\DS}{\displaystyle}
\newcommand{\epcl}{\mathcal{E}}
\newcommand{\cl}{\epcl}
\newcommand{\lra}{\longrightarrow}
\newcommand{\Lra}{\Longrightarrow}
\newcommand{\Llra}{\Longleftrightarrow}
\newcommand{\sm}{\setminus}
\newcommand{\relmiddle}[1]{\mathrel{}\middle#1\mathrel{}}
\renewcommand{\l}{\ell}
\newcommand{\len}{\ell}
\newcommand{\Na}[1]{\textup{\textsf{(N{#1})}}}
\newcommand{\Naa}[1]{\textup{\textsf{(N{#1}$'$)}}}
\newcommand{\Naak}{\textup{\textsf{(N{6}$'_k$)}}}
\newcommand{\Pia}[1]{\textup{\textsf{(P{#1}$_j$)}}}
\newcommand{\PTsim}{\overset{\text{\textup{PT}}}{\sim}}
\newcommand{\simPT}{\PTsim}
\newcommand{\Par}{\mathsf{Par}}
\newcommand{\hPar}{\widehat{\Par}}
\newcommand{\Parr}[1]{\Par_{#1}}
\DeclareMathOperator{\twt}{wt}
\DeclareMathOperator{\id}{id}
\newcommand{\rsh}{\phi_{+}}
\newcommand{\lsh}{\phi_{-}}
\newcommand{\pst}{\bullet}
\newcommand{\AL}{\Sigma}
\newcommand{\AUT}{M}
\newcommand{\FB}{X}
\newcommand{\emptypar}{\boldsymbol{\emptyset}}
\DeclareMathOperator{\Base}{\mathsf{minimal}}
\newcommand{\OF}{V}
\newcommand{\sums}[1]{\sum_{\substack{#1}}}
\newcommand{\Sinf}{\mathsf{Seq}}
\newcommand{\Seq}{\mathsf{Seq}}
\newcommand{\spa}{s}
\newcommand{\Avoid}[3]{\mathsf{avoid}(#1,#2,#3)}
\newcommand{\vi}{\vec{i}}
\newcommand{\transpose}[1]{\vphantom{#1}^{t}\hspace{-.5mm}#1}
\newcommand{\hatt}{\widehat}
\newcommand{\comp}[1]{{#1}^{\wedge}}
\newcommand{\refEuler}{\textup{(A)}}
\newcommand{\refEulerr}{\textup{(B)}}
\newcommand{\RR}{Rogers--Ramanujan}
\newcommand{\RRs}{RR}
\title{A proof of conjectured partition identities of Nandi}
\date{\today}
\author{Motoki Takigiku}
\address{Graduate School of Natural Science and Technology, Okayama University, Okayama 700-8530, Japan}
\email{takigiku@okayama-u.ac.jp}
\author{Shunsuke Tsuchioka}
\address{Department of Mathematical and Computing Sciences, Tokyo Institute of Technology, Tokyo 152-8551, Japan}
\email{tshun@kurims.kyoto-u.ac.jp}
\begin{document}
\maketitle
\begin{abstract}
We generalize the theory of linked partition ideals due to Andrews
    using finite automata in formal language theory and apply it to
    prove three Rogers--Ramanujan type identities of modulo 14
    that were posed by Nandi through vertex operator theoretic construction of
    the level 4 standard modules of the affine Lie algebra $A^{(2)}_{2}$.
\end{abstract}

\section{Introduction}\label{sect:intro}

\subsection{\RR{} type identities}\label{sect:intro:intro}

A partition of an integer $n\ge 0$ is 
a weakly decreasing sequence of positive integers (called parts) whose sum is $n$.
Let $\Par$ denote the set of all partitions.

The celebrated \emph{Rogers--Ramanujan 
identities} are stated as
\begin{equation}\label{eq:RR:PT}
    \begin{minipage}{.90\textwidth}
        ``partitions of $n$ 
        whose parts are $\ge i$ and mutually differ by at least 2
        are equinumerous to partitions of $n$ into parts $\equiv \pm i\ (\bmod\,5)$''
    \end{minipage}
\end{equation}
(for $i=1,2$) in the
combinatorial form,
and their $q$-series versions are
\begin{equation}\label{eq:RR:q}
    \sum_{n\ge 0} \frac{q^{n^2}}{(q;q)_n}
    = \frac{1}{(q,q^4;q^5)_\infty},
    \quad
    \sum_{n\ge 0} \frac{q^{n^2+n}}{(q;q)_n}
    = \frac{1}{(q^2,q^3;q^5)_\infty},
\end{equation}
where we write
for $n\in\bbZ_{\ge 0} \sqcup \{\infty\}$
\[
    (a;q)_n := \prod_{0\le j<n} (1-aq^j),
    \quad
    (a_1,\dots,a_k;q)_{n}
    := (a_1;q)_n \cdots (a_k;q)_n.
\]

These identities \eqref{eq:RR:PT} and \eqref{eq:RR:q}
have
a number of generalizations,
often called \emph{Rogers--Ramanujan type} 
(\emph{\RRs{} type} for short)
identities,
arising from various motivations (see e.g.,\,\cite{MR0557013}*{\S7}, \cite{MR3752624}).
In particular,
generalizations of \eqref{eq:RR:PT}
are 
called 
RR type partition identities,
which are theorems of the form $\cC\simPT\cD$ for $\cC, \cD\subset\Par$ 
(\cite{MR387178}*{Definition 3}), meaning that
partitions of $n$ in $\cC$ are equinumerous to those in $\cD$ for all $n\ge0$,
where most commonly $\cC$ (resp. $\cD$) is given by 
``difference conditions'' (resp. ``mod conditions'') on parts.

\subsection{Algorithmic  derivation of $q$-difference equations}
\label{sect:intro:auto_qdiff}
A common strategy to prove
an \RRs{} type partition identity
is as follows (cf.\,\cite{MR387178}*{p.1037}):
starting with the set $\cC\subset\Par$ 
(given by ``difference conditions'' in the statement of the identity),

\begin{itemize}
    \item [(Step 1)] 
        Find a $q$-difference equation for 
        the generating function
        \begin{equation}\label{eq:defi:f_C}
            f_\cC(x,q) := \sum_{\la\in\cC} x^{\len(\la)}q^{|\la|},
        \end{equation}
where
$|\la|:=\sum_{i=1}^{\ell} \la_i\,(=\sum_{i\geq 1}i\,m_i(\lambda))$ and
$\len(\la):=\ell\,(=\sum_{i\geq 1}m_i(\lambda))$
for
$\la=(\la_1,\la_2,\cdots,\la_\ell)\in\Par$ and $m_i(\la) := \#\{j\mid \la_j=i\}$ for $i\ge 1$.

    \item [(Step 2)]
        Solve the equation and find a $q$-series expression for 
        $f_{\cC}(1,q)=\sum_{\la\in\cC}q^{|\la|}$.
    \item [(Step 3)]
        Use $q$-series formulas 
        and show that $f_{\cC}(1,q)$ is equal to the desired infinite product
        corresponding to $\cD$.
\end{itemize}

The aim of this paper is to give a proof (following these steps) for
three conjectural \RRs{} type partition identities 
(see \S\ref{sect:intro:statement})
posed by Nandi \cite{NandiPhD}
and, to do so,
give an extention
(by using \emph{finite automata} in \emph{formal language theory})
of the theory of \emph{linked partition ideals}
introduced by Andrews \cite{MR387178},\cite{MR0557013}*{\S 8},
which provides in many cases an algorithmic derivation in the Step 1 above (see \S\ref{sect:intro:outline} for more details).

\subsection{Nandi's conjectures}
\label{sect:intro:statement}

The \RR{} identities 
was one of the motivations for inventing
vertex operators;
it started from Lepowsky--Milne's observation \cite{MR501091},
which led to Lepowsky--Wilson's proof for \RR{} identities
\cites{MR638674,MR663415,MR752821}
by constructing
bases of
the vacuum spaces $\Omega(V(\la))$ for
the standard modules $V(\la)$ of
the affine Lie algebra $A^{(1)}_{1}$
associated with the level 3 dominant integral weights $\la$,
using certain vertex operators called \emph{$Z$-operators}.
Moreover, 
Andrews--Gordon's \cites{MR351985,MR123484}
and Andrews--Bressoud's \cites{MR541344,MR556608}
generalizations of the \RR{} identities
can be interpreted and proved 
via similar constructions for the level $\ge4$ standard modules 
of $A^{(1)}_{1}$\cites{MR752821,MR782227,MR888628}.

It is therefore natural to expect that
there should exist an \RRs{} type identity
corresponding to
any given affine Lie type and a dominant integral weight.
As a first step 
beyond the case $A^{(1)}_{1}$, 
Capparelli \cite{MR2637254} investigated 
the structure of the level 3 standard modules of
the affine Lie algebra $A^{(2)}_{2}$ via $Z$-operators,
yielding some conjectural partition identities
(which were later proved by
\cites{MR1284057,MR1333389,MR1364151,1912.03689,2006.02630}
etc.).
As a next step, Nandi \cite{NandiPhD} studied the level 4 standard modules of $A_{2}^{(2)}$ via $Z$-operators
and conjectured some partition identities (Conjecture \ref{theo:Nandi_conj}).
For higher levels, see e.g.,\,\cite{MR3773946},\cite{2006.02630}*{\S1.4}.

\begin{defi}\label{defi:match}
    For a finite sequence
    $\vec{j}=(j_1,\dots,j_n)$ 
    (which we assume to be nonempty for simplicity, i.e., $n>0$) and
    a (finite or infinite) sequence 
    $\vec{i}=(i_1,\dots,i_N)$ or
    $\vec{i}=(i_1,i_2,\dots)$,
    we say that,
    letting $\mathsf{len}(\vi):=N(\geq 0)$ or $\infty$ respectively, 
    \begin{itemize}
        \item 
        $\vec{i}$ 
        \emph{matches} $\vec{j}$ if
        $(i_{k+1},i_{k+2},\dots,i_{k+n})=(j_1,j_2,\dots,j_n)$ 
        for some $0\le k\le \mathsf{len}(\vi)-n$,
        \item
        $\vec{i}$ \emph{begins with} $\vec{j}$ if
        $n\le \mathsf{len}(\vi)$ and
        $(i_1,i_2,\dots,i_{n})=(j_1,j_2,\dots,j_n)$.
    \end{itemize}
\end{defi}

\begin{conj}[Nandi {\cite{NandiPhD}*{\S 8.1}. See also \cite{MR3752624}*{Conjecture 5.5, 5.6, 5.7}}]\label{theo:Nandi_conj}
Let $\cN$ denote the set of partitions $\la$ satisfying the conditions \Na{1}-\Na{6}:
\begin{itemize}
    \item[\Na{1}] For all $1\le i\le \len(\la)-1$, $\la_i - \la_{i+1} \neq 1$,
    \item[\Na{2}] For all $1\le i\le \len(\la)-2$, $\la_i - \la_{i+2} \ge 3$,
    \item[\Na{3}] For all $1\le i\le \len(\la)-2$, $\la_i - \la_{i+2} = 3 \implies \la_i \neq \la_{i+1}$,
    \item[\Na{4}] For all $1\le i\le \len(\la)-2$, 
        $\la_i - \la_{i+2} = 3 \text{ and } 2\nmid \la_i 
            \implies \la_{i+1} \neq \la_{i+2}$,
    \item[\Na{5}] For all $1\le i\le \len(\la)-2$, \\
        $\la_i - \la_{i+2} = 4 \text{ and } 2\nmid \la_i 
            \implies \la_{i} \neq \la_{i+1} \text{ and } \la_{i+1}\neq \la_{i+2}$,
    \item[\Na{6}] 
        $(\la_1-\la_2, \la_2-\la_3,\cdots,\la_{\ell(\la)-1}-\la_{\ell(\la)})$
        does not match $(3,2^*,3,0)$.
        Here $2^*$ denotes any number (possibly zero) of repetitions of $2$.
\end{itemize}
Define $\cN_1,\cN_2,\cN_3\subset\cN$ by
\begin{align*}
    \cN_1 &= \{\la\in\cN\mid m_1(\la)=0\}, \\
    \cN_2 &= \{\la\in\cN\mid m_i(\la)\le 1 \text{ for } i=1,2,3\}, \\
    \cN_3 &= \left\{\la\in\cN \relmiddle|
        \begin{gathered}
            m_1(\la)=m_3(\la)=0, \ m_2(\la)\le 1, \\
            \text{
                $\la$ does not match $(2k+3,2k,2k-2,\cdots,4,2)$
                for any $k\ge1$
            }
        \end{gathered}
    \right\}.
\end{align*}
    Then 
    \begin{align}
        \cN_1 &\PTsim T^{(14)}_{2,3,4,10,11,12}, \quad
        \cN_2 \PTsim T^{(14)}_{1,4,6,8,10,13}, \quad
        \cN_3 \PTsim T^{(14)}_{2,5,6,8,9,12}.
    \end{align}
    Here $T^{(N)}_{a_1,\cdots,a_k}$
    denotes the set of partitions with parts $\equiv a_1,\dots,a_k \pmod{N}$.
\end{conj}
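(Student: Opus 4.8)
The plan is to follow the three-step program of \S\ref{sect:intro:auto_qdiff}, using the automaton-theoretic machinery of this paper to mechanize the otherwise \emph{ad hoc} Step 1. The first task is to recognize that every defining condition \Na{1}--\Na{6} is \emph{local} or \emph{regular}: conditions \Na{1}--\Na{5} inspect only windows of two or three consecutive parts together with the parity of a part, while the forbidden configuration $(3,2^*,3,0)$ in \Na{6}, though of unbounded length, is a regular pattern in the difference sequence because the $2^*$ is a Kleene star. Accordingly I would encode a partition as a word over a finite alphabet -- reading the parts from the smallest upward and recording at each step the gap to the next part together with enough parity data -- and build a finite automaton $\AUT$ whose accepted language is exactly the set of words coding partitions in $\cN$; enforcing \Na{6} costs only a bounded number of extra states tracking how much of $(3,2^*,3,0)$ has been matched so far. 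The three subfamilies $\cN_1,\cN_2,\cN_3$ are then cut out by constraints on the smallest parts, which, since we read from the bottom, amount to restricting the \emph{initial} transitions of $\AUT$. In particular the unbounded excluded pattern $(2k+3,2k,\dots,4,2)$ defining $\cN_3$ becomes a \emph{begins-with} condition -- using that $\cN_3$ forbids $1$'s and more than one $2$, so such a run, if present, must occupy the very bottom -- and is again regular. Thus all three sets arise from a single automaton with three choices of boundary data.

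Second, I would convert $\AUT$ into a system of $q$-difference equations, following the transfer-matrix philosophy behind the extension of linked partition ideals. Introducing for each state $\spa$ a generating function $g_\spa(x,q)$ that enumerates by $x^{\len}q^{|\cdot|}$ the partitions whose coding word ends at $\spa$, the labelled transitions of $\AUT$ -- each of which appends a part, hence contributes a monomial factor and shifts the remaining $q$-powers -- translate into a linear vector equation of the schematic shape $\vec g(x)=A(x,q)\,\vec g(xq)$, where $A(x,q)$ is read directly off the transition graph. This is precisely the mechanized Step 1 promised in the introduction.

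Third, for Steps 2 and 3 I would solve this system. Eliminating the auxiliary components yields a single scalar $q$-difference equation for $F(x):=f_{\cN_i}(x,q)$; I would then guess a closed form for $F$ as an Andrews--Gordon-type multisum, suggested by the shape of the recurrence and by the anticipated mod-$14$ product, and verify that the guess satisfies both the equation and the boundary condition, the verification reducing to telescoping and $q$-binomial manipulations. Setting $x=1$ puts $f_{\cN_i}(1,q)$ in $q$-series form, and Step 3 is completed by matching it with the infinite product attached to $T^{(14)}_{\dots}$ via classical tools such as the Jacobi triple product and standard product rearrangements; here the fact that the three right-hand sides are principally specialized characters of the level $4$ standard $A^{(2)}_{2}$-modules is a useful guide to the target products.

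The hard part will be Step 2: both extracting a manageable scalar $q$-difference equation from the necessarily sizable automaton produced by \Na{1}--\Na{6}, and then pinning down and rigorously verifying the correct closed-form solution. The interaction of the local conditions \Na{2}--\Na{5} with the long-range pattern condition \Na{6} makes the state set -- and hence the order of the resulting recurrence -- large, so I expect the bulk of the work, and the genuine risk, to lie in the solving-and-verifying stage rather than in the automaton construction or the final product identification.
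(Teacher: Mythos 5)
Your overall architecture coincides with the paper's: encode partitions in $\cN$ over a finite alphabet so that \Na{1}--\Na{5} become bounded-window forbidden patterns, \Na{6} a Kleene-star pattern, and the extra conditions cutting out $\cN_1,\cN_2,\cN_3$ become forbidden \emph{prefixes} (your observation that the excluded run $(2k+3,2k,\dots,4,2)$ in $\cN_3$ must sit at the bottom of the partition, hence becomes a begins-with condition, is exactly Proposition \ref{theo:nandi:M'}, where it appears as the prefix language $\{1\}^*\{03\}$); then a transfer-matrix system read off the DFA, followed by elimination (the Murray--Miller algorithm) to a scalar $q$-difference equation. However, your encoding as literally stated fails: reading parts from the smallest upward and recording ``the gap to the next part'' does not give a finite alphabet, because \Na{2} bounds differences only from below --- consecutive parts may differ by arbitrarily much, and the smallest part itself is unbounded. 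Moreover, a transition labelled by a gap $d$ does not contribute a fixed monomial, so you would not obtain a system of the clean shape $\vec{g}(x)=A(x,q)\,\vec{g}(xq)$. The paper's fix is to encode successive \emph{levels} rather than parts: two consecutive entries of the multiplicity vector at a time (alphabet $I=\{0,1,2,3,4\}$ with $\pi_0=\emptypar$, modulus $m=2$ chosen so the parity in \Na{4}, \Na{5} is visible), so that a large gap is just a string of empty letters $0$ and every transition contributes the fixed monomial $x^{\ell(\pi_a)}q^{|\pi_a|}$ with the uniform substitution $x\mapsto xq^2$ (Theorem \ref{theo:reso:cor}).

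For Steps 2--3 there is a second, more substantive gap. You propose to guess an Andrews--Gordon-type multisum solution, verify it against the recurrence, and finish with ``Jacobi triple product and standard product rearrangements.'' The paper does not guess: it collapses the five-term, degree-$q^{10}$ scalar equation to a \emph{first-order} recurrence by a chain of substitutions (dividing by $(x;q^2)_\infty$, renormalizing coefficients by $(-q;q)_{2M}$, multiplying back by $(x;q^2)_\infty$), solves that recurrence trivially, and then extracts $f_{\cN_a}(1,q)$ via Appell's comparison theorem (Lemma \ref{theo:Appell}) --- note that naive specialization $x=1$ is delicate because of the $(1-x)$ factor. More importantly, the resulting single sums equal the mod-$14$ products only via Slater's identities (117)--(119), which are genuine Rogers--Ramanujan-type results (resting on Bailey-pair machinery), not consequences of the Jacobi triple product plus rearrangements; JTP converts products to theta sums but cannot by itself produce these sum-to-product identities. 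As written, your Step 3 names no mechanism capable of supplying this input, so the final identification is a real gap --- consistent with your own assessment that the solving-and-matching stage is where the risk lies.
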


In the present article
we prove Conjecture \ref{theo:Nandi_conj}.
We also
give corresponding 
$q$-series
identities (like \eqref{eq:RR:q}),
which are missing in Conjecture \ref{theo:Nandi_conj}.
For $a=1,2,3$ we consider a double sum
\begin{equation}\label{eq:asum}
    N_a := 
        \sum_{i,j\ge 0}
        \frac{(-1)^j q^{\binom{i}{2} + 2\binom{j}{2} + 2ij + A_a(i,j)}}
             {(q;q)_i (q^2;q^2)_j},
\end{equation}
where 
$A_1(i,j) = i+j$,
$A_2(i,j) = i+3j$
and
$A_3(i,j) = 2i+3j$.
\begin{theo}\label{theo:asum=prod}
    We have
\begin{alignat}{2}
    \sum_{\la\in\cN_1} q^{|\la|} 
    &= 
    \frac{1}{(q^{2},q^{3},q^{4},q^{10},q^{11},q^{12};q^{14})_\infty}
    &&=
    N_1,
    \label{eq:asum=prod:1} 
    \\
    \sum_{\la\in\cN_2} q^{|\la|} 
    &= 
    \frac{1}{(q^{},q^{4},q^{6},q^{8},q^{10},q^{13};q^{14})_\infty}
    &&=
    N_2,
    \label{eq:asum=prod:2} 
    \\
    \sum_{\la\in\cN_3} q^{|\la|} 
    &= 
    \frac{1}{(q^{2},q^{5},q^{6},q^{8},q^{9},q^{12};q^{14})_\infty}
    &&=
    N_3.
    \label{eq:asum=prod:3}
\end{alignat}
\end{theo}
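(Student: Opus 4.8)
The statement packages, for each $a\in\{1,2,3\}$, two equalities: the outer one $\sum_{\la\in\cN_a}q^{|\la|}=\frac1{(q^{a_1},\dots,q^{a_6};q^{14})_\infty}$ is exactly the combinatorial identity $\cN_a\PTsim T^{(14)}_{a_1,\dots,a_6}$ of Conjecture~\ref{theo:Nandi_conj}, since the infinite product is the generating function $\sum_{\la\in T^{(14)}_{a_1,\dots,a_6}}q^{|\la|}$; the inner one $\frac1{(q^{a_1},\dots,q^{a_6};q^{14})_\infty}=N_a$ is a self-contained $q$-series identity. The plan is to prove $\sum_{\la\in\cN_a}q^{|\la|}=N_a$ by the three-step method of \S\ref{sect:intro:auto_qdiff}, with the double sum $N_a$ emerging as the solution in Step~2, and then to prove $N_a=\frac1{(q^{a_1},\dots,q^{a_6};q^{14})_\infty}$ by classical $q$-series manipulation (Step~3); together these force all three quantities to be equal.

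For Step~1 I would encode each $\cN_a$ by a \emph{finite automaton} whose states record just enough recent history of the part-sequence (together with the relevant parities) to test the defining conditions. Conditions \Na{1}--\Na{5} constrain the successive differences $\la_i-\la_{i+1}$, $\la_i-\la_{i+2}$ and the parities of the parts, all within a window of bounded width, so they are enforced by finitely many states; the forbidden pattern \Na{6}, a match with $(3,2^{\ast},3,0)$ of \emph{unbounded} length, is absorbed by a single looping state that records ``currently inside a run of $2$'s following an initial $3$'' (and likewise for the extra forbidden pattern defining $\cN_3$). Attaching to each state $s$ the generating function $f_s(x,q)$ of the partitions terminating in $s$, the linked-partition-ideal machinery then yields a coupled system of $q$-difference equations in which the act of enlarging a partition (shifting all parts up and inserting new smallest parts) sends the argument $x$ to $xq$ and mixes the states, with $f_{\cN_a}(x,q)$ a designated combination of the $f_s$.

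Step~2 is to solve this system. Eliminating the auxiliary $f_s$ reduces it to a single functional equation for $F(x):=f_{\cN_a}(x,q)$ of the shape $F(x)=\sum_k P_k(x,q)\,F(xq^{e_k})$ with polynomial coefficients, which I would solve by the ansatz $F(x)=\sum_{i,j\ge0}c_{i,j}\,x^{\alpha i+\beta j}q^{\phi(i,j)}$. Matching the recurrence forces the quadratic form $\phi(i,j)=\binom{i}{2}+2\binom{j}{2}+2ij+A_a(i,j)$, the sign $(-1)^j$, and the denominators $(q;q)_i(q^2;q^2)_j$ appearing in \eqref{eq:asum}; the two indices reflect two independent building blocks, the factor $(q^2;q^2)_j$ and the alternating sign signalling that the $j$-block contributes in pairs carrying an inclusion--exclusion coming from the forbidden patterns. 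Specializing $x=1$ then gives $\sum_{\la\in\cN_a}q^{|\la|}=N_a$.

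Finally, Step~3 identifies $N_a$ with the modulus-$14$ product. Here I would carry out one of the two summations in closed form---summing over $i$ for fixed $j$ by Euler's identities or the $q$-binomial theorem, or symmetrically---to collapse the double sum to a theta-type single sum, and then apply the Jacobi triple product to produce the six-factor infinite product, pinning down the residues $a_1,\dots,a_6\pmod{14}$; equivalently one recognizes $N_a$ through a Bailey pair attached to modulus $14$. I expect the genuine obstacle to lie in Steps~1--2: building the finite automaton that faithfully captures \emph{all} of \Na{1}--\Na{6} (especially the unbounded forbidden patterns) and extracting from it a system whose solution is the clean double sum \eqref{eq:asum}, then verifying the ansatz against both the $q$-difference equation and its boundary conditions---delicate because of the mixed moduli $q$ and $q^2$ and the alternating sign. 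Step~3, by contrast, should be comparatively routine once the correct classical identity is located.
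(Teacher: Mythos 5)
Your Step~1 is essentially the paper's own route: \S\ref{sect:nandi:DFA} encodes partitions by length-$2$ blocks of multiplicities over $I=\{0,1,2,3,4\}$, builds the minimal DFA whose loop states play exactly the role of your ``looping state'' for \Na{6}, and extracts the system \eqref{eq:nandi:sysqdiff} and then the single equation \eqref{eq:qd:F} via the Murray--Miller algorithm. One terminological caveat: calling this ``the linked-partition-ideal machinery'' is off --- the paper stresses that $\cN$ is \emph{not} an LPI (the unbounded pattern in \Na{6} breaks the definition), and the whole point is the regularly-linked generalization; your automaton with a loop \emph{is} that generalization, so the substance is right even if the label is wrong (also the shift is $x\mapsto xq^2$, modulus $2$, not $x\mapsto xq$).

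The genuine gaps are in Steps~2--3. First, your Step~2 ansatz --- that $f_{\cN_a}(x,q)$ is itself an $x$-graded double sum whose $x=1$ specialization is $N_a$, with the quadratic form ``forced'' by matching --- is unsubstantiated and almost certainly false. In the paper's solution the double sum emerges only \emph{after} setting $x=1$: a chain of three transformations ($F_a\mapsto G_a=F_a/(x;q^2)_\infty$, then $h_M=g_M/(-q^{1+s};q)_{2M}$, then $I_a=H_a\cdot(x;q^2)_\infty$) reduces \eqref{eq:qd:F} to a \emph{first-order} coefficient recurrence; the resulting coefficients \eqref{eq:g_M} involve ratios $(-q^{1+s};q)_{2L}/(-q^{1+s};q)_{2M}$, mixed-base products whose index depends on the $x$-degree and which do not fit a denominator pattern $(q;q)_i(q^2;q^2)_j$ at generic $x$. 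Moreover, evaluating at $x=1$ is itself a nontrivial step requiring Appell's comparison theorem (Lemma \ref{theo:Appell}), since $F_a(x)=(1-x)(xq^2;q^2)_\infty G_a(x)$; only then does the second Euler identity convert the single sum \eqref{eq:F(1):1sum} into $N_a$. Second, your primary Step~3 plan --- collapse one summation and finish with the Jacobi triple product --- cannot work: summing over either index of $N_a$ leaves a single-sum Rogers--Ramanujan-type series (namely \eqref{eq:F(1):1sum}, or the form $(q;q^2)_\infty\sum_i q^{\binom{i}{2}+(1+s)i}/\bigl((q;q)_i(q;q^2)_{i+t}\bigr)$ in the paper's closing Remark), not a theta series, and its product evaluation is exactly Slater's identities (117)--(119) (equivalently (80)--(82)) as in \eqref{eq:Slater:mod28} --- genuinely nontrivial known identities, not triple-product manipulations. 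Your parenthetical mention of Bailey pairs points at the right territory (that is how Slater proved them), but ``comparatively routine'' misjudges where the classical input sits: identifying and invoking those three specific Slater identities is the substantive content of Step~3, while your hardest-step prediction (the automaton) is the part that is algorithmic.
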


Obviously, the left equalities in
Theorem \ref{theo:asum=prod} imply Conjecture \ref{theo:Nandi_conj}.

\subsection{Linked partition ideals and regularly linked sets}
\label{sect:intro:outline}

As mentioned above,
a common technique for achieving Step 1 
(in \S\ref{sect:intro:auto_qdiff}) is to use 
\emph{linked partition ideals} (\emph{LPI} for short)
of Andrews
\cite{MR387178},\cite{MR0557013}*{\S 8},
which we review in Appendix \ref{sect:appe:LPI}.
Roughly speaking, it is a subset $\cC\subset\Par$
whose elements can be encoded to infinite sequences (on a certain finite set)
in which certain (finite length) patterns are forbidden to appear.
Theorem \ref{theo:LPI} below is a main result of \cite{MR387178},
and this is applicable for most of known \RRs{} type identities.
\begin{theo}[\cite{MR387178}*{Theorem 4.1}, \cite{MR0557013}*{Theorem 8.11}]
    \label{theo:LPI}
    If $\cC\subset\Par$ is an LPI,
    then one can algorithmically obtain a $q$-difference equation for $f_\cC(x,q)$.
\end{theo}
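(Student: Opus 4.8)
The plan is to turn the \emph{finite-pattern} description of an LPI into a finite \emph{transfer matrix} and then read off the $q$-difference equation by linear elimination. The key point is that membership $\la\in\cC$ is decided by forbidding finitely many patterns, each of bounded length, among the encoded parts of $\la$. Reading a partition from its smallest part upward, it therefore suffices to remember only a bounded window of ``recent history'' in order to test every forbidden pattern; since the patterns have bounded length, there are only finitely many relevant histories, giving a finite set of states $S=\{s_1,\dots,s_r\}$. For each $s\in S$ let $\cC_s\subseteq\cC$ be the set of partitions compatible with the history $s$, and introduce the refined generating function
\[
  f_s(x,q):=\sum_{\la\in\cC_s} x^{\len(\la)}q^{|\la|},
\]
so that $f_\cC(x,q)$ is a fixed $\bbZ$-linear combination of the $f_s$.

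The heart of the argument is a \emph{peel-and-shift} recursion. Let $m$ denote the modulus attached to the LPI. Given $\la\in\cC_s$, I would split off its bottom block $B$ (the smallest admissible generating pattern compatible with $s$) and subtract $m$ from every remaining part to obtain a partition $\mu$. Because the forbidden patterns are local, $\mu$ again lies in $\cC$, compatible with a state $s'=s'(B)$ dictated by the linking conditions, and $\la\mapsto(B,\mu)$ is a weight-controlled bijection: from $\len(\la)=\len(B)+\len(\mu)$ and $|\la|=|B|+|\mu|+m\,\len(\mu)$ one gets
\[
  x^{\len(\la)}q^{|\la|}=x^{\len(B)}q^{|B|}\cdot(xq^{m})^{\len(\mu)}q^{|\mu|}.
\]
Summing the remainder over all $\mu$ compatible with $s'$ reproduces $f_{s'}(xq^{m},q)$, and summing over the finitely many admissible bottom blocks yields the finite linear system
\[
  f_s(x,q)=b_s(x,q)+\sum_{s'\in S}T_{s,s'}(x,q)\,f_{s'}(xq^{m},q)\qquad(s\in S),
\]
where each $T_{s,s'}(x,q)$ and each $b_s(x,q)$ is an explicit polynomial in $x,q$ (a sum of the monomials $x^{\len(B)}q^{|B|}$ contributed by the admissible blocks). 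In matrix form this is a first-order linear $q$-difference system $\mathbf F(x)=\mathbf b(x)+T(x)\,\mathbf F(xq^{m})$ with transfer matrix $T$.

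Finally I would eliminate the auxiliary functions. Over the field $K=\bbQ(q)(x)$ equipped with the $q$-shift $\sigma\colon x\mapsto xq^{m}$, the displayed relations form a first-order linear $q$-difference system $\mathbf F(x)=\mathbf b(x)+T(x)\,\mathbf F(xq^{m})$ of finite rank $r$. Iterating it finitely many times and eliminating the auxiliary components $\sigma^{k}f_s$ by Gaussian elimination over $K$ leaves a single scalar linear $q$-difference equation relating $f_\cC(x,q)$ to its $\sigma$-shifts $f_\cC(xq^{jm},q)$. Enumerating the states, listing the admissible bottom blocks, and carrying out this elimination are all effective, which yields the asserted algorithm.

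I expect the genuine difficulty to lie not in the final linear algebra, which is routine once the system is assembled, but in setting up the peel-and-shift decomposition correctly. One must verify that finitely many states truly suffice---this is precisely the regularity of the language encoding $\cC$---and that bottom-block removal is a weight-preserving bijection whose image is cut out \emph{exactly} by the transition rules $T_{s,s'}$. The subtlety is that subtracting $m$ can create or destroy a forbidden pattern straddling the boundary between $B$ and $\mu$, so the successor state $s'$ must retain enough information about $B$ to forbid precisely these boundary patterns; arranging this bookkeeping so that no partition is double-counted and none is omitted is the crux of the proof.
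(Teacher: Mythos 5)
Your proposal follows essentially the same route as the paper: your bounded-memory states are the states of the finite automaton built from the (finite, for an LPI) forbidden patterns (cf.\ Proposition \ref{exam:LPI}), your peel-and-shift recursion with weight factor $x^{\len(B)}q^{|B|}\cdot(xq^m)^{\len(\mu)}q^{|\mu|}$ is exactly the block decomposition of Lemma \ref{theo:lpi:block} combined with \eqref{eq:wt_ji}, and your transfer-matrix system is the first-order system \eqref{eq:reso:sysqdiff} of Theorem \ref{theo:reso:cor}. The one step you treat too casually is the final elimination---naive Gaussian elimination on a possibly singular transfer matrix can return the trivial equation $0=0$, and guaranteeing a nontrivial scalar equation is precisely the role of the Modified Murray--Miller algorithm of Appendix \ref{sect:MurrayMiller} that the paper invokes---but since that is a cited, effective ingredient rather than a missing idea, your outline is sound.
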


It is natural to hope to apply this to Nandi's conjectures,
but one can prove that
the set $\cN$ (and $\cN_a$ for $a=1,2,3$) is not an LPI.
Roughly speaking, this is because
while elements of $\cN$ can be encoded to certain infinite sequences 
(Proposition \ref{theo:nandi:lpi:pre}),
there are arbitrarily long forbidden patterns
which originally come from the condition \Na{6}.
Hence the theory of LPIs is not applicable in an obvious way.
However, we can still derive a $q$-difference equation
for $f_{\cN}(x,q)$ (and $f_{\cN_a}(x,q)$ for $a=1,2,3$ as well).
We show this 
in a generalized and algorithmic manner
in \S\ref{sect:automaton},
and apply it to Nandi's conjectures
in \S\ref{sect:nandi:DFA}.

In \S\ref{sect:automaton}
we extend the theory of LPIs using \emph{finite automata};
we consider a class of subsets $\cC\subset\Par$ such that,
roughly speaking,
the elements of $\cC$ can be encoded to infinite sequences (on a certain finite set)
in which certain patterns given by a \emph{regular language} 
(in the sense of \emph{formal language theory}; see Definition \ref{defi:DFA})
are forbidden to appear,
and we say such $\cC$ is \emph{regularly linked} (Definition \ref{defi:regularWPI}).
This notion generalizes LPIs (Proposition \ref{exam:LPI}),
and we show that $\cN$ and $\cN_a$ ($a=1,2,3$)
are regularly linked (Example \ref{exam:nandi:reg}).

\begin{theo}[Theorem \ref{theo:reso:cor} $+$ Appendix \ref{sect:MurrayMiller}] \label{theo:main}
If a subset $\cC\subset\Par$ is regularly linked,
then one can algorithmically obtain a 
$q$-difference equation for $f_{\cC}(x,q)$.
\end{theo}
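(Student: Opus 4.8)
The plan is to realize the elements of $\cC$ as infinite walks in a finite automaton, to read off from that automaton a finite coupled system of $q$-difference equations (one per state), and then to collapse the system to a single scalar equation for $f_{\cC}(x,q)$ by linear elimination. Throughout, the two tasks ``build the system'' and ``eliminate'' are both effective, which is what the word \emph{algorithmically} in the statement demands.

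First I would make the ``regularly linked'' hypothesis concrete. By Definition \ref{defi:regularWPI} the partitions in $\cC$ are encoded bijectively as infinite sequences over a finite alphabet $\AL$ in which every factor belonging to a fixed regular language is forbidden. Since the class of regular languages is closed under the operation ``avoid all factors in a given regular language'' — the safe sequences are accepted by a finite automaton obtained from a \textup{DFA} for the forbidden language by a product/subset construction (Definition \ref{defi:DFA}) — I can algorithmically build a \textup{DFA} $M=(Q,\AL,\delta,q_0)$ whose (one-sided infinite) safe walks are exactly the valid encodings. This is precisely the step that replaces Andrews' requirement of \emph{finitely many} forbidden patterns: the memory needed to detect even the arbitrarily long patterns coming from \Na{6} is bounded, and it is absorbed once and for all into the finite state set $Q$.

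Next, for each state $s\in Q$ I would introduce a generating function $F_s(x,q)$ summing $x^{\len(\la)}q^{|\la|}$ over the partitions whose encoding is a safe walk starting at $s$. Decomposing such a walk by its first transition $s\xrightarrow{\,a\,}s'$, and using the linking (grading) structure of the encoding — namely that deleting the first block rescales all remaining parts, which on generating functions is the dilation $x\mapsto xq^{c_a}$ for an explicit exponent $c_a$ attached to the transition — yields a finite coupled system
\begin{equation*}
    F_s(x,q)\;=\;\sum_{s\xrightarrow{\,a\,}s'} w_a(x,q)\,F_{s'}(xq^{c_a},q),
\end{equation*}
with monomial (hence polynomial) weights $w_a$. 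This system is the content of Theorem \ref{theo:reso:cor}, and it exhibits $f_{\cC}(x,q)$ as a distinguished combination of the $F_s$ (up to normalization, the component attached to $q_0$). To conclude, I would eliminate the auxiliary functions: regarding the $F_s$ as $n=|Q|$ vectors over the rational function field $\bbQ(q)(x)$, iterating the system shows that every dilate $f_{\cC}(x,q),f_{\cC}(xq^{c},q),f_{\cC}(xq^{2c},q),\dots$ lies in the span of $\{F_s\}_{s\in Q}$; hence the first $n+1$ of them are linearly dependent, and clearing denominators produces a scalar $q$-difference equation for $f_{\cC}$ of order at most $n$. Organizing the various dilations $xq^{c_a}$ into a single $q$-dilation operator, guaranteeing polynomial coefficients, and certifying that the dependence is nontrivial is exactly the elimination packaged in Appendix \ref{sect:MurrayMiller}.

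I expect the main obstacle to be the \emph{set-up} rather than the elimination. The final linear-algebra collapse is routine once the system is in hand; the delicate point is constructing the decorated automaton so that the first-block decomposition is consistent with a single dilation structure. One must argue that the finite-state avoidance of the forbidden language can be refined to carry exactly the bounded amount of memory the grading needs — no more, or the state set (and hence the order of the equation) blows up; no less, or the weights $w_a$ and exponents $c_a$ fail to be well defined on transitions. Verifying this compatibility, and checking that the resulting coefficient matrix is nondegenerate over $\bbQ(q)(x)$ so that no spurious identity $0=0$ results, is where the real work lies.
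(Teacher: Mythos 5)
Your proposal matches the paper's route for the first half almost exactly. The coupled system is precisely Theorem \ref{theo:reso:cor}: one builds effectively (via Proposition \ref{prop:DFA:1}) a DFA $M=(Q,I,\delta,s,F)$ recognizing the bad words $I^*XI^*\cup X'I^*$, attaches a generating function $f_{\cC^{(v)}}(x,q)$ to each state $v\in Q\sm F$, and decomposes by the first transition (Lemma \ref{theo:L(Mv):rec}) to obtain the system \eqref{eq:reso:sysqdiff}. However, the ``delicate compatibility'' you single out as the main obstacle is a non-issue in the paper's framework: Definition \ref{defi:regularWPI} fixes a single modulus $m$ once and for all, each letter $a\in I$ encodes a block $\pi_a\in\Par_{\le m}$, and so every transition carries the monomial weight $x^{\ell(\pi_a)}q^{|\pi_a|}$ and the \emph{same} dilation $x\mapsto xq^m$, by \eqref{eq:wt_ji}. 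Your transition-dependent exponents $c_a$, and the attendant worry about refining the automaton to make the grading well defined on transitions, never arise; the paper's observation is exactly that once the modulus is built into the encoding, the whole first step reduces to the ``almost trivial'' Lemma \ref{theo:L(Mv):rec}.

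For the elimination you diverge from the paper, and your argument as stated has a direction error. From $F_v(x)=\sum_a w_a(x)F_{\delta(v,a)}(xq^m)$ one can only express $F_v(x)$ in terms of \emph{higher} dilates; your claim that every dilate of $f_{\cC}$ lies in the span of $\{F_v(x)\}_{v}$ would require inverting the coefficient matrix $P(x)$ over $\bbQ(x,q)$, which is not guaranteed by the construction (and would have to be checked even in the concrete system \eqref{eq:nandi:sysqdiff}). The slip is repairable by iterating \emph{forward} to a common dilate: with $n=|Q\sm F|$ and $\mathbf{F}(x)$ the column vector of the $F_v$, one has $f_{\cC}(xq^{jm},q)=u_j(x)\,\mathbf{F}(xq^{(n+1)m})$ for $0\le j\le n$, where $u_j(x):=e_s^{T}P(xq^{jm})P(xq^{(j+1)m})\cdots P(xq^{nm})$; the $n+1$ row vectors $u_0,\dots,u_n\in\bbQ(x,q)^n$ are linearly dependent, any nonzero dependence is computable by Gaussian elimination, and clearing denominators gives a nontrivial scalar $q$-difference equation of order at most $n$ --- with the nondegeneracy you worried about automatic, since the dependence coefficients are not all zero. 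The paper instead runs the Modified Murray--Miller algorithm (Appendix \ref{sect:MurrayMiller}): a structured elimination that brings $P$ to the companion-like form \eqref{eq:sysqdiff:s,l} and then eliminates $F'_{\ell'},\dots,F'_2$ successively, without any invertibility hypothesis. Your (repaired) route is shorter and gives a clean a priori order bound; the paper's buys an explicit, deterministic computation that produces the small fifth-order equations of Proposition \ref{theo:qd} actually needed for Step 2.
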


As an application of the main result above,
in \S\ref{sect:nandi:DFA} we get
a $q$-difference equation for 
$f_{\cN_a}(x,q)$ (for $a=1,2,3$) automatically (Proposition \ref{theo:qd}),
finishing Step 1 for Nandi's conjectures.
We solve these equations in \S\ref{sect:psum},
finishing Step 2. 
The technique used there
seems to be common in dealing with such equations;
indeed, the flow of \S\ref{sect:psum} is similar to 
\cite{MR225741},
\cite{1809.06089}*{Proposition 2.2, Proposition 2.3}, etc.
Finally, Step 3 
is done (also in \S\ref{sect:psum})
by employing three identities of Slater \cite{MR0049225}.

As we see in \S\ref{sect:auto:theo}, 
once Theorem \ref{theo:main} is expressed in terms of finite automata
its key part (Theorem \ref{theo:reso:cor}) is proved immediately from
an almost trivial lemma (Lemma \ref{theo:L(Mv):rec}).
Nevertheless, its application to a concrete problem can be nontrivial
(such as Proposition \ref{theo:qd}) and 
this generalization of LPIs seems worth writing down the details
as it works well in solving Nandi's conjectures.
We hope that the regulary linked sets would be widely used as a method of algorithmic derivation of $q$-difference equations in
the theory 
of partitions 
like the WZ method 
in hypergeometric summations.

\subsection*{Organization of the paper}

In \S\ref{sect:LPI}
we rephrase the defining conditions for $\cN$
as certain forbidden patterns and prefixes
on a certain finite set.
In \S\ref{sect:relang} 
we recall 
standard
definitions and facts
in
formal language theory
(some details are put in Appendix \ref{sect:appe:FLT}).
In \S\ref{sect:regularWPI}
we define regularly linked sets
and in \S\ref{sect:auto:theo}
show Theorem \ref{theo:main}.
In \S\ref{sect:nandi:DFA}
we obtain $q$-difference equations
for $f_{\cN_a}(x,q)$ ($a=1,2,3$)
using 
the results in 
\S\ref{sect:auto:theo}
(we also need
the Modified Murray--Miller Theorem
reviewed in Appendix \ref{sect:MurrayMiller},
which is given in \cite{MR387178}
and constitutes the final step in
Theorem \ref{theo:LPI} (and Theorem \ref{theo:main}).
We apply it explicitly
in Appendix \ref{sect:qd:proof}).
In \S\ref{sect:psum}
we solve these equations,
proving Theorem \ref{theo:asum=prod}.
In Appendix \ref{sect:appe:minFP}
we give supplementary results
regarding Theorem \ref{theo:main}.
In Appendix \ref{sect:appe:LPI}
we review LPIs and compare it to our results.

\section{Nandi's partitions $\cN$}\label{sect:LPI}
\subsection{Multiplicity vectors}
\label{sect:Par}

A partition $\la\in\Par$ can be identified with 
its multiplicity vector $(f_i)_{i\ge 1}$, 
where $f_i=m_i(\la)$.
By this, we have a bijection
\begin{equation}\label{eq:ParPar}
\widehat{\phantom{a}}:\Par\stackrel{\sim}{\longrightarrow}
    \hPar:=\left\{(f_i)_{i\ge1} 
        \in(\bbZ_{\ge 0})^{\bbZ_{\ge1}} \relmiddle| \#\{i\ge1\mid f_i>0\} < \infty \right\},
\end{equation}
and we denote 
the image in $\hatt\Par$ (via this bijection) of 
$\la\in\Par$ and $\cC\subset\Par$, say, 
by $\hatt\la$ and $\hatt\cC$.
It is easy to see
for any $\la\in\Par$
and
$k,d\ge 1$ that
\begin{equation}\label{eq:diffcond}
    1\le \forall i\le \ell(\la)-k,\,\la_i-\la_{i+k}\ge d 
    \iff \forall j\ge 1,\,f_j+\cdots+f_{j+d-1}\le k.
\end{equation}

\begin{lemm}\label{theo:Naa}
The set $\hatt\cN$
consists of $(f_i)_{i\ge 1}$ satisfying
\Naa{1}-\Naa{4}, \Naa{5a}, \Naa{5b} and \Naak{} \textup{(}for all $k\ge0)$.
Here, 
for $i=1,\dots,4,\text{\textup{5a}},\text{\textup{5b}}$,
the condition \Naa{$i$} is given by
\begin{itemize}
    \item[\Naa{$i$}$:$] there are no $j\ge 1$ such that \Pia{$i$}, 
    where 
\begin{itemize}
    \item[\Pia{1}$:$] 
        $(f_j,f_{j+1}) = (\ge 1,\ge 1)$,
    \item[\Pia{2}$:$] 
        $f_j + f_{j+1} + f_{j+2} \ge 3$,
    \item[\Pia{3}$:$] 
        $(f_{j},f_{j+1},f_{j+2},f_{j+3}) = (\ge 1,0,0,\ge 2)$,
    \item[\Pia{4}$:$] 
    \mbox{$(f_{2j},f_{2j+1},f_{2j+2},f_{2j+3}) = (\ge 2,0,0,\ge 1)$},
    \item[\Pia{5a}$:$] 
        $(f_{2j-1},f_{2j},f_{2j+1},f_{2j+2},f_{2j+3}) = \text{\mbox{$(\ge2,0,0,0,\ge1)$}}$,
    \item[\Pia{5b}$:$] 
        $(f_{2j-1},f_{2j},f_{2j+1},f_{2j+2},f_{2j+3}) = \text{\mbox{$(\ge 1,0,0,0,\ge 2)$}}$,
\end{itemize}
\end{itemize}
and the condition \Naak{} $(k\ge0)$ is given by
\begin{itemize}
    \item[\Naak$:$] there are no $j\ge 1$ such that \\
        $(f_{j},f_{j+1},\cdots,f_{j+2k+6}) = (\ge 2,0,0,1,\underbrace{0,1,0,\cdots,1,0,1}_{2k},0,0,\ge 1)$.
\end{itemize}
Here, 
for $n\ge 2$,
we wrote $(x_1,x_2,\dots,x_{n-1},x_n)=(\ge y_1,y_2,\dots,y_{n-1},\ge y_n)$
to mean $x_1\ge y_1$,
$x_i=y_i$ \textup{(}for $2\le i\le n-1$\textup{)} and
$x_n\ge y_n$.
\end{lemm}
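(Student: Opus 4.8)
The plan is to verify, one condition at a time, that a partition $\la$ satisfies \Na{$i$} if and only if its multiplicity vector $\hatt\la=(f_i)_{i\ge1}$ satisfies the stated reformulation; the lemma then follows by taking the conjunction over all conditions. Throughout I use only the elementary observation that in a weakly decreasing sequence all parts of a given value are consecutive, so $f_v\ge1$ exactly when $v$ occurs as a part, and a difference $\la_i-\la_{i+1}$ records either $0$ (when $\la_i=\la_{i+1}$) or the gap from $\la_i$ down to the next smaller occurring value. Because $\cN$ is the intersection of the conditions \Na{1}--\Na{6}, I may freely invoke earlier equivalences when treating later ones, although in fact each equivalence below holds on its own.

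The conditions \Na{1} and \Na{2} are immediate. Two adjacent parts differ by exactly $1$ precisely when some $v$ and $v+1$ both occur, i.e. $f_v\ge1$ and $f_{v+1}\ge1$; forbidding this is exactly \Naa{1}. For \Na{2}, the assertion $\la_i-\la_{i+2}\ge3$ for all $i$ is, by \eqref{eq:diffcond} with $k=2$ and $d=3$, equivalent to $f_j+f_{j+1}+f_{j+2}\le2$ for all $j$, which is the negation of \Pia{2}, that is \Naa{2}.

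For \Na{3}--\Na{5} I would argue by a short case analysis of the two gaps $\la_i-\la_{i+1}$ and $\la_{i+1}-\la_{i+2}$. A violation of \Na{3} means $\la_i-\la_{i+2}=3$ with $\la_i=\la_{i+1}$, which forces the gap pattern $(0,3)$: two adjacent copies of a value $m$ followed immediately by $m-3$, reading $(f_{m-3},f_{m-2},f_{m-1},f_m)=(\ge1,0,0,\ge2)$, exactly \Pia{3}. A violation of \Na{4} forces instead $(3,0)$ with $m=\la_i$ odd, giving $(f_{m-3},\dots,f_m)=(\ge2,0,0,\ge1)$ with $m-3$ even; writing $m-3=2j$ this is \Pia{4}. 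Similarly, \Na{5} with $\la_i-\la_{i+2}=4$ and $m=\la_i$ odd forbids exactly the gap patterns $(0,4)$ and $(4,0)$, which translate to the length-five patterns $(\ge1,0,0,0,\ge2)$ and $(\ge2,0,0,0,\ge1)$ beginning at the odd value $m-4$, namely \Pia{5b} and \Pia{5a}. In each case the intermediate zeros are forced by contiguity of equal parts, and the parity of the starting index encodes the hypothesis $2\nmid\la_i$.

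The main obstacle is \Na{6}, since the forbidden difference pattern $(3,2^*,3,0)$ has unbounded length and must be matched against the unbounded family \Naak{} $(k\ge0)$. Here I would show, for each fixed $k\ge0$, that the difference sequence of $\la$ matches $(3,2^k,3,0)$ (that is, $3$ followed by exactly $k$ copies of $2$, then $3$, then $0$) if and only if \Naak{} fails. Spelling out the parts forced by such a match, starting from $a=\la_i$ one obtains the consecutive parts $a,\,a-3,\,a-5,\dots,a-3-2k,\,a-6-2k,\,a-6-2k$; since every difference in this block except the last is nonzero, each intermediate value occurs exactly once, the smallest value $b:=a-6-2k$ occurs at least twice, and $a$ occurs at least once, while contiguity forces all interleaving values to be absent. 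Reading off the multiplicities on the value-window $[b,\,b+2k+6]$ gives precisely $(f_b,\dots,f_{b+2k+6})=(\ge2,0,0,1,\underbrace{0,1,\dots,0,1}_{2k},0,0,\ge1)$, i.e. the pattern of \Naak{} with $j=b$; conversely, this multiplicity pattern reconstructs the same block of parts and hence the difference subsequence $(3,2^k,3,0)$. Taking the disjunction over $k\ge0$ yields \Na{6} $\iff$ \Naak{} for all $k\ge0$, and combining with the previous equivalences completes the proof. The bookkeeping to watch is the index translation between the length-$(k+3)$ difference pattern and the length-$(2k+7)$ multiplicity window, and the verification that the forced zeros account for every non-listed entry.
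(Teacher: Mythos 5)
Your proposal is correct and follows essentially the same route as the paper: condition-by-condition translation, with \Na{2} handled via \eqref{eq:diffcond} and \Na{6} matched against each \Naak{} by reconstructing the block of parts forced by the difference pattern $(3,2^k,3,0)$. You simply spell out the case analyses and index bookkeeping that the paper's proof leaves implicit, and all of these details check out.
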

\begin{proof}
  It is clear that \Na{1}$\Llra$\Naa{1}.
    That \Na{2}$\Llra$\Naa{2} is a special case of 
    \eqref{eq:diffcond}.
    The condition \Na{3} is equivalent to that 
    $\la$ 
    does not match
    $(j+3,j+3,j)$ for $j\ge 1$,
    which is precisely \Naa{3}.
    Similarly we have
    $\Na{4}\Llra\Naa{4}$ and
    $\Na{5}\Llra\Naa{5a},\Naa{5b}$.
    For \Na{6}, 
    the condition
    $\Naak$
    is equivalent to that
    $(\la_1-\la_2,\dots,\la_{\l(\la)-1}-\la_{\l(\la)})$ does not match 
    $(3,2^k,3,0)$.
\end{proof}

\subsection{Encoding $\cN$ into infinite sequences}
We write
\begin{alignat}{2}
    f_{\le m} &:= (f_1,\dots,f_m,0,0,\dots), &\quad
    \la_{\le m} &:= (\la_{\l'+1},\dots,\la_{\l(\la)})
\end{alignat}
for $m>0$ and $f=(f_i)_{i\ge1}\in\hPar$, $\la\in\Par$, 
where $\l':=\#\{i\ge 1\mid \la_i>m\}$.
It clearly holds $\widehat{\la_{\le m}}=f_{\le m}$ when $\widehat{\lambda}=f$.
Furthermore, for $\cC\subset\Par$ we write
\begin{align*}
    \cC_{\le m}&:=\{\la\in\cC\mid\la=\la_{\le m}\}.
\end{align*}

\begin{defi}
Define two maps $\rsh,\lsh\colon\hPar\lra\hPar$ by
\begin{align}
    \rsh((f_1,f_2,\cdots)) &= (0,f_1,f_2,\cdots), \quad
    \lsh((f_1,f_2,\cdots)) = (f_2,f_3,\cdots).
\end{align}
Abusing the notation, we also regard $\rsh,\lsh$ as
maps from $\Par$ to $\Par$:
\begin{align}
    \rsh((\la_1,\cdots,\la_\ell)) &= (\la_1+1,\cdots,\la_\ell+1), \\
    \lsh((\la_1,\cdots,\la_\ell)) &= (\la_1-1,\cdots,\la_{\ell'}-1),
    \ \text{where $\ell':=\#\{i\ge1\mid \la_i>1\}$.}    
\end{align}
\end{defi}

The following lemma is essentially \cite{MR0557013}*{Lemma 8.9},
but we review the proof
since we assume weaker conditions (on $\cC$).
See Appendix \ref{sect:appe:PI} for
comparison to original arguments in \cite{MR387178},\cite{MR0557013}*{\S8}.
For $\la,\mu\in\Par$,
let $\la\oplus\mu$ be the partition obtained by reordering
$(\la_1,\cdots,\la_{\ell(\la)},\mu_1,\cdots,\mu_{\ell(\mu)})$ in the non-increasing order.
In terms of $\hPar$, it means $(f_i)_{i\ge1} \oplus (g_i)_{i\ge1} = (f_i+g_i)_{i\ge1}$.

\begin{lemm}\label{theo:lpi:block}
    If a subset $\cC\subset\Par$ and 
    an integer $m\in\bbZ_{>0}$
    satisfy
    \begin{equation}\label{eq:mWPImodulus}
        \la\in\cC \implies \la_{\le m}\in\cC
        \qquad\text{and}\qquad
        \lsh^m(\cC) \subset \cC,        
    \end{equation}
    then for each $\la\in\cC$ there uniquely exists a sequence
    $\la^{(1)},\la^{(2)},\cdots$ in $\cC_{\le m}$ such that
    \[
        \la = \la^{(1)} \oplus \rsh^m(\la^{(2)}) \oplus \rsh^{2m}(\la^{(3)}) \oplus \cdots.
    \]
\end{lemm}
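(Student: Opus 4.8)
The plan is to decode $\la$ greedily, block by block, peeling off the parts in the lowest window $\{1,\dots,m\}$ first and then shifting down. Concretely, I would define $\la^{(1)} := \la_{\le m}$, the subpartition consisting of the parts of $\la$ that are $\le m$. The first hypothesis in \eqref{eq:mWPImodulus} guarantees $\la^{(1)}\in\cC_{\le m}$. Next I would strip these parts away and shift the remaining (larger) parts down by $m$: set $\mu := \lsh^m(\la)$, which by the second hypothesis $\lsh^m(\cC)\subset\cC$ again lies in $\cC$. Then $\la^{(2)} := \mu_{\le m}$, and one recurses, defining $\la^{(k)} := \bigl(\lsh^{(k-1)m}(\la)\bigr)_{\le m}$ for each $k\ge 1$.

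First I would verify the decomposition identity $\la = \la^{(1)}\oplus\rsh^m(\la^{(2)})\oplus\rsh^{2m}(\la^{(3)})\oplus\cdots$. The cleanest route is to pass to multiplicity vectors via \eqref{eq:ParPar}: writing $\hatt\la=(f_i)_{i\ge1}$, the operator $\lsh^{(k-1)m}$ deletes the first $(k-1)m$ coordinates, and taking $(\,\cdot\,)_{\le m}$ keeps only coordinates in positions $1,\dots,m$ of what remains, i.e.\ the block $(f_{(k-1)m+1},\dots,f_{km})$ of $\hatt\la$. Applying $\rsh^{(k-1)m}$ reinserts that block back at positions $(k-1)m+1,\dots,km$, padded with zeros elsewhere. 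Since $\oplus$ is coordinatewise addition on $\hPar$ and the supported position-blocks for distinct $k$ are disjoint and exhaust all of $\bbZ_{\ge1}$, the infinite $\oplus$-sum recovers $(f_i)_{i\ge1}=\hatt\la$ coordinatewise. Because $\la$ has finitely many parts, all but finitely many $\la^{(k)}$ are empty, so the sum is really finite and well defined.

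For uniqueness, suppose $\la=\la^{(1)}\oplus\rsh^m(\la^{(2)})\oplus\cdots$ is any decomposition with each $\la^{(k)}\in\cC_{\le m}$. Again in multiplicity-vector coordinates, $\rsh^{(k-1)m}(\la^{(k)})$ is supported only in positions $(k-1)m+1,\dots,km$ (since $\la^{(k)}=\la^{(k)}_{\le m}$ forces $\hatt{\la^{(k)}}$ to be supported in positions $1,\dots,m$). Hence the blocks do not overlap, and reading off the coordinates of $\hatt\la$ in positions $(k-1)m+1,\dots,km$ forces $\hatt{\la^{(k)}}$ to equal $(f_{(k-1)m+1},\dots,f_{km})$ for every $k$. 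This pins down each $\la^{(k)}$ uniquely and shows it coincides with the greedily-constructed sequence above.

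The only genuinely delicate point is making sure the two constructions agree and that membership is correctly tracked, i.e.\ that the greedy block $\bigl(\lsh^{(k-1)m}(\la)\bigr)_{\le m}$ really is $\la^{(k)}$ and lies in $\cC_{\le m}$. Membership in $\cC$ comes from iterating the two hypotheses in \eqref{eq:mWPImodulus}: $\lsh^{(k-1)m}(\la)\in\cC$ by repeated application of $\lsh^m(\cC)\subset\cC$, and then $\bigl(\lsh^{(k-1)m}(\la)\bigr)_{\le m}\in\cC$ by $\la\in\cC\implies\la_{\le m}\in\cC$; the resulting partition is fixed by $(\,\cdot\,)_{\le m}$, so it lies in $\cC_{\le m}$. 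The main obstacle is thus not conceptual but bookkeeping: one must check that $\lsh$, $\rsh$, and $(\,\cdot\,)_{\le m}$ interact as expected on the level of multiplicity vectors, in particular that $\widehat{\la_{\le m}}=f_{\le m}$ (already noted in the text) and that $\rsh$ and $\lsh$ are mutually inverse on the relevant shifted coordinates. Once this translation to $\hPar$ is in place, both existence and uniqueness reduce to the transparent statement that a vector with finite support decomposes uniquely into its consecutive length-$m$ coordinate blocks.
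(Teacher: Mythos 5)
Your proposal is correct and follows essentially the same route as the paper: uniqueness is forced because, in multiplicity-vector coordinates, $\hatt{\la^{(k)}}$ must equal the block $(f_{(k-1)m+1},\dots,f_{km},0,0,\dots)=(\lsh^{m(k-1)}(\hatt\la))_{\le m}$, and membership $\la^{(k)}\in\cC_{\le m}$ follows by iterating the two hypotheses in \eqref{eq:mWPImodulus}. The paper's proof is just a terser version of your argument, leaving the coordinatewise block-decomposition bookkeeping implicit.
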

\begin{proof}
    Let $f=(f_i)_{i\ge1}:=\hatt\la$.
    Obviously $\hatt{\la^{(i)}}$ must be
    $(f_{1+m(i-1)},\dots,f_{mi},0,0,\dots)$ ($=(\lsh^{m(i-1)}(f))_{\le m}$)
    and hence is unique.
    On the other hand,
    by the assumption
    we see 
    $\lsh^{m(i-1)}(\la)\in\cC$
    and hence
    $(\lsh^{m(i-1)}(\la))_{\le m}\in\cC_{\le m}$.
\end{proof}

\begin{lemm}\label{theo:N':parideal}
    The set $\cN$ 
    satisfies \eqref{eq:mWPImodulus} with $m=2$.
\end{lemm}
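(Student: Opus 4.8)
The plan is to reduce everything to the multiplicity-vector description of $\hatt\cN$ provided by Lemma \ref{theo:Naa}, and then to verify the two requirements of \eqref{eq:mWPImodulus} separately. Throughout, fix $\la\in\cN$ and write $f=(f_i)_{i\ge1}=\hatt\la$, so that $f$ avoids each of the forbidden patterns \Pia{1}, \Pia{2}, \Pia{3}, \Pia{4}, \Pia{5a}, \Pia{5b} (at the prescribed indices) and the pattern of \Naak{} for every $k\ge0$.

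For the first requirement $\la\in\cN\Rightarrow\la_{\le2}\in\cN$, I would work with $h:=\widehat{\la_{\le2}}=(f_1,f_2,0,0,\dots)$, which agrees with $f$ in positions $1,2$ and vanishes elsewhere, and check that $h$ avoids all the patterns. The key geometric observation is that in each of \Pia{3}, \Pia{4}, \Pia{5a}, \Pia{5b} and the pattern of \Naak{}, the first and the last prescribed \emph{nonzero} entries sit at indices differing by at least $3$; hence an occurrence in $h$ would force a nonzero entry of $h$ at some index $\ge3$, which is impossible. The only patterns that can be realized within indices $\{1,2\}$ are \Pia{1} (necessarily at $j=1$) and the sum condition \Pia{2}; for the former $(h_1,h_2)=(f_1,f_2)$ coincides with $f$, and for the latter the relevant sums for $h$ are dominated by the corresponding sums for $f$ (as $h_i\le f_i$ for all $i$). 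Thus $h$ inherits these conditions from $f$.

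For the second requirement $\lsh^2(\cN)\subset\cN$, I would set $g:=\lsh^2(f)=(f_{i+2})_{i\ge1}$ and show $g$ avoids every pattern. The conditions \Naa{1}, \Naa{2}, \Naa{3} and \Naak{}, being quantified over all $j\ge1$, are invariant under the reindexing $i\mapsto i+2$: an occurrence in $g$ at position $j$ is literally an occurrence in $f$ at position $j+2$, which $f$ forbids. The parity-constrained conditions \Naa{4} (even indices) and \Naa{5a}, \Naa{5b} (odd indices) are handled by the crucial fact that shifting the index by the \emph{even} amount $2$ preserves parity: since $g_i=f_{i+2}$ and $i\equiv i+2\pmod 2$, an occurrence in $g$ at parameter $j$ becomes an occurrence in $f$ at parameter $j+1$ of the \emph{same} type, again forbidden.

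The one delicate point---and the reason the modulus $m=2$ is forced rather than $m=1$---is exactly this interaction with the parity-sensitive conditions \Pia{4}, \Pia{5a}, \Pia{5b}: a shift by an odd amount would interchange the even- and odd-indexed pattern classes and so need not preserve $\cN$, whereas $\lsh^2$ keeps each parity class stable. Granting this, both verifications amount to the routine pattern-matching bookkeeping indicated above.
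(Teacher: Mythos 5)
Your proof is correct and takes essentially the same approach as the paper, whose proof is a terse statement of exactly your observations: all conditions \Naa{1}--\Naak{} are stable under truncation $f\mapsto f_{\le m}$, while \Naa{1}--\Naa{3}, \Naak{} (resp.\ the parity-sensitive \Naa{4}, \Naa{5a}, \Naa{5b}) are stable under $\lsh$ (resp.\ $\lsh^2$). Your write-up merely fills in the pattern-matching details, including the correct point that shifting by the even amount $2$ preserves the parity classes in \Pia{4}, \Pia{5a}, \Pia{5b}.
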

\begin{proof}
The conditions 
\Naa{1}-\Naa{3}, \Naak{}
    (resp.\,\Naa{4}, \Naa{5a}, \Naa{5b})
    are stable under 
    $\lsh$ (resp.\,$\lsh^2$)
and the all the conditions 
\Naa{1}-\Naak{} are stable under
$(\hPar\ni)\,f\mapsto f_{\le m}$ for any $m>0$.
\end{proof}

\subsection{Forbidden patterns and prefixes}
To avoid confusion
We denote the empty partition by
$\emptypar\in\Par$.

\begin{defi}\label{defi:pidata}
    \noindent (1)
    For a nonempty set $I$, we write
    \begin{equation}\label{eq:Seq(I)}
        \Sinf(I) := 
        \{(i_1,i_2,\dots)\mid i_j\in I\}
        \ (=I^{\bbZ_{\ge 1}}).
    \end{equation}

    \noindent (2)
    For a triple $(I,m,\pi)$ where
    $I$ is a nonempty set, $m\in\bbZ_{>0}$ and 
    $\pi\colon I\lra\Parr{\le m}$ is a map,
    we define 
    \begin{align}
        \Sinf(I,\pi) &:= 
            \{(i_1,i_2,\dots)\in\Sinf(I) 
            \mid \#\{j\ge1\mid\pi(i_j)\neq\emptypar\}<\infty\} \label{eq:S(I)}
    \end{align}
    and
    $\pi^\pst\colon \Sinf(I,\pi)\lra\Par$ by
    \begin{equation}\label{eq:pi}
    \pi^\pst(i_1,i_2,i_3,\cdots) 
    := \pi(i_1) \oplus \rsh^m(\pi(i_2)) \oplus \rsh^{2m}(\pi(i_3)) \oplus \cdots.
\end{equation}
\end{defi}

Now
$
\cN_{\le2} =\{\la\in\cN\mid \la_1\le 2\} = \{\pi_i \mid i\in I \}
$
where $I=\{0,1,2,3,4\}$ and 
\begin{equation}\label{eq:nandi:pi}
    \pi_0 = \emptypar,  \quad
    \pi_1 = (2),  \quad
    \pi_2 = (2,2),  \quad
    \pi_3 = (1),  \quad
    \pi_4 = (1,1), 
\end{equation}
and $\hatt{\cN}_{\le2}=\{\hatt\pi_i\mid i\in I\}$ is given by
\[
    \hatt\pi_0 = (0,0), \quad
    \hatt\pi_1 = (0,1), \quad
    \hatt\pi_2 = (0,2), \quad
    \hatt\pi_3 = (1,0), \quad
    \hatt\pi_4 = (2,0).    
\]
Here we simply wrote $\hatt\pi_i=(f_1,f_2)$ instead of
$\hatt\pi_i=(f_1,f_2,0,0,\dots)$.
Moreover let us write 
$\pi\colon I\lra\Par_{\le2}\,;\,i\mapsto\pi_i$
(using the same symbol).

By Lemma \ref{theo:lpi:block} and Lemma \ref{theo:N':parideal},
we see $\cN$ (and hence $\cN_a$, $a=1,2,3$) is in bijection with a subset of 
$\Seq(I,\pi)$,
and the condition that
$(i_1,i_2,\dots)\in \Seq(I,\pi)$ is in the image of $\cN$ (resp.\ $\cN_a$) is 
as follows:

\begin{prop}\label{theo:nandi:lpi:pre}
    For $\vi=(i_1,i_2,\cdots)\in \Seq(I,\pi)$,
    it holds
    $\pi^\pst(\vi)\in\cN$ 
    if and only if
    $\vi$ does not match
    any of 
    \begin{equation}\label{eq:nandi:M}
        \begin{gathered}
            (1, \{2,3,4\}),\quad
            (2, \{1,2,3,4\}),\quad
            (3, \{2,4\}),\quad
            (4, \{2,3,4\}),\\
            (1,0,4),\quad
            (2,0,\{3,4\}),\quad
            (3,0,4),\quad
            (4,0,4),\quad
            (4,1^*,0,3).
            \end{gathered}
    \end{equation}
    Here, 
    for $x,y,\dots\in I$,
    $\{x,y,\cdots\}$ means exactly one occurence of one of $x,y,\cdots$,
    and $x^*$ means zero or more repetitions of $x$
    $($see also \eqref{eq:reg_op}$)$.
\end{prop}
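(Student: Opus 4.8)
The plan is to translate each of the six defining conditions for $\cN$---as reformulated in Lemma~\ref{theo:Naa} via the conditions \Naa{1}--\Naak{} on the multiplicity vector---into a statement about the sequence $\vi=(i_1,i_2,\dots)$ under the encoding $\pi^\pst$, and to check that each such statement is precisely the assertion that $\vi$ avoids one of the listed patterns in \eqref{eq:nandi:M}. The key observation driving the translation is that the block decomposition of Lemma~\ref{theo:lpi:block} (with $m=2$) means that the symbol $i_j$ records exactly the pair $(f_{2j-1},f_{2j})$ of multiplicities, via the table $\hatt\pi_0=(0,0)$, $\hatt\pi_1=(0,1)$, $\hatt\pi_2=(0,2)$, $\hatt\pi_3=(1,0)$, $\hatt\pi_4=(2,0)$. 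Thus a forbidden configuration of multiplicities $(f_{2j-1},f_{2j},\dots)$ becomes a forbidden window of consecutive symbols $(i_j,i_{j+1},\dots)$, and ``there are no $j\ge1$ such that the pattern occurs'' becomes ``$\vi$ does not match the corresponding symbol pattern.''

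First I would fix the dictionary between multiplicity windows and symbol windows. Reading off the table, a single symbol $i_j\ge 1$ encodes which of the four nonempty pairs occurs at positions $(2j-1,2j)$; in particular $i_j\in\{1,2\}$ forces $f_{2j-1}=0$ (an even part) while $i_j\in\{3,4\}$ forces $f_{2j}=0$ (an odd part), and the magnitude ($1$ vs $2$) records whether the relevant multiplicity is $\ge 1$ or $\ge 2$. With this in hand each condition is a short case check. For instance, \Naa{1} forbids $(f_j,f_{j+1})=(\ge1,\ge1)$; when $j$ is odd this is two nonzero multiplicities within the single block $i_{(j+1)/2}$, which is impossible for any $\pi_i$, and when $j$ is even this couples the even-index multiplicity of one block to the odd-index multiplicity of the next, which is exactly forbidding the symbol pairs $(1,\{2,3,4\})$, $(2,\{1,2,3,4\})$, $(3,\{2,4\})$, $(4,\{2,3,4\})$---matching the first line of \eqref{eq:nandi:M}. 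Similarly \Naa{2} (total multiplicity $\le 2$ in any window of three) and \Naa{3} are subsumed once one notes that patterns like $(3,2^*,3,0)$ never reappear here, and the length-four patterns \Pia{3}, \Pia{4}, \Pia{5a}, \Pia{5b} translate into the second-line patterns $(1,0,4)$, $(2,0,\{3,4\})$, $(3,0,4)$, $(4,0,4)$ by the same even/odd bookkeeping.

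The main obstacle, and the place where care is genuinely required, is the condition \Naak{} for all $k\ge0$, which encodes \Na{6} and produces the single infinite family $(4,1^*,0,3)$. Here I would carefully expand the multiplicity pattern $(f_j,\dots,f_{j+2k+6})=(\ge2,0,0,1,\underbrace{0,1,\dots,1,0,1}_{2k},0,0,\ge1)$ and verify that, aligned to the $m=2$ block structure, the leading $\ge 2$ is an odd multiplicity (symbol $4$), the long alternating middle block of isolated $1$'s at even positions reads as a run of symbols $1$ (each encoding $(0,1)$, i.e.\ a single even part), the trailing isolated $1$ before the final $\ge1$ contributes the $0$ then $3$, and that the two parities of the starting index $j$ give consistent alignments so that exactly the symbol pattern $(4,1^*,0,3)$ results, with the number of repetitions of $1$ tracking $k$. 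I would also check the boundary overlap between patterns---that no forbidden multiplicity configuration straddles a block boundary in a way not captured by a symbol window, and that the finiteness condition in \eqref{eq:S(I)} (only finitely many $\pi(i_j)\neq\emptypar$) matches $\la\in\Par$---so that the equivalence is genuinely two-sided. Once each \Naa{$i$} and \Naak{} is shown equivalent to avoidance of its listed pattern, the proposition follows by conjoining the equivalences, since $\pi^\pst(\vi)\in\cN$ iff all the conditions of Lemma~\ref{theo:Naa} hold.
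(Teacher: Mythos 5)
Your overall strategy --- translate each condition of Lemma \ref{theo:Naa} into forbidden windows of symbols via the block dictionary $(f_{2j-1},f_{2j})=\hatt\pi_{i_j}$, treating the two parities of the starting index separately, and then conjoin the resulting avoidance statements --- is exactly the paper's proof, which tabulates the parity-split patterns condition by condition and then reduces redundancies. But two of your concrete claims are wrong, and they hide the one step that genuinely needs doing. First, \Naa{1} with $j$ even forbids only the pairs $(1,3),(1,4),(2,3),(2,4)$: for instance $(3,2)$ encodes $(f_{2j-1},f_{2j},f_{2j+1},f_{2j+2})=(1,0,0,2)$, which has no two adjacent nonzero multiplicities and so satisfies \Naa{1}; it is forbidden by \Naa{3}, not \Naa{1}. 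The first line of \eqref{eq:nandi:M} is in fact the union of the two-symbol patterns produced by \Naa{1}, \Naa{2} and the odd-aligned case of \Naa{3}. Consequently the per-condition equivalences you propose to conjoin are false as stated; what is true (and what the paper verifies) is that the conjunction of \emph{all} conditions is equivalent to avoiding the \emph{union} of all translated patterns.

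Second, and more seriously, your analysis of \Naak{} is incorrect: the two parities do \emph{not} both align to $(4,1^*,0,3)$. The odd-aligned case yields $(4,1^{k+1},0,3)$ and $(4,1^{k+1},0,4)$, while the even-aligned case yields the genuinely different family $(2,0,3,3^k,1)$ and $(2,0,3,3^k,2)$. These extra families disappear only by a redundancy reduction that your sketch omits: $(2,0,3,3^k,\cdot)$ contains the factor $(2,0,3)$, already forbidden via \Naa{4}, and $(4,1^{k+1},0,4)$ contains $(1,0,4)$, forbidden via the even-aligned \Naa{3}. There is also an off-by-one: \Naak{} always produces at least one repetition of $1$, so the zero-repetition instance $(4,0,3)$ of $(4,1^*,0,3)$ must come from \Naa{5a} --- which your second-line accounting (attributing only $(3,0,4)$ and $(4,0,4)$ to \Pia{5a}, \Pia{5b}) drops. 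None of this is fatal, since the needed redundancy arguments are one-liners, but as written your claimed alignments are false and the conjoining step does not go through.
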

\begin{proof}
    It is straightforward to check
    the conditions 
    \Naa{1}-\Naa{5} and \Naak{} ($k\ge0$)
    corresponds to forbidding
    the patterns in Table \ref{table:forbidden} and \ref{table:forbidden2}.
    \begin{table}[h]
        \centering

    \begin{tabular}{ccc}
        & ($j$: odd) & ($j$: even) \\ \hline
        \Naa{1} & & $(1,3), (1,4), (2,3), (2,4)$ \\[1mm]
        \Naa{2} & 
        $\begin{gathered}
            (1,4), (2,3), (2,4) \\[-1mm] (3,4), (4,3), (4,4)
        \end{gathered}$ &
        $\begin{gathered}
            (1,2), (1,4), (2,1), \\[-1mm] (2,2), (2,3), (2,4)
        \end{gathered}$
        \\[3mm]
        \Naa{3} & $(3,2), (4,2)$ & $(1,0,4), (2,0,4)$ \\[.5mm]
        \Naak{} $(k\ge0)$ & $(4,1,1^k,0,3), (4,1,1^k,0,4)$ & $(2,0,3,3^k,1), (2,0,3,3^k,2)$ \\ \hline
    \end{tabular}
    \caption{Forbidden patterns corresponding to 
    \Naa{1}-\Naa{3}, \Naa{6}
    }
    \label{table:forbidden}

    \begin{tabular}{cc}
        \hline
        \Naa{4} & $(2,0,3)$, $(2,0,4)$ \\[.2mm]
        \Naa{5a} & $(4,0,3)$, $(4,0,4)$ \\[.2mm]
        \Naa{5b} & $(3,0,4)$, $(4,0,4)$ \\ \hline
    \end{tabular}
    \captionsetup{width=\linewidth}
    \caption{Forbidden patterns corresponding to 
    \Naa{4}, \Naa{5a}, \Naa{5b}
    }
    \label{table:forbidden2}
    \end{table}

    It is also easy to see that forbidding all the patterns in 
    Table \ref{table:forbidden} and \ref{table:forbidden2} is
    equivalent to forbidding the patterns in \eqref{eq:nandi:M}.
    (For example, $(2,0,3,3^k,1)$ in \Naak{} is redundant since $(2,0,3)$ is in \eqref{eq:nandi:M}.)
\end{proof}

\begin{prop}\label{theo:nandi:M'}
    For $\vi=(i_1,i_2,\cdots)\in \Seq(I,\pi)$ satisfying
    $\pi^\pst(\vi)\in\cN$,
    it holds
    $\pi^\pst(\vi)\in\cN_a$ $(a=1,2,3)$
    if and only if
    $\vi$ does not start with
    any of 
    \begin{equation}\label{eq:nandi:M'}
        \begin{tabular}{cc}
            $(3)$, $(4)$, & $(a=1)$, \\[1mm]
            $(2)$, $(4)$, $(0,4)$, & $(a=2)$, \\[1mm]
            $(2)$, $(3)$, $(4)$, $(0,4)$, $(1^*,0,3)$, & $(a=3)$.
        \end{tabular}
    \end{equation}
\end{prop}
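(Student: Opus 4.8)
The plan is to translate each defining condition of $\cN_a$ into a condition on a \emph{prefix} of the encoding $\vi=(i_1,i_2,\dots)$ of $\la=\pi^\pst(\vi)$, using the block decomposition of Lemma~\ref{theo:lpi:block}. In that decomposition the $j$-th letter $i_j$ records the parts of $\la$ in $\{2j-1,2j\}$; reading off \eqref{eq:nandi:pi} and the list of $\hatt\pi_i$, the pair $(m_{2j-1}(\la),m_{2j}(\la))$ equals $(0,0),(0,1),(0,2),(1,0),(2,0)$ according as $i_j=0,1,2,3,4$. Thus $m_1,m_2$ depend only on $i_1$ and $m_3$ only on $i_2$, and I would first record the elementary equivalences $m_1(\la)=0\iff i_1\in\{0,1,2\}$, $m_1(\la)\le1\iff i_1\ne4$, $m_2(\la)\le1\iff i_1\ne2$, and $m_3(\la)=0\iff i_2\in\{0,1,2\}$, $m_3(\la)\le1\iff i_2\ne4$.

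The conditions on $i_1$ alone give prefixes directly: $m_1(\la)=0$ forbids $(3),(4)$ (this already settles $\cN_1$), while $m_2(\la)\le1$ forbids $(2)$ and $m_1(\la)\le1$ forbids $(4)$. For the conditions on $i_2$ I would invoke Proposition~\ref{theo:nandi:lpi:pre}: since $\la\in\cN$, the pairs $(1,4),(2,4),(3,4),(4,4)$ and $(1,3),(2,3),(4,3)$ of \eqref{eq:nandi:M} are forbidden, so the only letter that can precede $i_2=4$ is $0$, and the only letters that can precede $i_2=3$ are $0$ and $3$. Hence within $\cN$ the event $i_2=4$ coincides with ``$\vi$ begins with $(0,4)$'' and $i_2=3$ with ``$\vi$ begins with $(0,3)$ or with $(3)$''. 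This yields $\cN_2$ at once (forbidding $(2),(4),(0,4)$), and shows that the three multiplicity conditions of $\cN_3$ amount to forbidding $(2),(3),(4),(0,4)$ and $(0,3)$.

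It remains to account for the last condition of $\cN_3$ by the prefixes $(1^k,0,3)$ with $k\ge1$; since $(0,3)$ above is the case $k=0$, the two families merge into the single $(1^*,0,3)$. The key point is the equivalence, for $\la\in\cN$, that $\la$ matches $(2k+3,2k,\dots,4,2)$ for some $k\ge1$ if and only if $\vi$ begins with $(1^k,0,3)$ for some $k\ge1$. For ``$\Leftarrow$'' I would simply decode $(1^k,0,3)$: the $k$ letters $1$ give the single even parts $2,4,\dots,2k$ in blocks $1,\dots,k$, the letter $0$ empties block $k+1$ (so $2k+1,2k+2$ are absent), and the closing $3$ gives one part $2k+3$; as all later blocks produce only parts $\ge2k+5$, the tail of $\la$ is exactly $2k+3,2k,\dots,4,2$, which is the asserted match.

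For ``$\Rightarrow$'' I would argue that the match is anchored at the first block. The part $2$ occupies the even slot of block $1$ alone, so a match forces $m_2(\la)\ge1$, i.e.\ $i_1\in\{1,2\}$; the case $i_1=2$ is already covered by the prefix $(2)$, so I may assume $i_1=1$, whence $m_1(\la)=0$, the unique part $2$ is the last part of $\la$, and the match sits at the tail. Decoding the tail then forces $i_1=\dots=i_k=1$, $i_{k+1}=0$, and $i_{k+2}\in\{3,4\}$; but $i_{k+2}=4$ would make $(i_k,i_{k+1},i_{k+2})=(1,0,4)$ a forbidden pattern of \eqref{eq:nandi:M} (equivalently it violates \Na{3}), so $i_{k+2}=3$ and $\vi$ begins with $(1^k,0,3)$. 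Collecting the five families $(2),(3),(4),(0,4),(1^*,0,3)$ then proves the statement for $\cN_3$, and the shorter lists for $\cN_1,\cN_2$ follow from the first two paragraphs. I expect this anchoring step to be the main obstacle: a match of $(2k+3,\dots,2)$ could a priori occur anywhere inside $\la$, and one must use both that the part $2$ pins it to block $1$ and the forbidden pattern $(1,0,4)$ to exclude the spurious ending $i_{k+2}=4$; keeping track of the interior even parts and of the overlap of $(0,3)$ between the $m_3=0$ condition and the $k=0$ case is where the bookkeeping needs care.
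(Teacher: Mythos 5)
Your proposal is correct and takes essentially the same route as the paper's proof: you translate the multiplicity conditions into constraints on $i_1,i_2$, translate a match of $(2k+3,2k,\dots,4,2)$ into the prefixes $(1^k,0,\{3,4\})$, and then remove redundancy using the forbidden patterns of \eqref{eq:nandi:M} (in particular $(1,0,4)$ to exclude the ending $4$, and the pairs $(x,3)$, $(x,4)$ to reduce $(\ast,3)$, $(\ast,4)$ to $(0,3)$, $(0,4)$), exactly as the paper does. The only difference is one of detail: the paper asserts these equivalences outright, whereas your tail-anchoring argument for the match condition spells out the step the paper leaves implicit.
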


\begin{proof}
For $\cN_1$, 
the additional condition 
$m_1(\la)=0$ 
is equivalent to 
that 
$\vec{i}$
cannot start with
any of 
$(3),(4)$.

For $\cN_2$, 
the additional condition 
$m_i(\la)\le 1$  for $i=1,2,3$ 
is equivalent to 
that 
$\vec{i}$
cannot start with
any of 
$(2),
(4),
(\{0,1,2,3,4\},4)$,
which is equivalent to, after reducing redundancy,
that $\vec{i}$ cannot start with any of 
$(2),
(4),
(0,4)$.    

For $\cN_3$, 
the additional condition that
$m_1(\la)=m_3(\la)=0$, $m_2(\la)\le 1$ and 
$\la$ does not match 
$(2k+3,2k,2k-2,\cdots,4,2)$ (for $k\ge 1$)
is equivalent to 
that 
$\vec{i}$
cannot start with
any of 
$
(2),
(3),
(4),
(\{0,1,2,3,4\},\{3,4\}),
(1^k,0,\{3,4\})
$ (for $k\ge1$).
That is equivalent to, after reducing redundancy,
that $\vec{i}$ cannot start with any of 
$(2),
(3),
(4),
(0,4),
(1^*,0,3)$.
\end{proof}

\section{A formal language theoretic approach}\label{sect:automaton}

In \S\ref{sect:automaton}
we assume $\AL$ is a nonempty finite set.
Let $\AL^*=\bigsqcup_{n\ge 0}\AL^n$ 
be the monoid of words on $\AL$;
elements of $\AL^*$ are written like $i_1\cdots i_n$ (where $i_j\in\AL$),
and the monoid multiplication is concatenation.
Let $\varepsilon$ be the empty word
(i.e.,\,$\AL^0=\{\varepsilon\}$)
and put
$\AL^+:=\AL^*\sm\{\varep\}$.
A \emph{language} (over $\AL$) is a subset of $\AL^*$.
We write
the \emph{product} of 
$X,Y\subset \AL^*$
and the \emph{Kleene star} of $X\subset\AL^*$
as
\begin{equation}\label{eq:reg_op}
XY:=\{ab\mid a\in X,b\in Y\},
\qquad
X^*:=\bigcup_{n\ge 0}X^n.
\end{equation}

\subsection{Regular languages and finite automata}
\label{sect:relang}

\begin{defi}[\cite{sipser13}*{Definition 1.5}]\label{defi:DFA}
A \emph{deterministic finite automaton} (or \emph{DFA} for short)
over $\AL$ is a 5-tuple
$\AUT=(Q,\AL,\delta,s,F)$ where
$Q$ is a finite set (the set of \emph{states}),
$\delta\colon Q\times \AL\lra Q$ (the \emph{transition function}),
$s\in Q$ (the \emph{start state}) and
$F\subset Q$ (the set of \emph{accept states}).
\end{defi} 

\begin{defi}
For a DFA $\AUT=(Q,\AL,\delta,s,F)$,
we define $\hatt\delta\colon Q\times \AL^*\lra Q$
inductively by $\hatt\delta(q, \vep):=q$ and
$\hatt\delta(q, wa):=\delta(\hatt\delta(q,w), a)$ ($q\in Q$, $a\in \AL$, $w\in \AL^*$).
Let $L(\AUT)$ denote the language that $\AUT$ \emph{recognizes} (or \emph{accepts}),
i.e.,\,$L(\AUT)=\{w\in \AL^*\mid\hatt\delta(s,w)\in F\}$.
\end{defi} 

\begin{defi}[\cite{sipser13}*{Definition 1.16}]
A language $X\subset\AL^*$ 
is called \emph{regular}
(or \emph{rational})
if there exists a DFA recognizing $X$.
\end{defi}

\begin{exam}
    The empty set $\emptyset$, singletons $\{i\}$ ($i\in\AL$) 
    and $\AL$ are regular.
\end{exam}

\begin{prop}[See e.g.,\,\cite{sipser13}*{Theorem 1.25, 1.47, 1.49}]
    \label{prop:DFA:1}
    If $Y,Z\subset \AL^*$ are regular,
    so are
    $Y\cap Z$,
    $YZ$,
    $Y^c$
    and
    $Y^*$
    (and thus
    $Y\cup Z$ and
    $Y\sm Z$ as well).
\end{prop}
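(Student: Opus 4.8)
The plan is to fix DFAs $M_1=(Q_1,\AL,\delta_1,s_1,F_1)$ and $M_2=(Q_2,\AL,\delta_2,s_2,F_2)$ recognizing $Y$ and $Z$, to establish closure under \emph{complement} and \emph{intersection} by explicit DFA constructions, to deduce \emph{union} and \emph{difference} by Boolean identities, and to treat \emph{product} and \emph{Kleene star} last as the substantive cases. Complement is immediate from the totality of $\hatt\delta$: the DFA $(Q_1,\AL,\delta_1,s_1,Q_1\sm F_1)$ accepts $w$ precisely when $\hatt\delta_1(s_1,w)\notin F_1$, i.e. when $w\notin Y$, so it recognizes $Y^c$.

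For intersection I would form the product automaton $(Q_1\times Q_2,\AL,\delta,(s_1,s_2),F_1\times F_2)$ with $\delta((q_1,q_2),a)=(\delta_1(q_1,a),\delta_2(q_2,a))$. A short induction on the length of $w$ gives $\hatt\delta((s_1,s_2),w)=(\hatt\delta_1(s_1,w),\hatt\delta_2(s_2,w))$, so $w$ is accepted iff both coordinates are accepting, i.e. $w\in Y\cap Z$. Union and difference then follow formally, via $Y\cup Z=(Y^c\cap Z^c)^c$ and $Y\sm Z=Y\cap Z^c$ (alternatively, union can be read off the same product automaton by choosing the accept set $(F_1\times Q_2)\cup(Q_1\times F_2)$).

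The genuinely harder cases are product $YZ$ and Kleene star $Y^*$, since no single deterministic run can commit in advance to where $w$ should be cut into a factor in $Y$ followed by a factor in $Z$. The natural approach introduces nondeterminism: for $YZ$, glue $M_1$ to $M_2$ by adding $\vep$-transitions from each state of $F_1$ to $s_2$, keeping start state $s_1$ and accept set $F_2$; for $Y^*$, adjoin a fresh accepting start state with an $\vep$-transition to $s_1$ and loop every state of $F_1$ back to $s_1$. Each such automaton transparently recognizes the intended language, and to return to a bona fide DFA (as Definition \ref{defi:DFA} requires) I would invoke the subset construction converting any $\vep$-NFA to an equivalent DFA.

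I expect the main obstacle to be exactly this last point: the constructions above presuppose the notion of an $\vep$-moving NFA together with the theorem that NFAs and DFAs recognize the same languages. If one prefers to stay strictly within Definition \ref{defi:DFA}, the subset construction can be performed by hand. For $YZ$ one takes state space $Q_1\times 2^{Q_2}$, tracking the unique active $M_1$-state alongside the set of $M_2$-states reached through some already-completed $Y$-factor, maintaining the invariant that $s_2$ belongs to the second component whenever the first lies in $F_1$; correctness follows from an induction showing that after reading $w$ the second component equals $\{\hatt\delta_2(s_2,v)\mid w=uv,\ u\in Y\}$, with acceptance meaning this set meets $F_2$. An analogous construction on $2^{Q_1}$, with a fresh accepting start to account for $\vep\in Y^*$, handles the star. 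In every case the real content lies in the product and star constructions, with the remaining closure properties reducing to them through the Boolean identities above.
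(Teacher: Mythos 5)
Your proposal is correct and follows essentially the same route as the paper: complement by swapping accept states, intersection via the product DFA, union and difference by Boolean identities, and product and Kleene star via $\varepsilon$-NFA constructions (gluing $F_1$ to $s_2$, resp.\ a fresh accepting start state) followed by the subset construction of Proposition \ref{prop:DFA=eNFA}. Your additional hand-rolled determinizations on $Q_1\times 2^{Q_2}$ and $2^{Q_1}$ are a correct but unnecessary supplement, since the paper likewise passes through $\varepsilon$-NFAs and cites the subset construction once and for all.
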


We review
an algorithmic proof of Proposition \ref{prop:DFA:1}
in Appendix \ref{sect:appe:FLT}.

\vspace{1mm}

For any regular language $X\subset\AL^*$,
there uniquely exists a DFA 
recognizing $X$ 
with the fewest states
(up to isomorphism of DFAs, i.e.,\,renaming of states),
called the \emph{minimal DFA} recognizing $X$.
For a DFA $M$, we denote by $M_{\min}$ 
the minimal DFA such that $L(M_{\min})=L(M)$.
There is an algorithm to compute $M_{\min}$ from a given DFA $M$
(see Remark \ref{prop:DFA:2}).

\begin{coro}\label{theo:DFA_equiv}
    $L(M)=L(N)\Llra M_{\min}\simeq N_{\min}$
    for DFAs $M$ and $N$.
\end{coro}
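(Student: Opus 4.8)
The plan is to deduce both implications directly from the uniqueness of the minimal DFA asserted in the paragraph preceding the corollary, together with the elementary fact that isomorphic DFAs recognize the same language. Both directions are short; essentially all the content is already packaged in the uniqueness statement.

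For the forward implication, I would assume $L(M)=L(N)$. By the defining property of the minimal DFA we have $L(M_{\min})=L(M)$ and $L(N_{\min})=L(N)$, so $L(M_{\min})=L(N_{\min})$. Thus $M_{\min}$ and $N_{\min}$ are both DFAs with the fewest states recognizing the common regular language $L(M)$. The uniqueness of the minimal DFA (up to isomorphism) then forces $M_{\min}\simeq N_{\min}$.

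For the backward implication, the one step that genuinely needs checking is that an isomorphism of DFAs preserves the recognized language. Writing $M=(Q,\AL,\delta,s,F)$ and $N=(Q',\AL,\delta',s',F')$, an isomorphism is a bijection $\varphi\colon Q\to Q'$ with $\varphi(s)=s'$, $\varphi(F)=F'$, and $\varphi(\delta(q,a))=\delta'(\varphi(q),a)$ for all $q\in Q$ and $a\in\AL$. A straightforward induction on the length of $w\in\AL^*$ shows $\varphi(\hatt\delta(q,w))=\hatt{\delta'}(\varphi(q),w)$; taking $q=s$ and using $\varphi(s)=s'$ and $\varphi(F)=F'$ yields $L(M)=L(N)$ whenever $M\simeq N$. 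Applying this to the assumed isomorphism $M_{\min}\simeq N_{\min}$ gives $L(M_{\min})=L(N_{\min})$, and since $L(M)=L(M_{\min})$ and $L(N)=L(N_{\min})$ we conclude $L(M)=L(N)$.

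I do not expect any real obstacle: both directions follow immediately once the uniqueness of the minimal DFA is granted. The only mildly technical point is the inductive verification that an isomorphism respects the extended transition function $\hatt\delta$ and hence the accepted language, but this is an entirely routine induction on word length and could even be stated without proof as it underlies the meaning of ``isomorphism of DFAs'' already used informally in the excerpt.
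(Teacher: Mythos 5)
Your proof is correct and follows exactly the route the paper intends: the corollary is stated there without proof, as an immediate consequence of the uniqueness (up to isomorphism) of the minimal DFA asserted in the preceding paragraph, plus the routine fact that isomorphic DFAs recognize the same language. Your explicit induction verifying that an isomorphism respects $\hatt\delta$ just spells out what the paper leaves implicit.
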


\subsection{Regularly linked sets}
\label{sect:regularWPI}

Recall Definition \ref{defi:pidata}.

\begin{defi} 
    For
    $S\subset \Seq(\AL)$ 
    and $X,X'\subset \AL^+$
    (recall we assumed $n>0$ in Definition \ref{defi:match} for simplicity),
    we write 
    \[
        \Avoid{S}{X}{X'} := 
        \left\{\vec{i}\in S\relmiddle|
            \begin{aligned}
                &\text{$\forall\vec{j}\in X$, $\vi$ does not match $\vec{j}$, and} \\
                &\text{$\forall\vec{j}\in X'$, $\vi$ does not begin with $\vec{j}$}
            \end{aligned}
        \right\}.
    \]
    In other words,
    \begin{equation}\label{eq:avoid}
        \Avoid{S}{X}{X'} 
        = S\cap\comp{\big(\AL^*\sm (\AL^*X\AL^*\cup X'\AL^*)\big)}
    \end{equation}
    where 
    we write
    for $A\subset\AL^*$
    \[
        \comp{A} := 
        \{(i_j)_{j\ge1}\in\Seq(\AL)\mid
        \forall n\ge1,\,i_1\cdots i_n\in A\}.
    \]
\end{defi}

\begin{defi}\label{defi:regularWPI}
    We say that a subset 
    $\cC\subset\Par$ is \emph{regularly linked}
    if 
    there exists a 5-tuple $(m,I,\pi,X,X')$
    consisting of an integer 
    $m>0$,
    a nonempty finite set $I$,
    an injective map $\pi\colon I\lra \Par_{\le m}$ 
    (hence $\pi^\pst\colon\Seq(I,\pi)\lra\Par$
    is injective)
    and
    regular languages
    $X,X'\subset I^+$
    such that
    \begin{equation}\label{eq:regularWPI}
        \pi^\pst(\Avoid{\Seq(I,\pi)}{X}{X'}) = \cC.
    \end{equation}
\end{defi}

\begin{exam}\label{exam:nandi:reg}
The sets $\cN$ and $\cN_a$ ($a=1,2,3$)
(see Conjecture \ref{theo:Nandi_conj}) 
are regularly linked.
Indeed,
Proposition \ref{theo:nandi:lpi:pre} and \ref{theo:nandi:M'}
translate as
\begin{gather}
    \pi^\pst(\Avoid{\Sinf(I,\pi)}{\FB_\cN}{\emptyset}) = \cN,
    \\
    \pi^\pst(\Avoid{\Sinf(I,\pi)}{\FB_\cN}{\FB_{\cN_a}}) = \cN_a
    \label{eq:nandi:pi:Na}
\end{gather}
where $I=\{0,1,2,3,4\}$,
$\pi\colon I\lra\Par_{\le 2}\,;\,i\mapsto\pi_i$
is given by \eqref{eq:nandi:pi},
and 
\begin{gather}
    X_\cN=
    \left\{\begin{gathered}
        12,13,14,
        21,22,23,24,
        32,34,
        42,43,44,\\
        104,203,204,304,404
    \end{gathered}\right\}\cup\{4\}\{1\}^*\{03\},
    \quad
    \label{eq:nandi:M:lang}
    \\
    \FB_{\cN_1} = \{3,4\}, \,
    \FB_{\cN_2} = \{2,4,04\}, \,
    \FB_{\cN_3} = \{2,3,4,04\}\cup\{1\}^*\{03\} 
    \quad
    \label{eq:nandi:X':lang}
\end{gather}
are the regular languages over $I$
consisting of the patterns in \eqref{eq:nandi:M}
and \eqref{eq:nandi:M'}.
\end{exam}

\begin{rema}
In Definition \ref{defi:regularWPI}, $X$ is superfluous
since we can write
\[
\Avoid{\Seq(I,\pi)}{X}{X'}=\Avoid{\Seq(I,\pi)}{\emptyset}{X'\cup \AL^*X}.
\]
Note that
if $X(\subseteq \AL^+)$ is regular then so is $\AL^*X(\subseteq \AL^+)$
(see Proposition \ref{prop:DFA:1}). 
Nevertheless, it seems more consistent with human intuition
to separate some forbidden patterns from forbidden prefixes
as seen in Example \ref{exam:nandi:reg} (see also Proposition \ref{exam:LPI}).
\end{rema}

\subsection{The main construction}
\label{sect:auto:theo}

\begin{defi}\label{eq:Mv}
For a DFA $M=(Q,\AL,\delta,s,F)$ and $v\in Q$
we write $M_v:=(Q,\AL,\delta,v,F)$, that is, the same DFA as $M$ except that its start state is $v$.
\end{defi}

\begin{defi}\label{defi:j_vi}
    For a nonempty set $I$, 
    we write
    \begin{alignat}{2}
        j\cdot\vec{i} &:= (j,i_1,i_2,\dots) \in\Seq(I)
        &\quad&\text{for $j\in I$, $\vec{i}=(i_1,i_2,\dots)\in\Seq(I)$},
        \\
        j\cdot S &:= \{j\cdot\vec{i}\mid\vec{i}\in S\} \subset\Seq(I)
        &&\text{for $j\in I$, $S\subset \Seq(I)$}.
    \end{alignat}
\end{defi}

\begin{lemm}\label{theo:L(Mv):rec}
    Let $M=(Q,I,\de,s,F)$ be a DFA.
    For $v\in Q$ we have
    \[
        \comp{(L(M_v)^c)} = \hspace{-2mm}\bigsqcup_{\substack{a\in I \\ \de(v,a)\notin F}} \hspace{-1mm} a\cdot \comp{(L(M_{\de(v,a)})^c)}.
    \]
\end{lemm}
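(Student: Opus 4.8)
The plan is to unwind the two definitions in play---that of $\comp{(\,\cdot\,)}$ and that of the language $L(M_v)$---and reduce the claimed identity to the elementary concatenation property of the extended transition function $\hatt\delta$. Writing out $L(M_v)^c=\{w\in I^*\mid\hatt\delta(v,w)\notin F\}$, the left-hand side becomes
\[
\comp{(L(M_v)^c)}=\{(i_1,i_2,\dots)\in\Seq(I)\mid\hatt\delta(v,i_1\cdots i_n)\notin F \text{ for all } n\ge1\},
\]
that is, the set of infinite sequences no nonempty prefix of which drives $M_v$ into an accept state. The right-hand side, by the definition of $a\cdot(\,\cdot\,)$, consists of those sequences whose first letter $a$ satisfies $\de(v,a)\notin F$ and whose tail lies in $\comp{(L(M_{\de(v,a)})^c)}$.

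First I would record the standard fact that $\hatt\delta(v,uw)=\hatt\delta(\hatt\delta(v,u),w)$ for all $u,w\in I^*$; since $\hatt\delta$ is defined by appending one letter at a time, this follows by a one-line induction on the length of $w$. Specializing to $u=i_1$ gives the single identity I actually need,
\[
\hatt\delta(v,i_1 i_2\cdots i_n)=\hatt\delta(\de(v,i_1),\,i_2\cdots i_n)\qquad(n\ge1),
\]
which lets me peel off the first letter of a prefix.

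With this in hand both inclusions are immediate. For $\subseteq$, given $\vec{i}\in\comp{(L(M_v)^c)}$, the case $n=1$ shows that $a:=i_1$ satisfies $\de(v,a)\notin F$, while the peeling identity converts the conditions $\hatt\delta(v,i_1\cdots i_n)\notin F$ (for $n\ge2$) into $\hatt\delta(\de(v,a),i_2\cdots i_n)\notin F$, i.e.\ the tail $(i_2,i_3,\dots)$ lies in $\comp{(L(M_{\de(v,a)})^c)}$; hence $\vec{i}\in a\cdot\comp{(L(M_{\de(v,a)})^c)}$. For $\supseteq$, the same identity read in the other direction shows that prepending such an $a$ to any tail in $\comp{(L(M_{\de(v,a)})^c)}$ yields a sequence all of whose prefixes avoid $F$. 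Finally, the union is disjoint because the summands $a\cdot(\,\cdot\,)$ for distinct $a$ consist of sequences with distinct first letters.

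I expect no genuine obstacle here---this is the almost trivial lemma advertised in the introduction. The only point requiring the slightest care is the bookkeeping in the peeling identity (matching a prefix of length $n$ on the left with one of length $n-1$ on the right), together with remembering to handle the $n=1$ prefix separately, since it carries the constraint $\de(v,a)\notin F$ that singles out the admissible first letters indexing the disjoint union.
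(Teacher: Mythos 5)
Your proof is correct and is essentially the paper's own argument: the paper compresses it into the single observation that $\comp{(L(M_v)^c)}=\{(a_i)_{i\ge1}\in\Seq(I)\mid\forall n\ge1,\,\hatt\de(v,a_1\cdots a_n)\notin F\}$, and your unwinding via the peeling identity $\hatt\de(v,i_1\cdots i_n)=\hatt\de(\de(v,i_1),i_2\cdots i_n)$, the separate treatment of $n=1$, and the first-letter disjointness is exactly the reasoning left implicit there.
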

\begin{proof}
    By
    $\comp{(L(M_v)^c)}=\{(a_i)_{i\ge1}\in\Seq(I)\mid 
        \forall n\ge1,\,\hatt\de(v,a_1\cdots a_n)\notin F
    \}$.
\end{proof}

For $\la\in\Par$ we write
$\text{wt}(\la) := x^{\len(\la)}q^{|\la|}$.
Assume a map $\pi\colon I\lra\Par_{\le m}$ is given.
For $\vec{i}\in\Sinf(I,\pi)$ and $j\in I$, we have
\begin{equation}\label{eq:wt_ji}
    \twt(\pi^\pst(j\cdot\vec{i})) 
    = \twt(\pi(j))\cdot \big(\twt(\pi^\pst(\vec{i}))|_{x\mapsto xq^m}\big)
\end{equation}
by $\pi^\pst(j\cdot\vec{i}) = \pi(j)\oplus\rsh^m(\pi^\pst(\vec{i}))$ (and
$\twt(\rsh(\la))=\twt(\la)|_{x\mapsto xq}$).

\begin{theo}\label{theo:reso:cor}
    Assume $\cC\subset\Par$ is regularly linked
    and let $m,I,\pi,X,X'$ be as in Definition \ref{defi:regularWPI}.
    Let $M=(Q,I,\de,s,F)$ be a DFA recognizing $I^*XI^*\cup X'I^*$.
    Note that $s\in Q\sm F$ since $\varep\notin I^*XI^*\cup X'I^*=L(M)$.
    Define
    \begin{equation}\label{eq:reso:cor:cC}
        \cC^{(v)} :=
        \pi^\pst\big(\Seq(I,\pi)\cap\comp{(L(M_v)^c)}\big)
    \end{equation}
    for
    $v\in Q\sm F$.
    Then $\cC^{(s)}=\cC$ and we have a system of $q$-difference equations
    \begin{equation}\label{eq:reso:sysqdiff}
        f_{\cC^{(v)}}(x,q)
        = \sum_{u\in Q\sm F} 
        \bigg(\hspace{-1mm}\sums{a\in I \\ u=\de(v,a)} \hspace{-2mm} x^{\l(\pi_{a})} q^{|\pi_{a}|}\bigg)
        f_{\cC^{(u)}}(xq^m,q)
        \quad (v\in Q\sm F).
    \end{equation}
\end{theo}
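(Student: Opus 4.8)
The plan is to prove the two assertions of the theorem in turn: first the base identification $\cC^{(s)}=\cC$, and then the recursion \eqref{eq:reso:sysqdiff}. The main engine is the elementary recursion on languages already isolated in Lemma \ref{theo:L(Mv):rec}, together with the weight-splitting formula \eqref{eq:wt_ji}. Everything should reduce to translating the set-theoretic decomposition of $\comp{(L(M_v)^c)}$ into generating functions via $\pi^\pst$, so I do not expect any genuinely new ideas to be required — the content is bookkeeping that matches the two lemmas precisely.

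**First I would** verify $\cC^{(s)}=\cC$. By definition $\cC^{(s)}=\pi^\pst\big(\Seq(I,\pi)\cap\comp{(L(M_s)^c)}\big)$, and since $s$ is the start state of $M$ we have $M_s=M$, so $L(M_s)=L(M)=I^*XI^*\cup X'I^*$. Unwinding the definition of $\comp{(\cdot)}$, a sequence $\vi=(i_j)_{j\ge1}$ lies in $\comp{(L(M)^c)}$ iff every prefix $i_1\cdots i_n$ lies in $L(M)^c=I^*\sm(I^*XI^*\cup X'I^*)$, i.e.\ iff no prefix matches a pattern in $X$ and no prefix begins with a word in $X'$; but a sequence matches some $\vec j\in X$ (resp.\ begins with some $\vec j\in X'$) iff some finite prefix does. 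Hence $\comp{(L(M)^c)}$ is exactly the set of $\vi$ avoiding $X$ as patterns and $X'$ as prefixes, so $\Seq(I,\pi)\cap\comp{(L(M)^c)}=\Avoid{\Seq(I,\pi)}{X}{X'}$, and applying $\pi^\pst$ gives $\cC^{(s)}=\cC$ by \eqref{eq:regularWPI}. This step is routine but I must be careful that the two notions ``avoiding the regular language $I^*XI^*\cup X'I^*$'' and ``$\Avoid{\cdot}{X}{X'}$'' literally coincide on infinite sequences, using \eqref{eq:avoid}.

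**Next I would** derive the recursion. Fix $v\in Q\sm F$. Lemma \ref{theo:L(Mv):rec} gives the disjoint decomposition $\comp{(L(M_v)^c)}=\bigsqcup_{a\in I,\ \de(v,a)\notin F} a\cdot\comp{(L(M_{\de(v,a)})^c)}$. Intersecting with $\Seq(I,\pi)$ (which is stable under the operations $a\cdot(-)$ since prepending one letter changes only finitely many coordinates) and applying the injective map $\pi^\pst$, I get a disjoint union expressing $\cC^{(v)}$ as the image under $\pi^\pst$ of the sets $a\cdot\big(\Seq(I,\pi)\cap\comp{(L(M_{\de(v,a)})^c)}\big)$. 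Summing $\twt$ over $\cC^{(v)}$, the disjointness lets me sum the contributions separately, and the generating-function definition $f_{\cC^{(v)}}(x,q)=\sum_{\la\in\cC^{(v)}}\twt(\la)$ together with the prepend-weight formula \eqref{eq:wt_ji}, namely $\twt(\pi^\pst(a\cdot\vi))=x^{\l(\pi_a)}q^{|\pi_a|}\cdot\big(\twt(\pi^\pst(\vi))|_{x\mapsto xq^m}\big)$, converts each block into $x^{\l(\pi_a)}q^{|\pi_a|}f_{\cC^{(\de(v,a))}}(xq^m,q)$. Regrouping the sum over $a\in I$ with $\de(v,a)\notin F$ by the resulting target state $u=\de(v,a)\in Q\sm F$ yields exactly \eqref{eq:reso:sysqdiff}.

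**The main obstacle** I anticipate is purely a convergence/well-definedness point rather than a conceptual one: the manipulation of $f_{\cC^{(v)}}(x,q)$ as a formal sum requires that each weight-graded piece be finite, i.e.\ that for fixed $n$ only finitely many $\la\in\cC^{(v)}$ have $|\la|=n$, which holds since $\pi^\pst$ is injective and the constraint $\#\{j\mid\pi(i_j)\neq\emptypar\}<\infty$ in the definition \eqref{eq:S(I)} of $\Seq(I,\pi)$ forces the encoded partitions to be genuine partitions; I would note this so that the interchange of summation after the disjoint decomposition is legitimate term by term in each $q$-degree. Beyond that caveat, the proof is a direct transcription of Lemma \ref{theo:L(Mv):rec} through $\pi^\pst$ using \eqref{eq:wt_ji}, exactly as the surrounding text advertises (``its key part is proved immediately from an almost trivial lemma'').
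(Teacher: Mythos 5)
Your proposal is correct and takes essentially the same route as the paper's proof: the identification $\cC^{(s)}=\cC$ via $M_s=M$ and \eqref{eq:avoid}, the disjoint decomposition of $S_v:=\Seq(I,\pi)\cap\comp{(L(M_v)^c)}$ obtained from Lemma \ref{theo:L(Mv):rec}, and the weight transfer \eqref{eq:wt_ji} applied through the injective map $\pi^\pst$ with the terms regrouped by $u=\de(v,a)$. Your closing well-definedness caveat is harmless but automatic, since $\cC^{(v)}\subset\Par$ already guarantees that each coefficient of $f_{\cC^{(v)}}(x,q)$ counts finitely many partitions.
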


\begin{proof}
The fact $\cC^{(s)}=\cC$ is obvious by $M_s=M$ and \eqref{eq:avoid}.
    Put
    $S_v:=\Seq(I,\pi)\cap\comp{(L(M_v)^c)}$.
    Then by Lemma \ref{theo:L(Mv):rec} we have 
\[
S_v = \hspace{-2mm}\bigsqcup_{\substack{a\in I \\ \de(v,a)\notin F}} \hspace{-1mm} a\cdot S_{\de(v,a)}.
\]
    Apply the map
    $\Seq(I,\pi)\supset S\mapsto \sum_{\vec{i}\in S} \twt(\pi^\pst(\vec{i}))$.
    Since
    $\pi^\pst$ is injective and
    $f_\cC(x,q) = \sum_{\la\in\cC}\twt(\la)$ for $\cC\subset\Par$ (see \eqref{eq:defi:f_C}),
    $S_v$ is then mapped to $f_{\cC^{(v)}}(x,q)$.
    Hence by 
    \eqref{eq:wt_ji}
    we get \eqref{eq:reso:sysqdiff}.
\end{proof}

\begin{rema}\label{rema:reso:Xv}
    In Theorem \ref{theo:reso:cor},
    we can explicitly determine $\cC^{(v)}$
    if $v\in Q\sm F$ is reachable, i.e.,\,$v=\hatt\delta(s,w)$ for some 
$w\in\AL^*$ (for example, every state in a minimal DFA is reachable).
In Appendix \ref{sect:appe:minFP} we show
    \[
        L(M_v) = I^*XI^*\cup X''I^*
        \quad\text{for some $X''\subset I^+$},
    \]
    and 
    explicitly find
    the minimum such $X''$,
    namely,
    the regular language 
    $X_v$ given in \eqref{eq:Xv} (with $\AL:=I$).
    Hence
    for $v\in Q\sm F$ we have by \eqref{eq:avoid}
    \begin{equation}\label{eq:L(Mv)c:comp}
        \comp{(L(M_v)^c)} = \Avoid{\Seq(I)}{\FB}{\FB_v}.
    \end{equation}
Thus, $\cC^{(v)}$ is regularly linked with forbidden patterns $X$ and prefixes $X_v$:
    \[
        \cC^{(v)}
        =
        \pi^\pst(\Avoid{\Sinf(I,\pi)}{\FB}{\FB_v})
        \quad(\subset \cC).
    \]
\end{rema}

\begin{proof}[Proof of Theorem \ref{theo:main}]
Since $|Q\sm F|$ is finite in Theorem \ref{theo:reso:cor},
from the system \eqref{eq:reso:sysqdiff}
we can deduce for any $v\in Q\sm F$ 
a single $q$-difference equation for 
$f_{\cC^{(v)}}(x,q)$
by the algorithm given in 
\cite{MR387178}*{p.1040}
(called \emph{Modified Murray--Miller Theorem} therein),
which we review in Appendix \ref{sect:MurrayMiller}.
\end{proof}

\section{A proof of Nandi's conjectures}\label{sect:nandi:proof}

\subsection{Algorithmic derivation of $q$-difference equations}
\label{sect:nandi:DFA}

We apply Theorem \ref{theo:reso:cor} for $\cN$
(recall Example \ref{exam:nandi:reg}).
The resulting system \eqref{eq:reso:sysqdiff}
depends on the choice of a DFA $M$ in Theorem \ref{theo:reso:cor},
and
in this case we can achieve the proof
by taking $M$ minimal.

Let $I=\{0,1,2,3,4\}$ and
let $X=X_{\cN}\subset I^+$ be the regular language given in \eqref{eq:nandi:M:lang}.
Since the proof of Proposition 
\ref{prop:DFA:1}
and Remark \ref{prop:DFA:2}
are constructive,
we can algorithmically find 
(see Remark \ref{rema:GAP})
the minimal DFA that recognizes $I^*XI^*$ is $M=(Q,I,\delta,s,F)$ where
$Q=\{q_0,\dots,q_7\}$, $s=q_0$, $F=\{q_6\}$
and $\delta\colon Q\times I\lra Q$ is given by Table \ref{table:DFA:A},
in which we display
$\delta'(v,j)$ 
such that $\delta(q_v,j)=q_{\delta'(v,j)}$
($v\in \{0,\dots,7\}$, $j\in I$).
See also Figure \ref{fig:DFA:A} (but we will not refer it).

{
\newcommand{\tabhaba}{.25\textwidth}
\newcommand{\semaisp}{\hspace{2mm}}
\newcommand{\semaitab}{@{}c@{\hspace{1mm}}|@{\hspace{1mm}}c@{\semaisp}c@{\semaisp}c@{\semaisp}c@{\semaisp}c}
\begin{figure}[h]
    \begin{minipage}{\tabhaba}
        \centering
        {\small
        \begin{tabular}{\semaitab}
            $v\backslash j$ & 0 & 1 & 2 & 3 & 4 \\ \hline
            0 & 0 & 1 & 2 & 3 & 4 \\
            1 & 5 & 1 & 6 & 6 & 6 \\
            2 & 7 & 6 & 6 & 6 & 6 \\
            3 & 5 & 1 & 6 & 3 & 6 \\
            4 & 7 & 4 & 6 & 6 & 6 \\
            5 & 0 & 1 & 2 & 3 & 6 \\
            6 & 6 & 6 & 6 & 6 & 6 \\
            7 & 0 & 1 & 2 & 6 & 6 
        \end{tabular}
        \captionsetup{type=table} 
        \captionsetup{width=\linewidth}
        \caption{$\delta'(v,j)$}
        \label{table:DFA:A}
        }
    \end{minipage}
    \begin{minipage}{.5\textwidth}
        \centering
\begin{tikzpicture}[scale=.70,->,>=latex',shorten >=1pt,auto,state/.style={circle,draw,scale=.9,inner sep=0pt,minimum size=1.5em}]
    \node[state,initial] at (0,0) (q0) {$q_0$};
    \node[state] (q1) at (2,-1.3) {$q_1$};
    \node[state] (q2) at (4,0) {$q_2$};
    \node[state] (q3) at (3,-3) {$q_3$};
    \node[state] (q4) at (2,2.5) {$q_4$};
    \node[state] (q5) at (4.5,-1.5) {$q_5$};
    \node[state,accepting] (q6) at (7,0) {$q_6$};
    \node[state] (q7) at (3,1.5) {$q_7$};

    \draw (q0) edge[loop above] node[scale=.70] {0} (q0);
    \draw (q0) edge node[scale=.70,left] {1} (q1);
    \draw (q0) edge node[scale=.70] {2} (q2);
    \draw (q0) edge[bend right] node[scale=.70,below left] {3} (q3);
    \draw (q0) edge node[scale=.70] {4} (q4);

    \draw (q1) edge node[scale=.70,below] {0} (q5);
    \draw (q1) edge[loop left] node[scale=.70] {1} (q1);
    \draw (q1) edge node[scale=.70,near end,below] {2,3,4} (q6);

    \draw (q2) edge node[right,scale=.70] {0} (q7);
    \draw (q2) edge node[scale=.70,near start] {1,2,3,4} (q6);

    \draw (q3) edge[bend right] node[scale=.70,right] {0} (q5);
    \draw (q3) edge node[scale=.70,left] {1} (q1);
    \draw (q3) edge[bend right] node[scale=.70,below] {2,4} (q6);
    \draw (q3) edge[loop below] node[scale=.70] {3} (q3);

    \draw (q4) edge node[scale=.70,above] {0} (q7);
    \draw (q4) edge[loop left] node[scale=.70] {1} (q4);
    \draw (q4) edge[bend left] node[scale=.70,above] {2,3,4} (q6);

    \draw (q5) edge node[scale=.70,right,near end] {0} (q0);
    \draw (q5) edge[bend left] node[scale=.70] {1} (q1);
    \draw (q5) edge node[scale=.70,near start,right] {2} (q2);
    \draw (q5) edge node[scale=.70,below] {3} (q3);
    \draw (q5) edge[bend right] node[scale=.70,near start] {4} (q6);

    \draw (q6) edge[loop above] node[scale=.70] {0,1,2,3,4} (q6);

    \draw (q7) edge node[scale=.70,above] {0} (q0);
    \draw (q7) edge node[scale=.70,near start,left] {1} (q1);
    \draw (q7) edge[bend right] node[scale=.70,left] {2} (q2);
    \draw (q7) edge node[scale=.70] {3,4} (q6);
\end{tikzpicture}
        \caption{}
        \label{fig:DFA:A}
\end{minipage}
\end{figure}
}

Writing $F_i(x):=f_{\cN^{(q_i)}}(x,q)$
for $q_i\in Q\sm F$ (i.e.,\,$i\in\{0,\dots,5,7\}$),
by Theorem \ref{theo:reso:cor}
we obtain a system of $q$-difference equations

{\small
\begin{equation}\label{eq:nandi:sysqdiff}
    \begin{pmatrix}
        F_{0}(x) \\ 
        F_{1}(x) \\ 
        F_{2}(x) \\ 
        F_{3}(x) \\ 
        F_{4}(x) \\ 
        F_{5}(x) \\ 
        F_{7}(x)
    \end{pmatrix}
    = 
    \left(\begin{array}{rrrrrrrr}
        1 & x q^{2} & x^{2} q^{4} & x q & x^{2} q^{2} & 0 & 0 \\
        0 & x q^{2} & 0 & 0 & 0 & 1 & 0 \\
        0 & 0 & 0 & 0 & 0 & 0 & 1 \\
        0 & x q^{2} & 0 & x q & 0 & 1 & 0 \\
        0 & 0 & 0 & 0 & x q^{2} & 0 & 1 \\
        1 & x q^{2} & x^{2} q^{4} & x q & 0 & 0 & 0 \\
        1 & x q^{2} & x^{2} q^{4} & 0 & 0 & 0 & 0
    \end{array}\right)
    \begin{pmatrix}
        F_{0}(xq^2) \\ 
        F_{1}(xq^2) \\ 
        F_{2}(xq^2) \\ 
        F_{3}(xq^2) \\ 
        F_{4}(xq^2) \\ 
        F_{5}(xq^2) \\ 
        F_{7}(xq^2)
    \end{pmatrix}.
\end{equation}
}

Moreover, 
it can be algorithmically proved 
(see Remark \ref{rema:GAP})
that
\begin{equation}\label{eq:L(M_qi)}
\begin{aligned}
    L(M_{q_7}) &= I^*XI^*\cup X_{\cN_1}I^*, \\
    L(M_{q_3}) &= I^*XI^*\cup X_{\cN_2}I^*, \\
    L(M_{q_4}) &= I^*XI^*\cup X_{\cN_3}I^*,
\end{aligned}
\end{equation}
where
$X_{\cN_1},X_{\cN_2},X_{\cN_3}\subset I^+$
are as in \eqref{eq:nandi:X':lang};
one can construct
DFAs recognizing the right-hand sides
via Proposition \ref{prop:DFA:1}
and then use Corollary \ref{theo:DFA_equiv}.
Now
by 
\eqref{eq:avoid},
\eqref{eq:nandi:pi:Na}, 
\eqref{eq:reso:cor:cC} and
\eqref{eq:L(M_qi)}
we have
\begin{equation}\label{eq:cN_qi}
\cN^{(q_7)}=\cN_1,\quad
\cN^{(q_3)}=\cN_2,\quad
\cN^{(q_4)}=\cN_3.
\end{equation}

\begin{rema}\label{rema:X_N}
    Alternatively,
    one can show \eqref{eq:cN_qi}
    by computing DFAs 
    recognizing
    $X_{\cN_a}$ ($a=1,2,3$) and
    $X_{q_i}$ (given by \eqref{eq:Xv}; 
    see also Remark \ref{rema:reso:Xv})
    for $q_i\in Q\sm F$
    and check
    $X_{q_7}=X_{\cN_1}$,
    $X_{q_3}=X_{\cN_2}$,
    $X_{q_4}=X_{\cN_3}$
    by Corollary \ref{theo:DFA_equiv}.
\end{rema}

Hence,
we can apply the algorithm described in Appendix \ref{sect:MurrayMiller}
to obtain $q$-difference equations 
for
$f_{\cN_1}(x,q) = F_{7}(x)$,
$f_{\cN_2}(x,q) = F_{3}(x)$ and
$f_{\cN_3}(x,q) = F_{4}(x)$
(the explicit calculation is given in 
Appendix \ref{sect:qd:proof}):

\begin{prop}\label{theo:qd}
For $a=1,2,3$,
    the series
    $f_{\cN_a}(x,q)$ satisfies the $q$-difference equation
    \begin{equation}\label{eq:qd:F}
        0 = \sum_{i=0}^{5} p^{(a)}_{2i}(x,q) f_{\cN_a}(xq^{2i},q),
    \end{equation}
    where $p^{(a)}_{2i}=p^{(a)}_{2i}(x,q)$ are given in
    the following table.
    {
\footnotesize
\[
\begin{tabular}{c@{\hspace{0.5mm}}|@{\hspace{1mm}}l@{\hspace{1mm}}l@{\hspace{1mm}}l}
    & $a=1$ & $a=2$ & $a=3$ \\ \hline
    $p^{(a)}_{0}$ & $1$ & $1$ & $1$
    \\
    $p^{(a)}_{2}$ & 
    $-1 - x(q^{2} + q^{3} + q^{4})$ & 
    $-1 - x(q + q^2 + q^{4})$ &
    $-1 - x(q^{2} + q^{4} + q^{5})$
    \\
    $p^{(a)}_{4}$ & 
    $xq^{4}(1 - x + xq^3 + xq^4 + xq^5)$ & 
    $x q^4 (1 + xq + xq^3)$ &
    $x q^4  (1 + xq^5 + xq^7)$ 
    \\
    $p^{(a)}_{6}$ & 
    $x^{2}q^{6}(-1 + xq^{4}(1 + q + q^{2} - q^{5}))$ & 
    $x^2 q^{10} (-1 + xq^4 + xq^6)$ &
    $x^2 q^{10} (-1 + xq^4 + xq^6)$ 
    \\
    $p^{(a)}_{8}$ & 
    $x^{3}q^{13} (1 + q + q^{2})(1 - xq^{6})$ & 
    $x^3 q^{15} (1 + q^2 + q^3) (1 - xq^6)$ &
    $x^3 q^{18} (1 + q + q^3) (1 - xq^6)$ 
    \\
    $p^{(a)}_{10}$ & 
    $x^{3}q^{17} (1 - xq^{6}) (1 - xq^{8})$ &
    $x^3 q^{19} (1 - xq^6) (1 - xq^8)$ &
    $x^3 q^{23} (1 - xq^6) (1 - xq^8)$ 
\end{tabular}
\]
}
\end{prop}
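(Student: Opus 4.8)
The plan is to treat Proposition \ref{theo:qd} as the outcome of a purely algorithmic elimination applied to the linear system \eqref{eq:nandi:sysqdiff}. By \eqref{eq:cN_qi} the three target series are specific components of the vector $(F_0,\dots,F_5,F_7)$, namely $f_{\cN_1}(x,q)=F_7(x)$, $f_{\cN_2}(x,q)=F_3(x)$ and $f_{\cN_3}(x,q)=F_4(x)$. Thus for each $a$ it suffices to eliminate the remaining six auxiliary series from \eqref{eq:nandi:sysqdiff} and obtain a single scalar $q$-difference equation for the chosen component. This elimination is exactly what the Modified Murray--Miller Theorem (Appendix \ref{sect:MurrayMiller}) carries out, so the substance of the proof is to run that algorithm on the explicit $7\times 7$ coefficient matrix of \eqref{eq:nandi:sysqdiff} and record the outcome. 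Since all three equations arise from the same matrix, one can reuse the iterated relations and only vary the component that is retained.

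Concretely, I would write \eqref{eq:nandi:sysqdiff} as $\vec{F}(x)=A(x,q)\,\vec{F}(xq^2)$, where $A$ is the displayed matrix over $\bbZ[q][x]$, and iterate the shift $x\mapsto xq^2$ to obtain the relations $\vec{F}(xq^{2j})=A(xq^{2j},q)\,\vec{F}(xq^{2(j+1)})$ for $j=0,1,2,\dots$. Regarding the components of $\vec{F}(x),\vec{F}(xq^2),\dots$ as unknowns over the field $\bbQ(q)(x)$, these relations allow one, by successive substitution, to eliminate all six auxiliary series and retain only the shifts $f_{\cN_a}(x,q),f_{\cN_a}(xq^2,q),\dots$ of the chosen component. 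A linear dependence among sufficiently many such shifts then yields a scalar $q$-difference equation. This is precisely the elimination performed by the Modified Murray--Miller Theorem, which guarantees termination and, after clearing denominators, returns the relation; in each of the three cases it reduces to the order-$5$ equation \eqref{eq:qd:F}.

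The one genuinely delicate point is computational rather than conceptual: the intermediate expressions are rational functions in $x$ with coefficients in $\bbZ[q]$ whose size grows quickly, and one must verify that, after clearing denominators and cancelling the spurious common factors the elimination introduces, the equation collapses \emph{exactly} to order $5$ with the coefficients $p^{(a)}_{2i}$ tabulated in the statement. I expect this bookkeeping—carried out three times in parallel—to be the main obstacle, and I would handle it symbolically, confirming the result independently by expanding both sides as power series in $x$ to high order (which is feasible because the $f_{\cN_a}$ are determined recursively by \eqref{eq:nandi:sysqdiff}). The remaining steps, namely checking $p^{(a)}_0=1$ and matching the factored forms of $p^{(a)}_{4},\dots,p^{(a)}_{10}$, are then a routine comparison of polynomials, and the explicit calculation can be deferred to Appendix \ref{sect:qd:proof}.
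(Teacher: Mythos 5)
Your proposal is correct and matches the paper's own route: the paper likewise uses \eqref{eq:cN_qi} to identify $f_{\cN_1}=F_7$, $f_{\cN_2}=F_3$, $f_{\cN_3}=F_4$, and then (in Appendix~\ref{sect:qd:proof}) runs the Modified Murray--Miller elimination of Appendix~\ref{sect:MurrayMiller} on the system \eqref{eq:nandi:sysqdiff}---after permuting the components so the target series sits first---obtaining in each case an order-$5$ scalar equation that becomes \eqref{eq:qd:F} after the shift $x\mapsto xq^{8}$. The only cosmetic difference is that the paper reruns the algorithm separately for each of the three permuted systems rather than reusing intermediate relations, and it does not need your power-series sanity check, since the symbolic elimination is carried out exactly.
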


\vspace{1mm}
\begin{rema}
\label{rema:GAP}
We can use computer algebras in these constructions.
For example,
using a GAP package Automata \cites{GAP4,GAP_Automata1.14}
we can compute $M$ (up to renaming of states) as follows:
{\footnotesize
\begin{verbatim}
 gap> LoadPackage("automata");
 gap> Xn:=RationalExpression("12U13U14U21U22U23U24U32U34U42U43U44U104U203U204U304U404U41*03","01234");
 gap> Is:=RationalExpression("(0U1U2U3U4)*","01234");
 gap> r:=ProductRatExp(Is,ProductRatExp(Xn,Is));
 gap> M:=RatExpToAut(r);
 gap> Display(M);
\end{verbatim}
}

\noindent
We can also check \eqref{eq:L(M_qi)}:
for $\cN_1$, ($\cN_2$ and $\cN_3$ are similar)
{\footnotesize
\begin{verbatim}
 gap> Xn1:=RationalExpression("3U4","01234");
 gap> r1:=UnionRatExp(r,ProductRatExp(Xn1,Is));
 gap> SetInitialStatesOfAutomaton(M,5);
 gap> AreEquivAut(M,RatExpToAut(r1));
\end{verbatim}
}
\noindent
Here, the state 5 (in the third line) corresponds to $q_7$
in our notation.
\end{rema}

\subsection{Solving the equation (\ref{eq:qd:F})}\label{sect:psum}

Recall the \emph{Euler's identities} \cite{MR2128719}*{(II.1),(II.2)}
\begin{equation}
    \sum_{n\ge 0} \frac{x^n}{(q;q)_n} \overset{\refEuler}{=} \frac{1}{(x;q)_\infty}, \qquad
    \sum_{n\ge 0} \frac{q^{\binom{n}{2}} x^n}{(q;q)_n} \overset{\refEulerr}{=} (-x;q)_\infty.
\end{equation}

The following lemma is a formal series version of
Appell's comparison theorem \cite{MR0089895}*{page 101}.

\begin{lemm}\label{theo:Appell}
    For formal series
    $A(x)=\sum_{m\ge0} a_m x^m, B(x)=A(x)/(1-x)=\sum_{n\ge0}b_n x^n$,
    if 
    $\lim_{n\rightarrow\infty} b_n$ exists
    then 
    $(A(1)=)\,\sum_{m\ge 0} a_m = \lim_{n\rightarrow\infty} b_n$.
\end{lemm}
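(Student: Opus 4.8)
The plan is to exploit the fact that the coefficients $b_n$ of $B(x)$ are precisely the partial sums of the $a_m$. Writing $1/(1-x)=\sum_{k\ge0}x^k$ as a formal power series in $x$ and multiplying by $A(x)$, I would read off the coefficient of $x^n$ in $B(x)=A(x)/(1-x)$ to obtain
\[
    b_n = \sum_{m=0}^{n} a_m \qquad (n\ge 0).
\]
Equivalently, clearing the denominator gives $A(x)=(1-x)B(x)$, and comparing coefficients of $x^m$ yields $a_0=b_0$ together with $a_m=b_m-b_{m-1}$ for $m\ge 1$; summing this telescoping relation recovers the same identity. Either route establishes the single fact on which everything rests.

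With the identity $b_n=\sum_{m=0}^{n}a_m$ in hand, the $N$-th partial sum of $\sum_m a_m$ is just $b_N$. Hence, provided the limit $\lim_{n\to\infty}b_n$ exists, the infinite sum $\sum_{m\ge 0}a_m$ is by definition the limit of its partial sums, which is exactly $\lim_n b_n$. Moreover, the hypothesis that $\lim_n b_n$ exists forces $a_m=b_m-b_{m-1}\to 0$, so that $A(1)=\sum_m a_m$ is itself convergent and well defined; thus the conclusion $A(1)=\lim_n b_n$ follows immediately.

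I expect no genuine obstacle here: once the partial-sum identity is recorded, the statement is just the definition of an infinite sum as the limit of partial sums. The only point requiring care is to make precise the topology in which the limit is taken. In the present setting the coefficients $a_m,b_n$ lie in a ring of formal series in $q$, and ``$\lim_n b_n$ exists'' should be read as coefficientwise convergence in $q$ (equivalently, $q$-adic convergence), in which all the manipulations above are legitimate term by term. Fixing this interpretation is the entirety of the work; the algebraic content is the one-line telescoping observation.
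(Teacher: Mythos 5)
Your proposal is correct and is essentially the paper's own proof: the authors likewise observe that $B(x)=A(x)/(1-x)$ gives $\sum_{n=0}^{M}a_n=b_M$ and then let $M\to\infty$. Your extra remarks on telescoping and on the coefficientwise interpretation of the limit are sound elaborations of that same one-line argument.
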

\begin{proof}
    By $B(x)=A(x)/(1-x)$
    we have
    $\sum_{n=0}^{M}a_n = b_M$,
    and let $M\rightarrow\infty$.
\end{proof}

Within the proof below
we freely use the fact 
the $q$-difference equation
$\sum_{i,j,k} a_{ijk} x^iq^jF(xq^k)=0$ 
for a formal series $F(x)=\sum_{M\in\bbZ}f_M x^M$ 
is equivalent to
the recurrence
$\sum_{i,j,k} a_{ijk} q^{k(M-i)+j} f_{M-i}=0$ for all $M\in\bbZ$.

\begin{proof}[Proof of Theorem \ref{theo:asum=prod}]
    We simply write $F_a(x)=f_{\cN_a}(x,q)$.
    First we consider the case $a=1$.
    Define $G_1(x)$ and $\{g^{(1)}_M\}_{M\in\bbZ}$ by
    \begin{equation}\label{eq:qd:F-G}
        G_1(x) 
        = \sum_{M\in\bbZ} g^{(1)}_M x^M
        := \frac{F_1(x)}{(x;q^2)_\infty}.
    \end{equation}
    Note that $g^{(1)}_M=0$ if $M<0$.
    Dividing \eqref{eq:qd:F} by $(xq^6;q^2)_\infty$ yields
    \begin{align*} 
        0=&\ (1-x) (1-x q^{2}) (1-x q^{4}) G_1(x) \\
        &- (1- x q^{2}) (1-x q^{4}) (1 + x q^{2} + x q^{3} + x q^{4}) G_1(xq^{2})\\ 
        &+ x q^{4} (1-x q^{4}) ( 1 - x + x q^{3}  + x q^{4} + x q^{5}) G_1(xq^{4})\\
        &- x^{2} q^{6} (1 - x q^{4} -  x q^{5} -  x q^{6} + x q^{9}) G_1(xq^{6})\\ 
        &+ x^{3} q^{13} (1 + q + q^{2}) G_1(xq^{8})
         + x^{3} q^{17} G_1(xq^{10}),
    \end{align*}
    which is equivalent to
    \begin{equation}\label{eq:qd:g}
    \begin{aligned}
        0=&\ ( 1 - q^{2M} )g^{(1)}_{M} 
        +(-1-q^{2}-q^{4}-q^{2M+1}+q^{4M})^{} g^{(1)}_{M-1} \\
        &+ q^{2} (1+q^{2}+q^{4}-q^{2M-3}) (1+q^{2M-3}) (1+q^{2M-2}) g^{(1)}_{M-2} \\
        &- q^{6}(1-q^{2M-5})^{}(1+q^{2M-5})^{}(1+q^{2M-4})^{}(1+q^{2M-3})^{}(1+q^{2M-2}) g^{(1)}_{M-3}
    \end{aligned}
    \end{equation}
    for all $M\in\bbZ$.
    Letting
    \begin{equation}\label{eq:qd:G-H}
        h^{(1)}_M := \frac{g^{(1)}_M}{(-q;q)_{2M}}
        \quad\text{and}\quad 
        H_1(x) 
        := \sum_{M\in\bbZ}h^{(1)}_M x^M
    \end{equation}
    (note that $h^{(1)}_M=0$ if $M<0$)
    and dividing \eqref{eq:qd:g} by $q^{-1}(-q;q)_{2M-2}$, 
    we have
    \begin{align*}
        0 &= q (1-q^{2M}) (1+q^{2M-1}) (1+q^{2M}) h^{(1)}_{M}
        +q (-1-q^{2}-q^{4}-q^{2M+1}+q^{4M}) h^{(1)}_{M-1}\\
        &\phantom{=}+q^{3} (1+q^{2}+q^{4}-q^{2M-3}) h^{(1)}_{M-2}
        -q^{7} (1-q^{2M-5}) h^{(1)}_{M-3}
    \end{align*}
    for all $M\in\bbZ$,
    which is equivalent to
    \begin{equation}\label{eq:qd:H}
        \begin{aligned} 
            0 &= q (1-x) (1-x q^{2}) (1-x q^{4}) H_1(x) \\
            &\phantom{=}+ (1-x q^{2}) (1 - x q^{4}) (1 + x q^{2}) H_1(xq^{2})
            - q (1-x q^{4}) H_1(xq^{4})
            - H_1(xq^{6}).
        \end{aligned}
    \end{equation}
    Finally we define $I_1(x)$ and $i^{(1)}_M$ ($M\in\bbZ$) by
    \begin{equation}\label{eq:qd:H-I}
        I_1(x) = \sum_{M\in\bbZ}i^{(1)}_M x^M := H_1(x) (x;q^2)_\infty
    \end{equation}
    (note that $i^{(1)}_M=0$ if $M<0$)
    and multiply \eqref{eq:qd:H} by $(xq^6;q^2)_\infty$ to obtain
    \begin{align*} 
        0 &= q I_1(x)
        +(1+xq^{2})I_1(xq^{2})
        -qI_1(xq^{4})
        -I_1(xq^{6})
    \end{align*}
    which is equivalent to
    \[
        0 = q (1-q^{2M}) (1+q^{2M}) (1+q^{2M-1}) i^{(1)}_{M}
        + q^{2M} i^{(1)}_{M-1}
    \]
    for all $M\in\bbZ$.
    Since
    $i^{(1)}_0 = h^{(1)}_0 = g^{(1)}_0 = F_1(0) = f_{\cN_1}(0,q)= 1$,
    we have
    \begin{equation}
        i^{(1)}_M = \frac{(-1)^M q^{M^2}}{(-q;q)_{2M} (q^2;q^2)_M},
        \ \text{i.e.,}\
        I_1(x) = \sum_{M\ge 0} \frac{(-1)^M q^{M^2}}{(-q;q)_{2M} (q^2;q^2)_M} x^M.
    \end{equation}

    The cases $a=2,3$ can be treated parallelly:
    defining
    $G_a(x) = \sum_M g^{(a)}_M x^M$,
    $H_a(x) = \sum_M h^{(a)}_M x^M$,
    $I_a(x) = \sum_M i^{(a)}_M x^M$
    by
    transformations shown below,
{
\small
\[
\begin{tabular}{c|cc}
    $a=1$ & $a=2$ & $a=3$  \\ \hline 
     & & \\[-2mm]
    \eqref{eq:qd:F-G} & 
    $\DS G_2(x) = F_2(x)/(x;q^2)_\infty$ & 
    $\DS G_3(x) = F_3(x)/(x;q^2)_\infty$ 
    \\[3mm]
    \eqref{eq:qd:G-H} & 
    $\DS h^{(2)}_M = g^{(2)}_M / (-q;q)_{2M}$ & 
    $\DS h^{(3)}_M = g^{(3)}_M / (-q^2;q)_{2M}$ 
    \\[3mm]
    \eqref{eq:qd:H-I} & 
    $I_2(x) = H_2(x) (x;q^2)_\infty$ &
    $I_3(x) = H_3(x) (x;q^2)_\infty$
\end{tabular}
\]
}

\noindent
we can get
\begin{equation}\label{eq:i_M}
    I_a(x) = \sum_{M\ge 0} \frac{(-1)^M q^{M(M+2t)}}{(-q^{1+s};q)_{2M} (q^2;q^2)_M} x^M,
\end{equation}
where $(s,t):=(0,0), (0,1), (1,1)$ for $a=1,2,3$ respectively.

For each $a=1,2,3$,
    by \refEuler{} we have
    \[
        H_a(x) 
        = \frac{I_a(x)}{(x;q^2)_\infty} 
        = \sum_{N\ge 0} \frac{x^N}{(q^2;q^2)_N} 
        \sum_{M\ge 0} \frac{(-1)^M q^{M(M+2t)}}{(-q^{1+s};q)_{2M} (q^2;q^2)_M} x^M.
    \]
    Hence by \eqref{eq:qd:G-H}
    \begin{equation}\label{eq:g_M}
        g^{(a)}_L
        = \sum_{0\le M \le L} 
            \frac{(-1)^M q^{M(M+2t)} (-q^{1+s};q)_{2L}}{(q^2;q^2)_{L-M} (-q^{1+s};q)_{2M} (q^2;q^2)_M}
    \end{equation}
    for $L\ge 0$,
    which implies
    \[
        \lim_{L\rightarrow\infty} g^{(a)}_L
        =
        \frac{ (-q;q)_{\infty}}{(q^2;q^2)_{\infty}}
        \sum_{M \ge 0} 
        \frac{(-1)^M q^{M(M+2t)}}{ (-q;q)_{2M+s} (q^2;q^2)_M}. 
    \]
    Since $F_a(x) = (x;q^2)_\infty G_a(x) = (1-x) (xq^2;q^2)_\infty G_a(x)$,
    by Lemma \ref{theo:Appell}
    \begin{align}
        F_a(1) = (q^2;q^2)_\infty \lim_{L\rightarrow\infty}g^{(a)}_L
        &= 
            (-q;q)_{\infty}
            \sum_{M \ge 0} 
            \frac{(-1)^M q^{M(M+2t)}}{(-q;q)_{2M+s} (q^2;q^2)_M}.
            \label{eq:F(1):1sum}
    \end{align}

    Now the left equalities in Theorem \ref{theo:asum=prod}
    follows from 
    three identities 
    due to Slater (\cite{MR0049225}*{(117),(118),(119)}=\cite{MR3752624}*{(A.187),(A.186),(A.188)}
    with $q\mapsto -q$):
    \begin{equation}\label{eq:Slater:mod28}
        \sum_{n\ge 0} \frac{(-1)^n q^{n(n+2t)}}{(-q;q)_{2n+s} (q^2;q^2)_{n}}
        =
        \frac{(q;q^2)_\infty}{(q^2;q^2)_\infty}
        \frac{(q^{2b},q^{14-2b},q^{14};q^{14})_\infty}{(q^{b},q^{14-b};q^{14})_\infty},
    \end{equation}
    where $(b,s,t)=(3,0,0), (1,0,1), (5,1,1)$.

    Also,
    using \refEulerr{} for
    $(-q;q)_{\infty}/(-q;q)_{2M+s}
    = (-q^{2M+1+s};q)_{\infty}$,
    we have
    \[
        \eqref{eq:F(1):1sum}
        = 
        \sum_{M\ge 0} 
        \frac{(-1)^M q^{M(M+2t)}}{(q^2;q^2)_M}
        \sum_{K\ge 0}
        \frac{q^{\binom{K}{2} + (2M+1+s)K}}{(q;q)_K} 
        =
        N_a,
    \]
    proving the right equalities in Theorem \ref{theo:asum=prod}.
\end{proof}

\begin{rema}
    By eliminating the summation on $j$ in \eqref{eq:asum} using \refEulerr,
    we see
\begin{alignat*}{2}
    N_a
    &=
    \sum_{i\ge 0} (q^{1+2i+2t};q^2)_\infty
    \frac{q^{\binom{i}{2}+(1+s)i}}{(q;q)_i}
    & &=
    (q;q^2)_\infty
    \sum_{i\ge 0} \frac{q^{\binom{i}{2}+(1+s)i}}{(q;q)_i (q;q^2)_{i+t}}
\end{alignat*}
for $(a,s,t)=(1,0,0), (2,0,1), (3,1,1)$.
Hence, 
as Step 3 (in \S\ref{sect:intro:auto_qdiff}) for Nandi's conjectures,
we can employ instead of \eqref{eq:Slater:mod28}
another three identities
\cite{MR0049225}*{(81),(80),(82)}=\cite{MR3752624}*{(A.124),(A.125),(A.126)} (for the same $(a,s,t)$):
\[
    \sum_{i\ge 0} \frac{q^{\binom{i}{2}+(1+s)i}}{(q;q)_i (q;q^2)_{i+t}}
    =
    \frac{(q^{a},q^{7-a},q^7;q^7)_\infty (q^{7-2a},q^{7+2a};q^{14})_\infty}{(q;q)_\infty (q;q^2)_\infty}.
\]
\end{rema}

\appendix

\section{Textbook constructions for finite automata}
\label{sect:appe:FLT}

To recall the proof of Proposition \ref{prop:DFA:1}
we need \emph{$\vep$-NFAs}:

\begin{defi}[\cite{sipser13}*{Definition 1.37}]
    A \emph{nondeterministic finite automaton with $\varep$-transitions} 
    (or \emph{$\varep$-NFA} for short)
    over $\AL$ is a 5-tuple
    $\AUT=(Q,\AL,\De,s,F)$ where
    $Q$ is a finite set, 
    $\De\colon Q\times (\AL\sqcup\{\varep\})\lra 2^Q$, 
    $s\in Q$ and 
    $F\subset Q$. 
\end{defi}
\begin{defi}
    Let $\AUT=(Q,\AL,\De,s,F)$ be an $\varep$-NFA.

    \noindent (1)
    For $A\subset Q$,
    its \emph{$\vep$-closure} $\epcl(A)$ is
    the set of states that are reachable from a state in $A$
    via successive $\varep$-transitions,
    i.e.,\,$\cl(A):=\bigcup_{n\ge0} \De_\varep^n(A)$
    where $\De_\varep(B):=\bigcup_{q\in B}\De(q,\varep)$ for $B\subset Q$.
    
    \noindent (2)
    We
    define 
    $\hatt\De\colon Q\times \AL^*\lra 2^Q$
    inductively
    by $\hatt\De(q, \varepsilon)=\cl(\{q\})$ and
    $\hatt\De(q, wa)=\cl\big(\bigcup_{q'\in\hatt\De(q,w)} \De(q',a)\big)$ 
    ($q\in Q$, $a\in \AL$, $w\in \AL^*$).
    We write
    $L(\AUT)=\{w\in \AL^*\mid\hatt\De(s,w)\cap F\neq\emptyset\}$,
    the language that $\AUT$ \emph{recognizes}.
\end{defi}

\begin{prop}[See e.g.,\,\cite{sipser13}*{Corollary 1.40} for the details]
    \label{prop:DFA=eNFA}
    A language $X\subset\AL^*$ is regular if and only if
    there exists an $\varep$-NFA recognizing $X$.
\end{prop}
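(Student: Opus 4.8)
The plan is to prove the two implications separately, with essentially all the content residing in the "if" direction. For the "only if" direction, suppose $X$ is regular, so by definition there is a DFA $\AUT=(Q,\AL,\de,s,F)$ with $L(\AUT)=X$. I would reinterpret $\AUT$ as an $\varep$-NFA $\AUT'=(Q,\AL,\De,s,F)$ by setting $\De(q,a):=\{\de(q,a)\}$ for $a\in\AL$ and $\De(q,\varep):=\emptyset$. Since no $\varep$-transitions are present we have $\cl(A)=A$ for every $A\subset Q$, and a trivial induction on word length gives $\hatt\De(q,w)=\{\hatt\de(q,w)\}$; hence $L(\AUT')=L(\AUT)=X$, so $X$ is recognized by an $\varep$-NFA.

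For the converse — the substantive direction — I would use the subset (powerset) construction, adapted to absorb $\varep$-transitions through the $\varep$-closure operator $\cl$. Given an $\varep$-NFA $\AUT=(Q,\AL,\De,s,F)$ with $L(\AUT)=X$, I would build the DFA $\AUT^{\det}=(2^Q,\AL,\de',s',F')$ whose states are the subsets of $Q$, with start state $s':=\cl(\{s\})$, accept states $F':=\{A\subset Q\mid A\cap F\neq\emptyset\}$, and transition function
\[
    \de'(A,a):=\cl\Big(\bigcup_{q\in A}\De(q,a)\Big)
    \qquad(A\subset Q,\ a\in\AL).
\]
The purpose of inserting $\cl$ both in the start state and in each transition is to make the bookkeeping of $\varep$-moves match exactly the inductive clauses defining $\hatt\De$ for the $\varep$-NFA.

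The correctness is then a single induction on $|w|$ establishing the invariant $\hatt{\de'}(s',w)=\hatt\De(s,w)$ for all $w\in\AL^*$. The base case $w=\varep$ holds since both sides equal $\cl(\{s\})$. For the step $w=ua$ with $a\in\AL$, I would compute $\hatt{\de'}(s',ua)=\de'(\hatt{\de'}(s',u),a)$ from the DFA definition, substitute the inductive hypothesis $\hatt{\de'}(s',u)=\hatt\De(s,u)$, and observe that the definition of $\de'$ reproduces verbatim the clause $\hatt\De(s,ua)=\cl\big(\bigcup_{q\in\hatt\De(s,u)}\De(q,a)\big)$. Given the invariant, membership unwinds as $w\in L(\AUT^{\det})\iff\hatt{\de'}(s',w)\in F'\iff\hatt{\de'}(s',w)\cap F\neq\emptyset\iff\hatt\De(s,w)\cap F\neq\emptyset\iff w\in L(\AUT)$, so $L(\AUT^{\det})=X$ and $X$ is regular. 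I do not expect a genuine obstacle here; the only point demanding care is that each value $\hatt\De(s,u)$ is already $\varep$-closed (being an image of $\cl$), so that applying $\cl$ once more inside $\de'$ is precisely what the recursion requires — no separate closure lemma is needed, and the two inductive definitions then line up term by term.
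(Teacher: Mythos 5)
Your proposal is correct and follows exactly the paper's argument: the trivial reinterpretation of a DFA as an $\varepsilon$-NFA in one direction, and the subset construction with the $\varepsilon$-closure $\cl$ built into the start state and transition function in the other. The only difference is that you spell out the correctness induction $\hatt{\delta'}(s',w)=\hatt\Delta(s,w)$, which the paper delegates to the cited reference \cite{sipser13}.
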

\begin{proof}
    Every DFA can be seen as an $\varep$-NFA (with no $\varep$-transitions).
    Conversely, an $\varep$-NFA $(Q,\AL,\De,s,F)$ 
    can be converted into an equivalent DFA 
    $(Q',\AL,\de',s',F')$ via the \emph{subset construction}:
    $Q'=2^Q$,
    \mbox{$\de'\colon Q'\times\AL\lra Q'\,;$} \mbox{$(A,a)\mapsto\cl(\bigcup_{q\in A}\De(q,a))$},
    $s'=\cl(\{s\})$ and
    $F'=\{A\subset Q\mid A\cap F\neq\emptyset\}$.
\end{proof}

\begin{proof}[Proof of Proposition \ref{prop:DFA:1}]
    Assume DFAs
    $(Q_1,\AL,\de_1,s_1,F_1)$ and
    $(Q_2,\AL,\de_2,s_2,F_2)$ recognize $Y$ and $Z$ respectively.
    By Proposition \ref{prop:DFA=eNFA}
    it suffices to give a DFA or an $\vep$-NFA recognizing
    (1) $Y\cap Z$,
    (2) $YZ$, 
    (3) $Y^c$,
    and
    (4) $Y^*$.

    \noindent (1)
    The DFA $(Q_1\times Q_2,\AL,\de,(s_1,s_2),F_1\times F_2)$ recognizes $Y\cap Z$,
    where $\de((q_1,q_2),a)=(\de_1(q_1,a),\de_2(q_2,a))$.

    \noindent (2)
    The $\vep$-NFA $(Q,\AL,\De,s,F)$ recognizes $YZ$, where
    $Q=Q_1\sqcup Q_2$, $s=s_1$, $F=F_2$,
    $\De(q,a)=\{\de_i(q,a)\}$ ($i=1,2$, $q\in Q_i$, $a\in\AL$) and
    $\De(q,\vep)=\{s_2\}$ if $q\in F_1$ and 
    $\De(q,\vep)=\emptyset$ if $q\in (Q_1\sm F_1)\sqcup Q_2$.

    \noindent (3)
    The DFA $(Q_1,\AL,\delta_1,s_1,Q_1\sm F_1)$ recognizes $Y^c$.

    \noindent (4)
    The $\vep$-NFA $(Q,\AL,\De,s,F)$ recognizes $Y^*$, where
    $Q=Q_1\sqcup\{s\}$,
    $F=\{s\}\sqcup F_1$,
    $\De(q,a)=\{\de_1(q,a)\}$ ($q\in Q_1$, $a\in\AL$),
    $\De(s,a)=\emptyset$ ($a\in\AL$),
    $\De(s,\vep)=\{s_1\}$,
    $\De(q,\vep)=\{s_1\}$ if $q\in F_1$ and
    $\De(q,\vep)=\emptyset$ if $q\in Q_1\sm F_1$.
\end{proof}

\begin{rema}[DFA minimization. See e.g.,\,\cite{MR1633052}*{Lecture 14}]
    \label{prop:DFA:2}
Given a DFA $M=(Q,\AL,\delta,s,F)$, 
one can compute 
$M_{\min}$ by the following algorithm.
    \begin{itemize}
        \item[1.] Remove all unreachable states.
        \item[2.] Mark all (unordered) pairs $\{q,q'\}$ with $q\in F$, $q'\in Q\sm F$.
        \item[3.] Repeat until no more changes occur: \\
            if there exists an unmarked pair $\{q,q'\}\subset Q$ 
            such that
            $\{\de(q,a),\de(q',a)\}$ is marked for some $a\in\AL$, 
            then mark $\{q,q'\}$.
        \item[4.] The relation ``$q\sim q':\Longleftrightarrow\{q,q'\}\text{ is unmarked}$''
            is then an equivalence relation.
            Writing $[q]:=\{q'\in Q\mid q\sim q'\}$,
            we have a new DFA $M_{\min}=(Q',\AL,\de',s',F')$ where
            $Q':=\{[q]\mid q\in Q\}$,
            $\de'([q],a):=[\de(q,a)]$,
            $s':=[s]$,
            $F':=\{[q]\mid q\in F\}$. 
            \qedhere
    \end{itemize}
\end{rema}

\section{Modified Murray--Miller Theorem}
\label{sect:MurrayMiller}

We review an algorithm
given in 
\cite{MR387178}*{p.1040},
\cite{MR0557013}*{Lemma 8.10}
(see also \cite{MR4072958}*{\S 3} for an exposition),
which obtain
a (nontrivial) $q$-difference equation for $F_1(x)$
from a given system of $q$-difference equations
\begin{equation}\label{eq:mqdiff}
    F_i(x) = \sum_{j=1}^{\ell} p_{ij}(x) F_j(xq^m)
    \qquad (i=1,\dots,\ell),
\end{equation}
where $p_{ij}(x)=p_{ij}(x,q)\in\bbQ(x,q)$.

\vspace{1mm}
\noindent\underline{Step 1}:
We obtain from \eqref{eq:mqdiff} another system
\begin{equation}\label{eq:mqdiff2}
    \begin{gathered}
        F'_i(x) = \sum_{j=1}^{\ell'} p'_{ij}(x) F'_j(xq^m)
        \quad (i=1,\dots,\ell'), 
    \end{gathered}
\end{equation}
where
$1\le \ell'\le\ell$,
$F'_1(x)=F_1(x)$, and
$(p'_{ij})_{i,j=1}^{\ell'}\in\mathrm{Mat}_{\l'}(\bbQ(x,q))$ is of the form 
\eqref{eq:sysqdiff:s,l} with $(s,\l)$ replaced by $(\l',\l')$.

Step 1 is done in Algorithm \ref{algo:sysqdiff:1},
which receives $(p_{ij}(x))_{i,j=1}^{\ell}$ as the input
and returns $(p'_{ij}(x))_{i,j=1}^{\ell'}$ as the output.
It
works as follows:
in the $s$-th iteration of the \textbf{for} loop in Algorithm \ref{algo:sysqdiff:1},
\begin{itemize}
    \setlength{\itemsep}{0mm}
    \item 
    in the line \ref{algo:line:assert},
    i.e.,\,at the beginning of the iteration,
it is ensured that (a) the matrix $P^{(s)}$ is defined and is of the form
\begin{equation}\label{eq:sysqdiff:s,l} 
    \text{\footnotesize
    \bordermatrix{
        & 1     & 2     &       & \cdots & s-1   & s    & \cdots & \ell \cr
        1 & \star & 1     & 0     & \cdots & 0     & 0    & \cdots & 0 \cr
        2 & \star & \star & 1     & \cdots & 0     & 0    &        & 0 \cr
    \vdots& \vdots& \vdots& \vdots& \ddots & \vdots&\vdots&        & \vdots \cr
        & \star & \star & \star & \cdots & 1     & 0    & \cdots & 0 \cr
        s-1& \star & \star & \star & \cdots & \star & 1    & \cdots & 0  \cr
        s & \star & \star & \star & \cdots & \star & \star& \cdots & \star \cr
    \vdots& \vdots& \vdots& \vdots& \vdots & \vdots&\vdots& \ddots & \vdots\cr
    \ell & \star & \star & \star & \cdots & \star & \star& \cdots & \star \cr
        },
    }
\end{equation}
    i.e.,\,$P^{(s)}_{1,2}=\cdots=P^{(s)}_{s-1,s}=1$ and
    $P^{(s)}_{i,j}=0$ if $i<s$ and $j>i+1$;
    and (b) $F'_1(x),\dots,F'_s(x)$ are (implicitly) defined 
    (we let $F'_1(x):=F_1(x)$ when $s=1$)
    and satisfy
    \begin{equation}\label{eq:algo:sysqdiff}
        \begin{aligned}
        &\transpose{\big(F'_{1}(x) , \cdots , F'_s(x) , F_{s+1}(x) , \cdots , F_\ell(x)\big)} \\
        &\quad= P^{(s)}\cdot
        \transpose{\big(F'_{1}(xq^m) , \cdots , F'_{s}(xq^m) , F_{s+1}(xq^m) , \cdots , F_\ell(xq^m)\big)}.
        \end{aligned}
    \end{equation}
    These assertions are obvious when $s=1$
    by putting $P^{(1)}:=(p_{ij}(x))_{i,j=1}^{\l}$.

    \item the \textbf{if} statement in the line 4 is always true if $s=\ell$.

    \item if the algorithm reaches the line \ref{algo:line:return},
        we can see 
        by \eqref{eq:sysqdiff:s,l} and \eqref{eq:algo:sysqdiff} that
        $F'_1(x),\dots,F'_{\ell'}(x)$ satisfy the system of $q$-difference equations
        \eqref{eq:mqdiff2} with 
        $(p'_{ij})_{i,j=1}^{\ell'}:=(P^{(s)}_{ij})_{i,j=1}^{s}$.

    \item the lines \ref{algo:line:switch} and \ref{algo:line:switch2} correspond to
        switching $F'_{s+1}(x)$ and $F'_t(x)$.
        To make the algorithm deterministic,
        one should choose the smallest $t$
        in the line \ref{algo:line:choose}, for example.

    \item in the line \ref{algo:line:P(s+1)},
        it can be checked (see \cite{MR4072958}*{Claim 3.1}) that
        $P^{(s+1)}\,(\in\mathrm{Mat}_{\l}(\bbQ(x,q)))$ 
        is again of the form \eqref{eq:sysqdiff:s,l} with $(s,\l)$ replaced by $(s+1,\l)$.
        Moreover, for
        $F'_{s+1}(x) := \sum_{j=s+1}^{\ell} P^{(s)}_{s,j}(xq^{-m}) F_j(x)$
        we can check \eqref{eq:algo:sysqdiff} with $s$ replaced by $s+1$
        (see \cite{MR4072958}*{(3.7)}).
\end{itemize}

\begin{algorithm}
\algsetup{indent=2em}    
\begin{algorithmic}[1]
    \REQUIRE $(p_{ij}(x))_{i,j=1}^{\ell}$
    \COMMENT the coefficients in \eqref{eq:mqdiff}
    \ENSURE $\ell'$, $(p'_{ij}(x))_{i,j=1}^{\ell'}$ 
    \COMMENT the coefficients in \eqref{eq:mqdiff2}
    \STATE $P^{(1)}\leftarrow (p_{ij})_{i,j=1}^{\ell}$
    \FOR{$s=1$ \TO $\ell$}
        \STATE 
        \label{algo:line:assert}
        \COMMENT{assert that $P^{(s)}$ is of the form 
            \eqref{eq:sysqdiff:s,l}
        }
        \IF{$P^{(s)}_{s,s+1}=P^{(s)}_{s,s+2}=\cdots=P^{(s)}_{s,\ell}=0$} 
            \RETURN $s$, $\big(P^{(s)}_{ij}(x)\big)_{i,j=1}^{s}$ 
            \label{algo:line:return}
        \ENDIF
        \IF{$P^{(s)}_{s,s+1}=0$}
            \STATE choose any $t$ such that $s+1<t\le\ell$ and $P^{(s)}_{s,t}\neq 0$
                \label{algo:line:choose}
            \STATE swap $s+1$-th and $t$-th rows of $P^{(s)}$
                \label{algo:line:switch}
            \STATE swap $s+1$-th and $t$-th columns of $P^{(s)}$
                \label{algo:line:switch2}
        \ENDIF
        \STATE $T_s(x) \leftarrow$
            {\footnotesize
            $\bordermatrix{
                    & 1 &        & s & s+1  &  &  & \ell \cr
                1 & 1 & \cdots & 0 & 0    & 0 & \cdots & 0 \cr
                    & \vdots & \ddots & \vdots &\vdots& \vdots & \ddots & \vdots \cr
                  s & 0 & \cdots  & 1 & 0 & 0 & \cdots & 0 \cr
                s+1 & 0      & \cdots & 0      & P^{(s)}_{s,s+1}(x) & P^{(s)}_{s,s+2}(x) & \cdots & P^{(s)}_{s,\ell}(x) \cr
                    & 0      & \cdots & 0      & 0 & 1 & \cdots & 0 \cr
                    & \vdots & \ddots & \vdots & \vdots & \vdots & \ddots & \vdots \cr
                \ell & 0      & \cdots & 0      & 0 & 0 & \cdots & 1
            }$}
            \label{algo:line:Ts}
        \STATE $P^{(s+1)} \leftarrow T_s(xq^{-m}) P^{(s)} T_s(x)^{-1}$ \label{algo:line:P(s+1)}
    \ENDFOR
\end{algorithmic}
\caption{Obtain \eqref{eq:mqdiff2} from \eqref{eq:mqdiff}
(\cite{MR0557013}*{Lemma 8.10}, see also \cite{MR4072958}*{\S 3})
}
\label{algo:sysqdiff:1}
\end{algorithm}

This completes the algorithm to obtain a new system \eqref{eq:mqdiff2}.

\vspace{1mm}
\noindent\underline{Step 2}:
Now that
the $i$-th equation (for $i=1,\cdots,\ell'-1$) in \eqref{eq:mqdiff2} is of the form
$0 = - F'_{i}(x) + F'_{i+1}(xq^m) + \sum_{j<i+1}p'_{ij}(x) F'_{j}(xq^m)$,
we can eliminate $F'_{\l'},\cdots,F'_{2}$ from the system (in this order)
to transform the final equation in \eqref{eq:mqdiff2} 
into 
a $q$-difference equation for $F'_1(x)=F_1(x)$,
which is nontrivial
(see \cite{MR0557013}*{Lemma 8.10} for more details).

\section{Proof of Proposition \ref{theo:qd}} \label{sect:qd:proof}

We apply the algorithm in Appendix \ref{sect:MurrayMiller} 
to \eqref{eq:nandi:sysqdiff}.

\subsection{The case $\cN_1$}

To find a $q$-difference equaiton for $F_{7}(x)$,
first we permute the positions of $F_0,\dots,F_5,F_7$
in \eqref{eq:nandi:sysqdiff} as follows:

{\footnotesize
\[
    \begin{pmatrix}
        F_{7}(x) \\ 
        F_{1}(x) \\ 
        F_{2}(x) \\ 
        F_{3}(x) \\ 
        F_{4}(x) \\ 
        F_{5}(x) \\ 
        F_{0}(x)
    \end{pmatrix}
    = 
    \left(\begin{array}{rrrrrrr}
        0 & x q^{2} & x^{2} q^{4} & 0 & 0 & 0 & 1 \\
        0 & x q^{2} & 0 & 0 & 0 & 1 & 0 \\
        1 & 0 & 0 & 0 & 0 & 0 & 0 \\
        0 & x q^{2} & 0 & x q & 0 & 1 & 0 \\
        1 & 0 & 0 & 0 & x q^{2} & 0 & 0 \\
        0 & x q^{2} & x^{2} q^{4} & x q & 0 & 0 & 1 \\
        0 & x q^{2} & x^{2} q^{4} & x q & x^{2} q^{2} & 0 & 1
        \end{array}\right)
    \begin{pmatrix}
        F_{7}(xq^2) \\ 
        F_{1}(xq^2) \\ 
        F_{2}(xq^2) \\ 
        F_{3}(xq^2) \\ 
        F_{4}(xq^2) \\ 
        F_{5}(xq^2) \\ 
        F_{0}(xq^2) 
    \end{pmatrix}.
\]
}

Next we apply Algorithm \ref{algo:sysqdiff:1}
(note that it is deterministic).
It stops at
the 5-th iteration of the \textbf{for} loop and we get
{\footnotesize
    \[
\begin{pmatrix}
        G_{1}(x) \\ 
        G_{2}(x) \\ 
        G_{3}(x) \\ 
        G_{4}(x) \\ 
        G_{5}(x) \\ 
        G_{6}(x) \\ 
        G_{7}(x)
    \end{pmatrix}
 = \left(\begin{array}{rrrrrrr}
    0 & 1 & 0 & 0 & 0 & 0 & 0 \\
    x^{2} & x + 1 & 1 & 0 & 0 & 0 & 0 \\
    \frac{-x^{3} + x^{2}}{q^{2}} & \frac{x}{q} & \frac{1}{q} & 1 & 0 & 0 & 0 \\
    \frac{-x^{2}}{q^{3}} & \frac{-x q^{2} + x^{2}}{q^{4}} & \frac{-q^{2} + x}{q^{4}} & \frac{-q^{2} + x}{q^{3}} & 1 & 0 & 0 \\
    \frac{-x^{3}}{q^{7}} & 0 & 0 & 0 & \frac{x}{q^{4}} & 0 & 0 \\
    0 & 1 & 0 & \frac{q}{x - 1} & \frac{q^{3}}{-x^{2} + x} & 0 & 0 \\
    0 & 1 & 0 & \frac{q}{x - 1} & \frac{q^{3}}{-x + 1} & 0 & 0
    \end{array}\right)
\begin{pmatrix}
        G_{1}(xq^2) \\ 
        G_{2}(xq^2) \\ 
        G_{3}(xq^2) \\ 
        G_{4}(xq^2) \\ 
        G_{5}(xq^2) \\ 
        G_{6}(xq^2) \\ 
        G_{7}(xq^2)
    \end{pmatrix}, \]
}

\noindent
where the middle matrix is $P^{(5)}$ in the notation of Algorithm \ref{algo:sysqdiff:1}
and each $G_i$ is a certain $\mathbb{Q}(x,q)$-linear combination of $F_j$ ($j\in\{0,\dots,5,7\})$
with $G_1=F_7$. 
The equation given in the $i$-th row ($i=1,\cdots,4$) is 
\[
0 = -G_{i}(x) + G_{i+1}(xq^2) + \sum_{j\leq i}P^{(5)}_{i,j}(x) G_{j}(xq^2), 
\]
by which 
each 
$G_{i+1}(x)$ is written in terms of 
$G_j(x)$ ($j\le i$) and $G_{i}(xq^{-2})$.
Thus we can eliminate $G_5,\dots,G_2$ 
and then the equation in the 5-th row
\[
0=
-G_5(x)-\frac{x^3}{q^{7}} G_1(xq^2)+\frac{x}{q^4} G_5(xq^2)
\]
turns into an equation 
only regarding $\{G_1(xq^{2k})\mid k\in\mathbb{Z}\}$:
{\footnotesize 
\begin{align}
    0 =&  
    - G_1(xq^ { -8 }) 
    + \frac{q^{6} + x (q^{2} + q + 1)}{q^{6}} G_1(xq^ { -6 }) 
    - \frac{x q^{8} + x^{2} (q^{5} + q^{4} + q^{3} - 1)}{q^{12}} G_1(xq^ { -4 })
    \\
    &- \frac{- x^{2} q^{4} - x^{3} (q^{5} + q^{2} + q + 1)}{q^{14}} G_1(xq^ { -2 }) 
    + \frac{x^3(x-q^2)(1+q+q^2)}{q^{13}} G_1(x) 
    \\
    &- \frac{x^3(x-1)(x-q^2)}{q^{9}} G_1(xq^ { 2 }) .
    \label{eq:qd:elim:4}
\end{align}
}
    
\noindent
By letting $x\mapsto xq^{8}$
in \eqref{eq:qd:elim:4},
we obtain \eqref{eq:qd:F} for 
$G_1(x)=F_{7}(x)=f_{\cN_1}(x,q)$.

\subsection{The case $\cN_2$}

The proof of Proposition \ref{theo:qd} for $\cN_2$ (and $\cN_3$)
proceeds almost the same: 
we start by rewriting \eqref{eq:nandi:sysqdiff} as

\[
    \footnotesize 
    \begin{pmatrix}
        F_{3}(x) \\ 
        F_{1}(x) \\ 
        F_{2}(x) \\ 
        F_{4}(x) \\ 
        F_{7}(x) \\ 
        F_{5}(x) \\ 
        F_{0}(x) \\ 
    \end{pmatrix}
    = 
    \left(\begin{array}{rrrrrrr}
        x q & x q^{2} & 0 & 0 & 0 & 1 & 0 \\
        0 & x q^{2} & 0 & 0 & 0 & 1 & 0 \\
        0 & 0 & 0 & 0 & 1 & 0 & 0 \\
        0 & 0 & 0 & x q^{2} & 1 & 0 & 0 \\
        0 & x q^{2} & x^{2} q^{4} & 0 & 0 & 0 & 1 \\
        x q & x q^{2} & x^{2} q^{4} & 0 & 0 & 0 & 1 \\
        x q & x q^{2} & x^{2} q^{4} & x^{2} q^{2} & 0 & 0 & 1
        \end{array}\right)
    \begin{pmatrix}
        F_{3}(xq^2) \\ 
        F_{1}(xq^2) \\ 
        F_{2}(xq^2) \\ 
        F_{4}(xq^2) \\ 
        F_{7}(xq^2) \\ 
        F_{5}(xq^2) \\ 
        F_{0}(xq^2) \\ 
    \end{pmatrix}.
\]
\noindent
Here we permuted the positions of $F_0,\dots,F_5,F_7$
so that
further row (and column) swapping
(in the lines \ref{algo:line:switch} and \ref{algo:line:switch2}
of Algorithm \ref{algo:sysqdiff:1}) will not happen.
Then Algorithm \ref{algo:sysqdiff:1} stops at the 5-th iteration with

{\footnotesize 
\[P^{(5)} = 
    \left(\begin{array}{rrrrrrr}
    x q & 1 & 0 & 0 & 0 & 0 & 0 \\
    x q & x + 1 & 1 & 0 & 0 & 0 & 0 \\
    0 & 0 & 0 & 1 & 0 & 0 & 0 \\
    0 & \frac{x^{2}}{q^{4}} & \frac{x^{2}}{q^{4}} & \frac{x}{q^{2}} & 1 & 0 & 0 \\
    0 & \frac{x^{2} q^{2} - x^{3}}{q^{8}} & \frac{x^{2} q^{2} - x^{3}}{q^{8}} & 0 & 0 & 0 & 0 \\
    x q & 1 & 1 & 0 & 0 & 0 & 0 \\
    x q & 1 & 1 & 1 & \frac{q^{2}}{x - 1} & 0 & 0
    \end{array}\right),
\]
}

\noindent
and by the same procedure
we obtain
{\footnotesize
\begin{align}
0 =
& - G_1(xq^ { -8 }) 
 + \frac{q^{7} + x(1+q+q^{3})}{q^{7}} G_1(xq^ { -6 })
 - \frac{ x q^{7} + x^{2} q^{2} + x^{2}}{q^{11}} G_1(xq^ { -4 }) \\
&+ \frac{x^{2} q^{4} -  x^{3} q^{2} -  x^{3}}{q^{10}} G_1(xq^ { -2 }) 
 + \frac{x^3(x-q^2)(1+q^2+q^3)}{q^{11}} G_1(x) \\
&- \frac{x^3(x-1)(x-q^2)}{q^{7}} G_1(xq^ { 2 }),
\label{eq:qd146:qd:elim:4}
\end{align}  
}

\noindent
where $G_1(x)=F_{3}(x)=f_{\cN_2}(x,q)$.
Now,
by letting $x\mapsto xq^{8}$ in
\eqref{eq:qd146:qd:elim:4}
we obtain 
\eqref{eq:qd:F} for 
$f_{\cN_2}(x,q)$.

\subsection{The case $\cN_3$}

Similarly, we start the algorithm by writing

{\footnotesize
\[
    \begin{pmatrix}
        F_{4}(x) \\ 
        F_{7}(x) \\ 
        F_{2}(x) \\ 
        F_{3}(x) \\ 
        F_{5}(x) \\ 
        F_{1}(x) \\ 
        F_{0}(x)
    \end{pmatrix}
    = 
    \left(\begin{array}{rrrrrrr}
        x q^{2} & 1 & 0 & 0 & 0 & 0 & 0 \\
        0 & 0 & x^{2} q^{4} & 0 & 0 & x q^{2} & 1 \\
        0 & 1 & 0 & 0 & 0 & 0 & 0 \\
        0 & 0 & 0 & x q & 1 & x q^{2} & 0 \\
        0 & 0 & x^{2} q^{4} & x q & 0 & x q^{2} & 1 \\
        0 & 0 & 0 & 0 & 1 & x q^{2} & 0 \\
        x^{2} q^{2} & 0 & x^{2} q^{4} & x q & 0 & x q^{2} & 1
        \end{array}\right)
    \begin{pmatrix}
        F_{4}(xq^2) \\ 
        F_{7}(xq^2) \\ 
        F_{2}(xq^2) \\ 
        F_{3}(xq^2) \\ 
        F_{5}(xq^2) \\ 
        F_{1}(xq^2) \\ 
        F_{0}(xq^2)
    \end{pmatrix}.
\]
}

\noindent
Then Algorithm \ref{algo:sysqdiff:1} stops at the 5-th iteration with
{\footnotesize
\[P^{(5)} = \left(\begin{array}{rrrrrrr}
    x q^{2} & 1 & 0 & 0 & 0 & 0 & 0 \\
    0 & 0 & 1 & 0 & 0 & 0 & 0 \\
    x^{2} q^{2} & x^{2} & 1 & 1 & 0 & 0 & 0 \\
    0 & 0 & \frac{x}{q^{2}} & \frac{x q + x}{q^{2}} & 1 & 0 & 0 \\
    0 & 0 & \frac{x q^{2} - x^{2}}{q^{5}} & \frac{x q^{2} - x^{2}}{q^{5}} & 0 & 0 & 0 \\
    0 & 0 & 0 & 0 & \frac{q}{-x^{2} + x} & 0 & 0 \\
    x^{2} q^{2} & 0 & 1 & 1 & \frac{-q}{-x + 1} & 0 & 0
    \end{array}\right). \]
}

\noindent
By the same procedure
we obtain
{\footnotesize
\begin{align}
    0 =
    &- G_1(xq^ { -8 }) 
    + \frac{q^{6} + x (q^{3} + q^{2} + 1)}{q^{6}} G_1(xq^ { -6 }) 
    - \frac{x^{2} q^{2} + x q^{3} + x^{2}}{q^{7}} G_1(xq^ { -4 }) 
    \\
    &+ \frac{x^{2} q^{4} - x^{3} q^{2} - x^{3}}{q^{10}} G_1(xq^ { -2 }) 
    + \frac{x^3(x-q^2)(1+q+q^3)}{q^{8}} G_1(x) 
    \\ 
    &- \frac{x^3(x-1)(x-q^2)}{q^{3}} G_1(xq^ { 2 }),
    \label{eq:qd256:elim:5}
\end{align}
}

\noindent
where $G_1(x)=F_{4}(x)=f_{\cN_3}(x,q)$.
Now,
by letting $x\mapsto xq^{8}$ in
\eqref{eq:qd256:elim:5}
we obtain 
\eqref{eq:qd:F} for $f_{\cN_3}(x,q)$.

\section{Minimal forbidden patterns and prefixes}
\label{sect:appe:minFP}

Let $\AL$ be a nonempty finite set.
For $B\subset\AL^*$
the language 
$\AL^*B\AL^*$ 
(resp.\,$B\AL^*$)
consists of words
matching
(resp.\,beginning with) some $w\in B$
(see Definition \ref{defi:match};
here we permit the case $n=0$),
and a language $A\subset\AL^*$ is of the form
$A=\AL^*B\AL^*$ 
(resp.\,$A=B\AL^*$)
for some $B\subset\AL^*$
if and only if 
$A=\AL^*A\AL^*$
(resp.\,$A=A\AL^*$).
In \cite{MR1638178} they gave an algorithm to find
from given
$A=\AL^*A\AL^*\subset\AL^*$
the minimum $B\subset\AL^*$ such that 
$A=\AL^*B\AL^*$.
By a slight generalization
it can also be used to find the minimum $B$ for which
$A=B\AL^*$, given $A\subset\AL^*$ such that $A=A\AL^*$
(Proposition \ref{theo:min_prefix} and \ref{theo:minimalA}).

In general,
for a poset $(P,\le)$ and a subset $A\subset P$
we write
\begin{align}
    \Base A
    = \Base_{\le} A  
    &:= \{w\in A\mid \forall v\in A, (v\le w\Lra v=w)\},
    \\
    \OF(A)
    = \OF_{\le} (A)
    &:= \{w\in P\mid \exists v\in A,\,v\le w\}.
\end{align}
Let us say a poset
$(P,\le)$ is \emph{good}
if $A\subset\OF(\Base A)$ for any $A\subset P$.

\begin{prop}\label{theo:min_prefix}
    Let $(P,\le)$ be a good poset.
    For $A=\OF(A)\,(\subset P)$
    and $B\subset P$, it holds
    $A=\OF(B)$ if and only if $\Base A\subset B\subset A$.
\end{prop}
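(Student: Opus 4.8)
The plan is to reduce everything to three elementary facts about the up-closure operator $\OF$ together with the single genuine hypothesis on $P$, namely goodness. First I would record that the standing assumption $A=\OF(A)$ says precisely that $A$ is upward closed, and that $\OF$ is monotone ($C\subset D\Lra\OF(C)\subset\OF(D)$) and extensive ($C\subset\OF(C)$, by reflexivity of $\le$). The claimed equivalence is then proved by establishing the two inclusions $\Base A\subset B$ and $B\subset A$ against the equality $A=\OF(B)$, handling each implication separately.

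For the implication $(\Leftarrow)$, I assume $\Base A\subset B\subset A$ and prove $A=\OF(B)$ by two inclusions. The inclusion $\OF(B)\subset A$ follows from $B\subset A$ by monotonicity of $\OF$ together with the hypothesis $\OF(A)=A$. The reverse inclusion $A\subset\OF(B)$ is where goodness enters: by goodness $A\subset\OF(\Base A)$, and since $\Base A\subset B$, monotonicity gives $\OF(\Base A)\subset\OF(B)$, whence $A\subset\OF(B)$.

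For the implication $(\Rightarrow)$, I assume $A=\OF(B)$. The inclusion $B\subset A$ is immediate from extensivity: $B\subset\OF(B)=A$. For $\Base A\subset B$, I would take an arbitrary $w\in\Base A$; since $w\in A=\OF(B)$ there is some $v\in B$ with $v\le w$, and as $v\in B\subset A$, the minimality of $w$ in $A$ forces $v=w$, so $w=v\in B$.

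The argument is essentially formal, so I expect no computational obstacle; the only genuinely content-bearing step is the reverse inclusion in $(\Leftarrow)$, where goodness of $(P,\le)$ is exactly what guarantees that every element of the up-set $A$ lies above one of its minimal elements and is therefore captured by $\OF(\Base A)$. The companion step in $(\Rightarrow)$ uses minimality in the opposite direction. I would take care only to distinguish $\Base A$ (the minimal elements of $A$) from $\OF(\cdot)$ (up-closure) and to invoke reflexivity explicitly when asserting $B\subset\OF(B)$.
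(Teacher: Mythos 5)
Your proof is correct and takes essentially the same approach as the paper's: the reverse implication is exactly the paper's chain $A\subset\OF(\Base A)\subset\OF(B)\subset\OF(A)=A$ (goodness plus monotonicity of $\OF$), and the forward implication likewise gets $B\subset A$ from extensivity and $\Base A\subset B$ from minimality, with your explicit note that $v\in B\subset A$ (needed to invoke minimality within $A$) matching what the paper leaves implicit.
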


\begin{proof}
    {($\Longrightarrow$)}:
    Assume $A=\OF(B)$.
    Then $B\subset A$ is obvious.
    For any $w\in A$
    we have $u\le w$ for some $u\in B$,
    and
    if $w\in \Base A$
    then $w=u$.    
    {($\Longleftarrow$)}:
    $\Base A\subset B\subset A$ implies
    $A\subset \OF(\Base A)\subset \OF(B)\subset \OF(A)=A$.
\end{proof}

Let us consider 
partial orders
$\le$ and $\le_r$ on $\AL^*$ defined by
\begin{align}
    v\le w
    &:\Llra
    \exists u\in\AL^*,\exists u'\in\AL^*,\,w=uvu',
    \\
    v\le_r w
    &:\Llra
    \exists u\in\AL^*,\,w=vu.
\end{align}
Clearly 
$\OF_{\le}(B) = \AL^*B\AL^*$ and
$\OF_{\le_r}(B) = B\AL^*$
for $B\subset\AL^*$.
It is easy to see that
$(\AL^*,\le)$ and 
$(\AL^*,\le_r)$ are good.

\begin{prop}\label{theo:minimalA}
    Let $A\subset\AL^+\,(=\AL^*\sm\{\varep\})$.

    \noindent $(1)$
    If $A=\AL^*A\AL^*\,(=\OF_{\le}(A))$ then
    $\Base_{\le} A = A\cap A^c\AL \cap \AL A^c$.

    \noindent $(2)$
    If $A=A\AL^*\,(=\OF_{\le_r}(A))$ then
    $\Base_{\le_r} A = A\cap A^c\AL$.
\end{prop}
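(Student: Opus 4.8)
The plan is to compute both minimal-generator sets straight from the definition of $\Base$, without appealing to Proposition~\ref{theo:min_prefix}. The first step is to unwind the product languages on the right-hand sides. For $w\in\AL^+$ write $w=w'b=cw''$, where $b,c\in\AL$ are the last and first letters of $w$ and $w',w''\in\AL^*$; since the alphabet consists of single letters these decompositions are unique. Reading off the definition of the language product (and using $\varep\notin A$, which holds as $A\subset\AL^+$) gives the equivalences $w\in A^c\AL\Llra w'\notin A$ and $w\in\AL A^c\Llra w''\notin A$. Hence the right-hand side of (1) is $\{w\in A\mid w'\notin A \text{ and } w''\notin A\}$ and that of (2) is $\{w\in A\mid w'\notin A\}$, and the task reduces to matching these descriptions with the minimal elements.

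For each part I would establish the two inclusions separately. The inclusion $\Base A\subset(\text{RHS})$ is the easy direction and uses only $A\subset\AL^+$: if $w\in A$ is minimal, then $w'$ and $w''$ are proper factors of $w$ (in part (1)), respectively $w'$ is a proper prefix of $w$ (in part (2)), so none of them can lie in $A$, which is exactly the defining condition of the right-hand side. The reverse inclusion $(\text{RHS})\subset\Base A$ is where the upward-closedness hypothesis enters --- namely $A=\AL^*A\AL^*$ in part (1) and $A=A\AL^*$ in part (2), each of which says that $v\in A$ together with $v\le w$ (resp.\ $v\le_r w$) forces $w\in A$. Arguing by contradiction, suppose $w$ lies in the right-hand side but is not minimal, so some $v\in A$ is a proper factor (resp.\ proper prefix) of $w$; as $v\neq w$ we have $|v|<|w|$. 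The crux is the combinatorial claim that such a $v$ already occurs inside $w'$ or inside $w''$: fixing an occurrence $w=uvu'$ with $(u,u')\neq(\varep,\varep)$, deleting the final letter of $w$ keeps the occurrence when $u'\neq\varep$ (giving $v\le w'$), while deleting the first letter keeps it when $u\neq\varep$ (giving $v\le w''$); in the prefix case $|v|<|w|$ directly gives $v\le_r w'$. Upward closure then promotes $v\in A$ to $w'\in A$ or $w''\in A$, contradicting the membership of $w$ in the right-hand side.

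The only step that is more than formal bookkeeping is this combinatorial claim for the factor order, which I expect to be the main (though modest) obstacle: one must verify that erasing a single boundary letter of $w$ preserves some chosen proper occurrence of $v$. This follows cleanly once the occurrence is recorded as a decomposition $w=uvu'$ and one erases a letter from whichever of $u,u'$ is nonempty --- and at least one is nonempty precisely because the occurrence is proper. Part (2) is the degenerate case in which only the suffix side $u'$ is ever relevant (a prefix is anchored at the start, so only the last letter can be shed), which explains why $\AL A^c$ is absent there and only $A^c\AL$ survives. I would note finally that regularity of $A$ plays no role in the proof; it is relevant only afterwards, to conclude via Proposition~\ref{prop:DFA:1} that $\Base A$ is regular and effectively computable whenever $A$ is.
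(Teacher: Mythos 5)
Your proof is correct, and it is worth noting how it relates to the paper's. For part (2) your argument is substantively identical to the paper's: the paper writes $w=a_1\cdots a_n$, observes that any strict $\le_r$-predecessor $v$ of $w$ is a prefix $a_1\cdots a_i$ with $i<n$, and uses $A=A\AL^*$ to rule out $v\in A$ (since $v\in A$ would force $a_1\cdots a_{n-1}\in A$); this is exactly your step ``$v\le_r w'$ plus upward closure promotes $v$ to $w'$,'' merely phrased contrapositively rather than by contradiction. The genuine difference is in part (1): the paper does not prove it at all, instead citing Eq.\,(2) of Crochemore--Mignosi--Restivo \cite{MR1638178} (with $A$ replaced by $A^c$), whereas you supply a self-contained proof. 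Your key combinatorial claim --- that a proper occurrence $w=uvu'$ with $(u,u')\neq(\varep,\varep)$ survives deletion of the last letter when $u'\neq\varep$ and of the first letter when $u\neq\varep$ --- is precisely the mechanism behind the cited identity, and your observation that the prefix order is the degenerate case where only the suffix side matters unifies (1) and (2) under one deletion lemma. What your route buys is self-containedness and a single argument for both parts; what the paper's buys is brevity, proving only the case it cannot outsource. All the small points check out: the reduction of $A\cap A^c\AL\cap\AL A^c$ to the conditions $w'\notin A$, $w''\notin A$ is valid by uniqueness of the first/last-letter decompositions (with $\varep\notin A$ handling length-one words), the easy inclusion uses only minimality, and regularity indeed plays no role in the statement or proof.
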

\begin{proof}
    (1) is \cite{MR1638178}*{Eq.\,(2)}
    with $A$ replaced by $A^c$.
    (2) is proved parallelly, but for completeness we duplicate a proof.
    \underline{($\subset$)}: 
    Clearly $\Base_{\le_r} A\subset A$.
    For any $w\in \Base_{\le_r} A$, since $w\neq\varep$ (otherwise we get $A=\AL^*$)
    we can write $w=w'a$ with $w'\in\AL^*$, $a\in\AL$.
    Then $w'\notin A$ by $w\in \Base_{\le_r} A$. Hence $w=w'a\in A^c\AL$.
    \underline{($\supset$)}: 
    For any $w=a_1\cdots a_n\in A^c\AL$,
    it holds $n\ge1$ and $a_1\cdots a_{n-1}\notin A$.
    For $v\in\AL^*$, 
    if $v<w$ then
    $v=a_1\cdots a_i$
    for some $0\le i < n$,
    and hence $v\notin A$
    since $A=A\AL^*$.
    Therefore $w\in\Base_{\le_r} A$ if $w\in A$.
\end{proof}

\begin{lemm}\label{theo:X'0}
    Let $A,X\subset\AL^+$
    and assume
    $\AL^*X\AL^*\subset A=A\AL^*$.
    Then
    \begin{equation}\label{eq:Xv2}
        X':=
        \Base_{\le_r}(A)\sm \AL^*X
        =
        \big(A\cap (A^c\,\AL)\big) \sm \AL^*X \quad(\subseteq \AL^+).
    \end{equation}
    is the minimum set 
    (with respect to inclusion)
    such that
    $A=\AL^*X\AL^*\cup X'\AL^*$.
\end{lemm}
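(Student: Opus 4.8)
First I would settle the displayed equality in \eqref{eq:Xv2}. The hypothesis $A=A\AL^*$ is precisely $A=\OF_{\le_r}(A)$, so Proposition \ref{theo:minimalA}(2) gives $\Base_{\le_r}(A)=A\cap A^c\AL$; removing $\AL^*X$ from both sides yields $\Base_{\le_r}(A)\sm\AL^*X=(A\cap A^c\AL)\sm\AL^*X$, which is the asserted identity. Since $\Base_{\le_r}(A)\subseteq A\subseteq\AL^+$, also $X'\subseteq\AL^+$, as claimed.

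The plan for the minimality statement is to recognise each candidate decomposition as a single $\le_r$-upward closure. Since $\AL^*X\AL^*=(\AL^*X)\AL^*=\OF_{\le_r}(\AL^*X)$, for any $Y\subseteq\AL^+$ one has
\[
    \AL^*X\AL^*\cup Y\AL^*=(\AL^*X\cup Y)\AL^*=\OF_{\le_r}(\AL^*X\cup Y),
\]
so that the equation $A=\AL^*X\AL^*\cup Y\AL^*$ is equivalent to $A=\OF_{\le_r}(\AL^*X\cup Y)$. As $A=\OF_{\le_r}(A)$ and the poset $(\AL^*,\le_r)$ is good, Proposition \ref{theo:min_prefix} applied with $B:=\AL^*X\cup Y$ rewrites this as the pair of containments
\[
    \Base_{\le_r}(A)\subseteq\AL^*X\cup Y\subseteq A.
\]

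It then remains to unpack these two containments into a condition on $Y$ alone. The inclusion $\AL^*X\cup Y\subseteq A$ reduces to $Y\subseteq A$, because $\AL^*X\subseteq\AL^*X\AL^*\subseteq A$ by hypothesis. For the other inclusion I would use the disjoint splitting $\Base_{\le_r}(A)=X'\sqcup(\Base_{\le_r}(A)\cap\AL^*X)$ with $X'=\Base_{\le_r}(A)\sm\AL^*X$: the second piece already lies in $\AL^*X$, so $\Base_{\le_r}(A)\subseteq\AL^*X\cup Y$ holds if and only if $X'\subseteq\AL^*X\cup Y$, and since $X'\cap\AL^*X=\emptyset$ this is in turn equivalent to $X'\subseteq Y$. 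Hence $Y$ satisfies $A=\AL^*X\AL^*\cup Y\AL^*$ if and only if $X'\subseteq Y\subseteq A$; taking $Y=X'$ (legitimate since $X'\subseteq A$) shows that $X'$ itself is such a set, and it is contained in every other one, i.e. it is the minimum. I expect the only real subtlety to be the packaging step together with the bookkeeping that collapses the two containments of Proposition \ref{theo:min_prefix} to $X'\subseteq Y\subseteq A$; the disjointness $X'\cap\AL^*X=\emptyset$ is exactly what lets $\Base_{\le_r}(A)\subseteq\AL^*X\cup Y$ simplify to $X'\subseteq Y$, and everything else is routine set algebra.
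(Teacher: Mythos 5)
Your proposal is correct and takes essentially the same route as the paper's proof: the displayed equality via Proposition \ref{theo:minimalA}(2), and the minimality by rewriting $\AL^*X\AL^*\cup Y\AL^*=\OF_{\le_r}(\AL^*X\cup Y)$, applying Proposition \ref{theo:min_prefix} with $B=\AL^*X\cup Y$, and collapsing the resulting containments $\Base_{\le_r}(A)\subseteq\AL^*X\cup Y\subseteq A$ to $X'\subseteq Y\subseteq A$. The paper compresses this into a single chain of equivalences (using $\AL^*X\subseteq A$ and, implicitly, the disjointness you make explicit via $X'\cap\AL^*X=\emptyset$), so your write-up is just the same argument with the set algebra spelled out.
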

\begin{proof}
    The right equality in \eqref{eq:Xv2} follows from
    Proposition \ref{theo:minimalA} (2).
    We apply 
    Proposition \ref{theo:min_prefix}:
    writing
    $A'=\AL^*X$,
    we have
    $A=A'\AL^*\cup B\AL^*
    \,(=\OF_{\le_r}(A'\cup B)) 
    \Llra \Base_{\le_r} A\subset (A'\cup B)\subset A
    \Llra (\Base_{\le_r} A)\sm A'\subset B\subset A
    $ for $B\subset\AL^*$. Thus, $X'$ is the desired one.
\end{proof}

We apply this to DFAs.
Recall Definition \ref{eq:Mv}.

\begin{prop}\label{theo:L(Mv)2}
    Let $M=(Q,\AL,\delta,s,F)$ be a DFA
    and assume $L(M)=\AL^*X\AL^*\cup X'\AL^*$
    for some $X,X'\subset\AL^*$.
    For any reachable state $v\in Q\sm F$ it holds
    $L(M_v)=\AL^*X\AL^*\cup X_v\AL^*$,
    where 
    \begin{equation}\label{eq:Xv}
        X_v:=
        \big(L(M_v)\cap (L(M_v)^c\,\AL)\big) \sm \AL^*X \quad(\subseteq \AL^+).
    \end{equation}
    Moreover, 
    $X_v$ is the minimum such set 
    (with respect to inclusion).
\end{prop}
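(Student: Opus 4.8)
The plan is to prove Proposition~\ref{theo:L(Mv)2} by reducing it to Lemma~\ref{theo:X'0}, which I may assume. The strategy has two separable pieces: first establish that $L(M_v)$ has the required structural form $L(M_v) = \AL^*X\AL^*\cup X''\AL^*$ for \emph{some} $X''$, and second invoke Lemma~\ref{theo:X'0} to identify the minimum such $X''$ with the explicit set $X_v$ of~\eqref{eq:Xv}. The second piece is essentially automatic once the first is in hand, since Lemma~\ref{theo:X'0} takes as hypotheses exactly $\AL^*X\AL^*\subset A = A\AL^*$ and returns precisely the formula $\big(A\cap(A^c\AL)\big)\sm\AL^*X$ as the minimum prefix set; matching $A := L(M_v)$ gives~\eqref{eq:Xv} verbatim.

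So the real work is to verify the two hypotheses of Lemma~\ref{theo:X'0} with $A = L(M_v)$, namely (i) $L(M_v) = L(M_v)\AL^*$ (the language is closed under appending arbitrary suffixes) and (ii) $\AL^*X\AL^* \subset L(M_v)$. For (i), since $v$ is reachable, write $v = \hatt\delta(s,w)$ for some $w\in\AL^*$. The key observation is that any word accepted from $v$ can be prepended by $w$ to give a word accepted from $s$: concretely, $u\in L(M_v)\iff \hatt\delta(v,u)\in F\iff \hatt\delta(s,wu)\in F\iff wu\in L(M)$. Because $L(M) = \AL^*X\AL^*\cup X'\AL^*$ is closed on the right (both $\AL^*X\AL^*$ and $X'\AL^*$ absorb suffixes), $wu\in L(M)$ implies $wua\in L(M)$ for every $a\in\AL$, hence $ua\in L(M_v)$; iterating gives $u\AL^*\subset L(M_v)$, which is (i). For (ii), if $u\in\AL^*X\AL^*$ then $u$ matches some word in $X$, and so does $wu$ (matching is preserved under prepending $w$), whence $wu\in\AL^*X\AL^*\subset L(M)$, giving $u\in L(M_v)$. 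Both hypotheses of Lemma~\ref{theo:X'0} are then met, and the conclusion follows.

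\textbf{The main obstacle} I anticipate is purely bookkeeping around the reachability hypothesis: one must be careful that the equivalence $u\in L(M_v)\iff wu\in L(M)$ uses the inductive definition of $\hatt\delta$ correctly (via $\hatt\delta(s,wu) = \hatt\delta(\hatt\delta(s,w),u) = \hatt\delta(v,u)$, a standard concatenation property of $\hatt\delta$ that should be stated or cited), and that the suffix-closure of $L(M)$ is genuinely inherited by $L(M_v)$ rather than assumed. There is no deep difficulty here—the proposition is, as Remark~\ref{rema:reso:Xv} hints, a direct application—so I would keep the proof short: a two-line verification of (i) and (ii) followed by a single sentence invoking Lemma~\ref{theo:X'0}. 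The only subtlety worth flagging explicitly is that $v\in Q\sm F$ guarantees $\varep\notin L(M_v)$, so that $X_v\subset\AL^+$ (not merely $\AL^*$), keeping us inside the setting where Lemma~\ref{theo:X'0} applies.
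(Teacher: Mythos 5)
Your proposal is correct and takes essentially the same approach as the paper: the paper's proof likewise uses reachability to pick $b$ with $\hatt\delta(s,b)=v$, notes the equivalence $a\in L(M_v)\Llra ba\in L(M)=\AL^*X\AL^*\cup X'\AL^*$, deduces from it that $\AL^*X\AL^*\subset L(M_v)=L(M_v)\AL^*$, and concludes by Lemma~\ref{theo:X'0}, with the same closing remark that $v\notin F$ gives $\varep\notin L(M_v)$. Your write-up merely makes explicit the suffix-absorption of $\AL^*X\AL^*\cup X'\AL^*$ and the concatenation property $\hatt\delta(s,wu)=\hatt\delta(\hatt\delta(s,w),u)$ that the paper leaves implicit.
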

\begin{proof}
    By the reachability,
    $\hatt\delta(s,b)=v$
    for some $b\in \AL^*$.
    Then 
    $a\in L(M_v)\Llra ba\in L(M)=\AL^*X\AL^*\cup X'\AL^*$
    for any $a\in\AL^*$,
    by which
    $\AL^*\FB\AL^*\subset L(\AUT_v)=L(\AUT_v)\AL^*$
    follow.
    Now the proposition follows from
    Lemma \ref{theo:X'0} 
    (note that $v\notin F$ implies
    $\varep\notin L(M_v)$).
\end{proof}

\section{Connection to linked partition ideals}
\label{sect:appe:LPI}

\subsection{On the definition of partition ideals}
\label{sect:appe:PI}

Consider a partial order $\le$ on $\Par\,(\simeq\hPar)$
defined by
$(f_i)_{i\ge 1}\le (g_i)_{i\ge 1}:\Llra\forall i\ge1,f_i\le g_i$.
In \cite{MR387178}*{Definition 1} 
a subset $\cC$ of $\Par$ is called a \emph{partition ideal} 
(\emph{PI} for short)
if it is an order ideal (\cite{MR2868112}*{p.282}) with respect to $\le$, i.e., 
\begin{equation}\label{eq:ParIdeal}
    \forall f\in\hatt\cC,
    \,\forall g\in\hPar,
    \,(g\le f
    \implies g\in\hatt\cC).
\end{equation}

For $m>0$ and $\la\in\Par$
we write
$\la_{>m} := (\la_1,\dots,\la_{\l'})$
where $\l':=\#\{i\ge 1\mid \la_i>m\}$.
In \cite{MR387178}*{Definition 7}
a PI $\cC$ is defined to have \emph{modulus} $m>0$ if
$\rsh^m(\cC) = \cC_{>m} :=\{\la\in\cC\mid\la=\la_{>m}\}$.
As we see below,
this is equivalent 
to adding an extra condition $\rsh^m(\cC)\subset\cC$
to $\lsh^m(\cC)\subset\cC$
(cf.\,\eqref{eq:mWPImodulus})
under the assumption
\begin{equation}\label{eq:PI2}
    \la\in\cC\implies \la_{>m} \in\cC.
\end{equation}

\begin{prop}\label{theo:modulus_cond}
    For any subset $\cC\subset\Par$ satisfying
    \eqref{eq:PI2},
    it holds
    $\rsh^m(\cC) = \cC_{>m}$
    if and only if
    $\rsh^m(\cC)\subset\cC$
    and
    $\lsh^m(\cC)\subset\cC$.
\end{prop}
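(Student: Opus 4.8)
The plan is to reduce everything to a few elementary identities relating $\rsh^m$, $\lsh^m$ and the truncation $\la\mapsto\la_{>m}$, all of which are transparent in multiplicity-vector coordinates. Writing $\hatt\la=(f_i)_{i\ge1}$, one has $\rsh^m(\la)=(\underbrace{0,\dots,0}_{m},f_1,f_2,\dots)$ and $\lsh^m(\la)=(f_{m+1},f_{m+2},\dots)$, from which I would record three facts: (i) $\lsh^m\circ\rsh^m=\id$ on all of $\Par$; (ii) $\rsh^m\circ\lsh^m(\la)=\la_{>m}$ for every $\la$, so in particular $\rsh^m(\lsh^m(\mu))=\mu$ whenever $\mu=\mu_{>m}$; and (iii) every element of $\rsh^m(\Par)$ has all parts $>m$, i.e.\ $\rsh^m(\la)=(\rsh^m(\la))_{>m}$. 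I would also note the one-line identity $\lsh^m(\la_{>m})=\lsh^m(\la)$, again immediate from coordinates. Setting $\cC_{>m}=\{\la\in\cC\mid\la=\la_{>m}\}$, fact (iii) gives $\rsh^m(\cC)\subseteq\{\mu\in\Par\mid\mu=\mu_{>m}\}$, so that $\rsh^m(\cC)\subseteq\cC$ is equivalent to $\rsh^m(\cC)\subseteq\cC_{>m}$.

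For the forward implication, assume $\rsh^m(\cC)=\cC_{>m}$. The inclusion $\rsh^m(\cC)=\cC_{>m}\subseteq\cC$ is immediate, giving the first required condition. For $\lsh^m(\cC)\subseteq\cC$, take $\la\in\cC$; by the hypothesis \eqref{eq:PI2} we have $\la_{>m}\in\cC$, and since $\la_{>m}$ has all parts $>m$ it lies in $\cC_{>m}=\rsh^m(\cC)$. Thus $\la_{>m}=\rsh^m(\nu)$ for some $\nu\in\cC$; applying $\lsh^m$ and using (i) together with $\lsh^m(\la_{>m})=\lsh^m(\la)$ yields $\nu=\lsh^m(\la)$, so $\lsh^m(\la)\in\cC$. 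This is the only place where \eqref{eq:PI2} enters, and it is the step I would flag as the crux: the whole point is that \eqref{eq:PI2} lets one realize $\lsh^m(\la)$ as the (unique) $\rsh^m$-preimage of the truncation $\la_{>m}$, which is what converts membership of $\la_{>m}$ into membership of $\lsh^m(\la)$.

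For the converse, assume $\rsh^m(\cC)\subseteq\cC$ and $\lsh^m(\cC)\subseteq\cC$. For $\rsh^m(\cC)\subseteq\cC_{>m}$, any $\mu\in\rsh^m(\cC)$ lies in $\cC$ by the first hypothesis and satisfies $\mu=\mu_{>m}$ by (iii), hence $\mu\in\cC_{>m}$. Conversely, given $\mu\in\cC_{>m}$, the second hypothesis gives $\lsh^m(\mu)\in\cC$, and since $\mu=\mu_{>m}$, fact (ii) gives $\mu=\rsh^m(\lsh^m(\mu))\in\rsh^m(\cC)$. This yields $\rsh^m(\cC)=\cC_{>m}$ and completes the argument. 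Apart from the bookkeeping above, no genuine difficulty arises; the content is entirely in the observation that $\rsh^m$ and $\lsh^m$ restrict to mutually inverse bijections between $\Par$ and the set of partitions all of whose parts exceed $m$.
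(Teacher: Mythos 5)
Your proof is correct and takes essentially the same route as the paper's: both hinge on the identities $\lsh^m\circ\rsh^m=\id$ and $\rsh^m\lsh^m(\la)=\la_{>m}$, invoke \eqref{eq:PI2} at exactly the same point (to place $\la_{>m}$ in $\cC_{>m}$), and prove $\cC_{>m}\subset\rsh^m(\cC)$ in the converse direction by the same computation $\mu=\rsh^m\lsh^m(\mu)$ for $\mu\in\cC_{>m}$. Your element-wise argument in the forward direction (solving $\la_{>m}=\rsh^m(\nu)$ for $\nu=\lsh^m(\la)$) is just an unpacked version of the paper's set-level cancellation using the injectivity of $\rsh^m$.
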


\begin{proof}
    \noindent\underline{($\Rightarrow$)}:
    Assume
    $\rsh^m(\cC) = \cC_{>m}$.
    Then
    obviously
    $\rsh^m(\cC)\subset\cC$.
    Since $\la_{>m}=\rsh^m\lsh^m(\la)$ for any $\la\in\Par$,
    \eqref{eq:PI2} implies
    $\rsh^m\lsh^m(\cC)\subset\cC_{>m}$ ($=\rsh^m(\cC)$),
    and hence
    $\lsh^m(\cC)\subset\cC$
    since $\rsh$ is injective.
    \underline{($\Leftarrow$)}:
    Assume
    $\rsh^m(\cC)\subset\cC$
    and
    $\lsh^m(\cC)\subset\cC$.
    Then obviously
    $\rsh^m(\cC)\subset\cC_{>m}$.
    Since $\lsh^m(\cC)\subset\cC$ we have
    $\rsh^m\lsh^m(\cC_{>m})\subset\rsh^m(\cC)$,
    and
    $\cC_{>m}=\rsh^m\lsh^m(\cC_{>m})$
    since $\rsh^m\lsh^m$ is identical on $\Par_{>m}$.
    Hence $\cC_{>m}\subset\rsh^m(\cC)$.
\end{proof}

\begin{coro}
    A PI having modulus $m$ satisfies \eqref{eq:mWPImodulus}.
\end{coro}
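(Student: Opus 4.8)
The plan is to reduce everything to Proposition \ref{theo:modulus_cond}, whose standing hypothesis is exactly condition \eqref{eq:PI2}, together with the order-ideal property \eqref{eq:ParIdeal} defining a PI. First I would record the purely combinatorial observation that, for any $\la\in\Par$ and any $m>0$, both truncations satisfy $\widehat{\la_{>m}}\le\widehat{\la}$ and $\widehat{\la_{\le m}}\le\widehat{\la}$ in $\hPar$. Indeed, writing $f=\widehat{\la}=(f_i)_{i\ge1}$, the vector $\widehat{\la_{\le m}}$ is obtained from $f$ by zeroing out every coordinate $f_i$ with $i>m$, and $\widehat{\la_{>m}}$ by zeroing out every coordinate with $i\le m$; in both cases each coordinate can only decrease, so both lie below $\widehat{\la}$ in the componentwise order.

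Given this, since a PI $\cC$ is an order ideal with respect to $\le$ (i.e.\ satisfies \eqref{eq:ParIdeal}), the relations $\widehat{\la_{>m}}\le\widehat{\la}$ and $\widehat{\la_{\le m}}\le\widehat{\la}$ immediately yield, for every $\la\in\cC$, that $\la_{>m}\in\cC$ and $\la_{\le m}\in\cC$. The first of these is precisely condition \eqref{eq:PI2}, and the second is exactly the left half of \eqref{eq:mWPImodulus}, namely $\la\in\cC\implies\la_{\le m}\in\cC$.

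It then remains to produce the right half of \eqref{eq:mWPImodulus}, namely $\lsh^m(\cC)\subset\cC$. Here I would invoke the hypothesis that the PI $\cC$ has modulus $m$, i.e.\ $\rsh^m(\cC)=\cC_{>m}$. Since $\cC$ satisfies \eqref{eq:PI2} by the previous paragraph, Proposition \ref{theo:modulus_cond} applies and gives the equivalence of $\rsh^m(\cC)=\cC_{>m}$ with the conjunction of $\rsh^m(\cC)\subset\cC$ and $\lsh^m(\cC)\subset\cC$. In particular $\lsh^m(\cC)\subset\cC$ follows, which completes the verification of \eqref{eq:mWPImodulus}.

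The argument is essentially bookkeeping, so I do not anticipate a serious obstacle; the only point requiring care is to confirm that \eqref{eq:PI2} genuinely holds \emph{before} appealing to Proposition \ref{theo:modulus_cond}, since that proposition takes \eqref{eq:PI2} as a standing assumption. This is exactly what the order-ideal property supplies via the observation $\widehat{\la_{>m}}\le\widehat{\la}$.
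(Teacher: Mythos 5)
Your proof is correct and follows essentially the same route as the paper: the order-ideal property \eqref{eq:ParIdeal} gives both \eqref{eq:PI2} and the left half of \eqref{eq:mWPImodulus} (since $\widehat{\la_{>m}}\le\widehat{\la}$ and $\widehat{\la_{\le m}}\le\widehat{\la}$ componentwise), after which Proposition \ref{theo:modulus_cond} yields $\lsh^m(\cC)\subset\cC$. The paper's proof is just a terser statement of exactly this argument, so no further comment is needed.
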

\begin{proof}
    Since a PI satisfies \eqref{eq:PI2}
    we can apply Proposition \ref{theo:modulus_cond}.
    The left condition in \eqref{eq:mWPImodulus} is obvious
    from \eqref{eq:ParIdeal}.
\end{proof}

\subsection{Linked partition ideals}\label{sect:lpi:lpi}
Recall Definition \ref{defi:pidata}.

\begin{defi}[\cite{MR387178}*{Definition 11}]\label{defi:lpi}
    A subset $\cC$ of $\Par$ is a 
    \emph{linked partition ideal} 
    (\emph{LPI} for short)
    if there exists $m\in\bbZ_{>0}$ for which
    \begin{enumerate}
        \setlength{\itemsep}{0mm}
        \item[(L1)] $\cC$ is a PI having modulus $m$;
        \item[(L2)] $|\cC_{\le m}|<\infty$;
        \item[(L3)]
            there exist
            $L\colon \cC_{\le m}\lra 2^{\cC_{\le m}}$
            and
            $\spa\colon \cC_{\le m}\lra\bbZ_{>0}$
            such that
            $\id^\pst_{\cC_{\le m}} (S) = \cC$,
            where $S$ is the set of
            $(\la^{(i)})_{i\ge1}\in \Seq(\cC_{\le m},\id_{\cC_{\le m}})$ with
            \begin{equation}\label{eq:lpi_orig}
                \forall j\ge 1,\,
                \la^{(j+1)}=\cdots\la^{(j+\spa(\la^{(j)})-1)} = \emptypar
                \text{ and }
                \la^{(j+\spa(\la^{(j)}))}\in L(\la^{(j)}).
            \end{equation}
    \end{enumerate}
\end{defi}

\begin{prop}\label{exam:LPI}
    An LPI $\cC$
    is regularly linked
    (see Definition \ref{defi:regularWPI}).
\end{prop}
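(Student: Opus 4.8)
The plan is to exhibit the required $5$-tuple $(m,I,\pi,X,X')$ of Definition~\ref{defi:regularWPI} directly from the linked-partition-ideal data. First I would take the modulus $m$ from Definition~\ref{defi:lpi}, set $I:=\cC_{\le m}$, and let $\pi\colon I\lra\Par_{\le m}$ be the inclusion. Here $I$ is finite by (L2) and nonempty because $\cC$, being a partition ideal, contains its minimum $\emptypar$, which lies in $\cC_{\le m}$; and $\pi$ is injective, so $\pi^\pst$ is injective as noted in Definition~\ref{defi:regularWPI}. By (L1) together with Proposition~\ref{theo:modulus_cond}, $\cC$ satisfies \eqref{eq:mWPImodulus}, so the block decomposition of Lemma~\ref{theo:lpi:block} is available; in particular (L3) asserts exactly that $\pi^\pst(S)=\cC$, where $S\subset\Seq(I,\pi)$ is the set of sequences $(\la^{(i)})_{i\ge1}$ obeying \eqref{eq:lpi_orig}. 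Since $\pi^\pst$ is injective, it then suffices to produce regular languages $X,X'\subset I^+$ with $S=\Avoid{\Seq(I,\pi)}{X}{X'}$, for then $\pi^\pst(\Avoid{\Seq(I,\pi)}{X}{X'})=\pi^\pst(S)=\cC$.

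The main step is to rewrite the \emph{positive} linking condition \eqref{eq:lpi_orig} as the \emph{avoidance} of finitely many forbidden patterns, with $X'=\emptyset$. The key observation is that \eqref{eq:lpi_orig} is a conjunction over $j\ge1$ of a single local condition inspecting only the window $(\la^{(j)},\dots,\la^{(j+\spa(\la^{(j)}))})$, whose length $\spa(\la^{(j)})+1$ is uniformly bounded because $I$ is finite. Accordingly I would let $X$ be the finite set of all \emph{bad windows}
\[
    (a_0,a_1,\dots,a_{\spa(a_0)})\in I^+
\]
with $a_0\in I$ for which it is \emph{not} the case that $a_1=\dots=a_{\spa(a_0)-1}=\emptypar$ and $a_{\spa(a_0)}\in L(a_0)$. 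Because the length $\spa(a_0)+1$ of a word in $X$ is pinned down by its leading letter $a_0$, a sequence $(\la^{(i)})$ matches a word $w=(a_0,\dots,a_{\spa(a_0)})\in X$ precisely when, at some $j$ with $\la^{(j)}=a_0$, the genuine window $(\la^{(j)},\dots,\la^{(j+\spa(\la^{(j)}))})$ equals $w$, i.e.\ precisely when the window at $j$ is bad. Hence avoiding every $w\in X$ is exactly the requirement that \eqref{eq:lpi_orig} holds for all $j$, giving $S=\Avoid{\Seq(I,\pi)}{X}{\emptyset}$. Finally $X$, being finite, is regular: each singleton word is a product of the regular singletons of the example preceding Proposition~\ref{prop:DFA:1}, and finite products and unions of regular languages are regular by Proposition~\ref{prop:DFA:1}. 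This verifies Definition~\ref{defi:regularWPI}.

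The point requiring the most care—the main obstacle—is the faithful translation of the \emph{must occur} shape of \eqref{eq:lpi_orig} into a \emph{must not occur} family, together with the behaviour on the all-empty tail of a sequence in $\Seq(I,\pi)$. I would argue that the tail causes no trouble: since $\cC$ is a partition ideal it contains $\emptypar$, so by injectivity of $\pi^\pst$ the all-empty sequence is the unique preimage of $\emptypar$ and therefore already lies in $S$; consequently the local condition of \eqref{eq:lpi_orig} at positions $j$ deep in the tail (where $\la^{(j)}=\emptypar$) is automatically consistent and contributes no spurious forbidden window. Granting this consistency, the length-indexing argument above shows the window-matching is exact, so the positive condition is captured verbatim by avoidance of $X$ and no forbidden-prefix language $X'$ is needed.
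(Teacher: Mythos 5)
Your construction is essentially identical to the paper's own proof: it likewise takes $I=\cC_{\le m}$, $\pi=\id_{\cC_{\le m}}$, $X'=\emptyset$, and the same finite forbidden-pattern language $X=\bigcup_{\la\in I}\{\la\}\bigl(I^{\spa(\la)}\sm\{\emptypar\cdots\emptypar\}L(\la)\bigr)$ of ``bad windows,'' concluding regularity from finiteness; your window-alignment argument (the length of a word in $X$ being determined by its leading letter) and your treatment of the all-$\emptypar$ tail merely make explicit what the paper leaves implicit in asserting $S=\Avoid{\Seq(I,\pi)}{X}{\emptyset}$. The one discrepancy is the degenerate case $\cC=\emptyset$: your claim that a partition ideal contains $\emptypar$ fails there (an order ideal may be empty), and the paper handles this case separately by choosing $X=I$, so that $\Avoid{\Seq(I,\pi)}{X}{X'}=\emptyset$; with that one-line patch your proof coincides with the paper's.
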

\begin{proof}
    If $\cC=\emptyset$ then we can take
    $m,I,\pi,X'$ arbitrarily and $X=I$
    in Definition \ref{defi:regularWPI}.
    Assume $\cC\neq\emptyset$
    and let $m$ be as in Definition \ref{defi:lpi}.
    Since $\cC$ is a PI,
    we have $\emptypar\in\cC$
    and in particular $\cC_{\le m}\neq\emptyset$.
    Write $I:=\cC_{\le m}$ and $\pi:=\id_{\cC_{\le m}}$.
    Then the set $S\subset\Seq(I,\pi)$ in (L3)
    can be written as
    \[
        \Avoid{\Seq(I,\pi)}{X}{\emptyset}
        = S,
    \]
    where 
    \[
        X := \bigcup_{\la\in I} 
        \{\la\} \Big(I^{\spa(\la)} \sm \big(
            \{\underbrace{\emptypar\cdots\emptypar}_{\spa(\la)-1}\} L(\la) 
        \big) \Big)
        \quad(\subset I^+),
    \]
    which is finite and hence is a regular language over $I$
    (recall \eqref{eq:reg_op}).
\end{proof}

\subsection*{Acknowledgments}
We thank S.~Kanade and M.~Russell for helpful
discussions.
M.T.\,was supported by Start-up research support from Okayama University.
S.T.\,was supported by JSPS KAKENHI Grant 17K14154, 20K03506
and by Leading Initiative for Excellent Young Researchers, MEXT, Japan.

\bibliographystyle{abbrv}

\begin{bibdiv}
\begin{biblist}

\bib{MR225741}{article}{
      author={Andrews, George~E.},
       title={On partition functions related to {S}chur's second partition
  theorem},
        date={1968},
        ISSN={0002-9939},
     journal={Proc. Amer. Math. Soc.},
      volume={19},
       pages={441\ndash 444},
         url={https://doi.org/10.2307/2035546},
}

\bib{MR351985}{article}{
      author={Andrews, George~E.},
       title={An analytic generalization of the {R}ogers-{R}amanujan identities
  for odd moduli},
        date={1974},
        ISSN={0027-8424},
     journal={Proc. Nat. Acad. Sci. U.S.A.},
      volume={71},
       pages={4082\ndash 4085},
         url={https://doi.org/10.1073/pnas.71.10.4082},
}

\bib{MR387178}{article}{
      author={Andrews, George~E.},
       title={A general theory of identities of the {R}ogers-{R}amanujan type},
        date={1974},
        ISSN={0002-9904},
     journal={Bull. Amer. Math. Soc.},
      volume={80},
       pages={1033\ndash 1052},
         url={https://doi.org/10.1090/S0002-9904-1974-13616-5},
}

\bib{MR0557013}{book}{
      author={Andrews, George~E.},
       title={The theory of partitions},
   publisher={Addison-Wesley Publishing Co., Reading, Mass.-London-Amsterdam},
        date={1976},
        note={Encyclopedia of Mathematics and its Applications, Vol. 2},
}

\bib{MR1284057}{incollection}{
      author={Andrews, George~E.},
       title={Schur's theorem, {C}apparelli's conjecture and {$q$}-trinomial
  coefficients},
        date={1994},
   booktitle={The {R}ademacher legacy to mathematics ({U}niversity {P}ark,
  {PA}, 1992)},
      series={Contemp. Math.},
      volume={166},
   publisher={Amer. Math. Soc., Providence, RI},
       pages={141\ndash 154},
         url={https://doi.org/10.1090/conm/166/01622},
}

\bib{MR541344}{article}{
      author={Bressoud, David~M.},
       title={A generalization of the {R}ogers-{R}amanujan identities for all
  moduli},
        date={1979},
        ISSN={0097-3165},
     journal={J. Combin. Theory Ser. A},
      volume={27},
      number={1},
       pages={64\ndash 68},
         url={https://doi.org/10.1016/0097-3165(79)90008-6},
}

\bib{MR556608}{article}{
      author={Bressoud, David~M.},
       title={Analytic and combinatorial generalizations of the
  {R}ogers-{R}amanujan identities},
        date={1980},
        ISSN={0065-9266},
     journal={Mem. Amer. Math. Soc.},
      volume={24},
      number={227},
       pages={54},
         url={https://doi.org/10.1090/memo/0227},
}

\bib{1809.06089}{article}{
      author={Bringmann, Kathrin},
      author={Jennings-Shaffer, Chris},
      author={Mahlburg, Karl},
       title={Proofs and reductions of various conjectured partition identities
  of {K}anade and {R}ussell},
     journal={J. Reine Angew. Math.},
        note={to appear},
}

\bib{MR2637254}{book}{
      author={Capparelli, Stefano},
       title={Vertex operator relations for affine algebras and combinatorial
  identities},
   publisher={ProQuest LLC, Ann Arbor, MI},
        date={1988},
         url={http://gateway.proquest.com/openurl?url_ver=Z39.88-2004&
  rft_val_fmt=info:ofi/fmt:kev:mtx:dissertation&res_dat=xri:
  pqdiss&rft_dat=xri:pqdiss:8914197},
        note={Thesis (Ph.D.)--Rutgers The State University of New Jersey - New
  Brunswick},
}

\bib{MR1333389}{article}{
      author={Capparelli, Stefano},
       title={A construction of the level {$3$} modules for the affine {L}ie
  algebra {$A^{(2)}_2$} and a new combinatorial identity of the
  {R}ogers-{R}amanujan type},
        date={1996},
        ISSN={0002-9947},
     journal={Trans. Amer. Math. Soc.},
      volume={348},
      number={2},
       pages={481\ndash 501},
         url={https://doi.org/10.1090/S0002-9947-96-01535-8},
}

\bib{MR4072958}{article}{
      author={Chern, Shane},
      author={Li, Zhitai},
       title={Linked partition ideals and {K}anade-{R}ussell conjectures},
        date={2020},
        ISSN={0012-365X},
     journal={Discrete Math.},
      volume={343},
      number={7},
       pages={111876, 24},
         url={https://doi.org/10.1016/j.disc.2020.111876},
}

\bib{MR1638178}{article}{
      author={Crochemore, M.},
      author={Mignosi, F.},
      author={Restivo, A.},
       title={Automata and forbidden words},
        date={1998},
        ISSN={0020-0190},
     journal={Inform. Process. Lett.},
      volume={67},
      number={3},
       pages={111\ndash 117},
         url={https://doi.org/10.1016/S0020-0190(98)00104-5},
}

\bib{GAP_Automata1.14}{misc}{
      author={Delgado, M.},
      author={Linton, S.},
      author={Morais, J.~J.},
       title={{Automata}, a package on automata, {V}ersion 1.14},
         how={\href {https://gap-packages.github.io/automata/}
  {\texttt{https://gap-packages.github.io/}\discretionary
  {}{}{}\texttt{automata/}}},
        date={2018},
        note={Refereed GAP package},
}

\bib{MR0089895}{book}{
      author={Dienes, P.},
       title={The {T}aylor series: an introduction to the theory of functions
  of a complex variable},
   publisher={Dover Publications, Inc., New York},
        date={1957},
}

\bib{MR2128719}{book}{
      author={Gasper, George},
      author={Rahman, Mizan},
       title={Basic hypergeometric series},
     edition={Second},
      series={Encyclopedia of Mathematics and its Applications},
   publisher={Cambridge University Press, Cambridge},
        date={2004},
      volume={96},
        ISBN={0-521-83357-4},
         url={https://doi.org/10.1017/CBO9780511526251},
        note={With a foreword by Richard Askey},
}

\bib{MR123484}{article}{
      author={Gordon, Basil},
       title={A combinatorial generalization of the {R}ogers-{R}amanujan
  identities},
        date={1961},
        ISSN={0002-9327},
     journal={Amer. J. Math.},
      volume={83},
       pages={393\ndash 399},
         url={https://doi.org/10.2307/2372962},
}

\bib{GAP4}{manual}{
      author={Group, The~GAP},
       title={{GAP -- Groups, Algorithms, and Programming, Version 4.11.0}},
        date={2020},
         url={\url{https://www.gap-system.org}},
}

\bib{MR1633052}{book}{
      author={Kozen, Dexter~C.},
       title={Automata and computability},
      series={Undergraduate Texts in Computer Science},
   publisher={Springer-Verlag, New York},
        date={1997},
        ISBN={0-387-94907-0},
         url={https://doi.org/10.1007/978-1-4612-1844-9},
}

\bib{MR501091}{article}{
      author={Lepowsky, J.},
      author={Milne, S.},
       title={Lie algebraic approaches to classical partition identities},
        date={1978},
        ISSN={0001-8708},
     journal={Adv. in Math.},
      volume={29},
      number={1},
       pages={15\ndash 59},
         url={https://doi.org/10.1016/0001-8708(78)90004-X},
}

\bib{MR638674}{article}{
      author={Lepowsky, James},
      author={Wilson, Robert~Lee},
       title={A new family of algebras underlying the {R}ogers-{R}amanujan
  identities and generalizations},
        date={1981},
        ISSN={0027-8424},
     journal={Proc. Nat. Acad. Sci. U.S.A.},
      volume={78},
      number={12, part 1},
       pages={7254\ndash 7258},
         url={https://doi.org/10.1073/pnas.78.12.7254},
}

\bib{MR663415}{article}{
      author={Lepowsky, James},
      author={Wilson, Robert~Lee},
       title={A {L}ie theoretic interpretation and proof of the
  {R}ogers-{R}amanujan identities},
        date={1982},
        ISSN={0001-8708},
     journal={Adv. in Math.},
      volume={45},
      number={1},
       pages={21\ndash 72},
         url={https://doi.org/10.1016/S0001-8708(82)80012-1},
}

\bib{MR752821}{article}{
      author={Lepowsky, James},
      author={Wilson, Robert~Lee},
       title={The structure of standard modules. {I}. {U}niversal algebras and
  the {R}ogers-{R}amanujan identities},
        date={1984},
        ISSN={0020-9910},
     journal={Invent. Math.},
      volume={77},
      number={2},
       pages={199\ndash 290},
         url={https://doi.org/10.1007/BF01388447},
}

\bib{MR782227}{article}{
      author={Lepowsky, James},
      author={Wilson, Robert~Lee},
       title={The structure of standard modules. {II}. {T}he case
  {$A^{(1)}_1$}, principal gradation},
        date={1985},
        ISSN={0020-9910},
     journal={Invent. Math.},
      volume={79},
      number={3},
       pages={417\ndash 442},
         url={https://doi.org/10.1007/BF01388515},
}

\bib{MR888628}{article}{
      author={Meurman, A.},
      author={Primc, M.},
       title={Annihilating ideals of standard modules of {${\rm sl}(2,{\bf
  C})^\sim$} and combinatorial identities},
        date={1987},
        ISSN={0001-8708},
     journal={Adv. in Math.},
      volume={64},
      number={3},
       pages={177\ndash 240},
         url={https://doi.org/10.1016/0001-8708(87)90008-9},
}

\bib{NandiPhD}{thesis}{
      author={Nandi, Debajyoti},
       title={Partition identities arising from the standard
  {$A^{(2)}_2$}-modules of level 4},
        type={Ph.D. Thesis},
        date={2014},
}

\bib{1912.03689}{unpublished}{
      author={Rosengren, Hjalmar},
       title={Proofs of some partition identities conjectured by {K}anade and
  {R}ussell},
        note={arXiv:1912.03689},
}

\bib{MR3773946}{incollection}{
      author={Sills, Andrew~V.},
       title={A classical {$q$}-hypergeometric approach to the {$A^{(2)}_2$}
  standard modules},
        date={2017},
   booktitle={Analytic number theory, modular forms and {$q$}-hypergeometric
  series},
      series={Springer Proc. Math. Stat.},
      volume={221},
   publisher={Springer, Cham},
       pages={713\ndash 731},
         url={https://doi.org/10.1007/978-3-319-68376-8_39},
}

\bib{MR3752624}{book}{
      author={Sills, Andrew~V.},
       title={An invitation to the {R}ogers-{R}amanujan identities},
   publisher={CRC Press, Boca Raton, FL},
        date={2018},
        ISBN={978-1-4987-4525-3},
        note={With a foreword by George E. Andrews},
}

\bib{sipser13}{book}{
      author={Sipser, Michael},
       title={Introduction to the theory of computation},
     edition={3},
   publisher={Course Technology},
     address={Boston, MA},
        date={2013},
        ISBN={113318779X},
}

\bib{MR0049225}{article}{
      author={Slater, L.~J.},
       title={Further identities of the {R}ogers-{R}amanujan type},
        date={1952},
        ISSN={0024-6115},
     journal={Proc. London Math. Soc. (2)},
      volume={54},
       pages={147\ndash 167},
         url={https://doi.org/10.1112/plms/s2-54.2.147},
}

\bib{MR2868112}{book}{
      author={Stanley, Richard~P.},
       title={Enumerative combinatorics. {V}olume 1},
     edition={Second},
      series={Cambridge Studies in Advanced Mathematics},
   publisher={Cambridge University Press, Cambridge},
        date={2012},
      volume={49},
        ISBN={978-1-107-60262-5},
}

\bib{2006.02630}{unpublished}{
      author={Takigiku, Motoki},
      author={Tsuchioka, Shunsuke},
       title={Andrews-{G}ordon type series for the level 5 and 7 standard
  modules of the affine {L}ie algebra ${A}^{(2)}_2$},
        note={arXiv:2006.02630},
}

\bib{MR1364151}{article}{
      author={Tamba, Manvendra},
      author={Xie, Chuan~Fu},
       title={Level three standard modules for {$A^{(2)}_2$} and combinatorial
  identities},
        date={1995},
        ISSN={0022-4049},
     journal={J. Pure Appl. Algebra},
      volume={105},
      number={1},
       pages={53\ndash 92},
         url={https://doi.org/10.1016/0022-4049(94)00127-8},
}

\end{biblist}
\end{bibdiv}

\end{document}